\theoremstyle{plain}
\newtheorem{thm}{Theorem}[section]
\newtheorem{prop}{Proposition}[section]
\newtheorem{lem}[thm]{Lemma}
\theoremstyle{definition}
\newtheorem{defn}{Definition}[section]
\newtheorem{exmp}{Example}[section]
\theoremstyle{remark}
\newtheorem{rem}{Remark}[section]
\def\N{\mathbb{N}}
\def\C{\mathbb{C}}
\def\R{\mathbb{R}}
\def\Z{\mathbb{Z}}
\def\T{\mathfrak{X}}
\def\Op{\text{\upshape{Op}}}
\def\Opd{\text{\upshape{OP}}}
\def\S{S}
\def\Sc{S_{sc}}
\def\eps{\epsilon}
\numberwithin{equation}{section}
\begin{document}
\title{Pseudo-differential calculus on homogeneous trees}
\author{Etienne Le~Masson}
\address{Universit\'e Paris-Sud 11, Math\'ematiques, B\^at. 425, 91405 ORSAY CEDEX, FRANCE}
\email{etienne.lemasson@math.u-psud.fr}
\date{}

\begin{abstract}
In the objective of studying concentration and oscillation properties of eigenfunctions of the discrete Laplacian on regular graphs, we construct a pseudo-differential calculus on homogeneous trees, their universal covers. We define symbol classes and associated operators. We prove that these operators are bounded on $L^2$ and give adjoint and product formulas. Finally we compute the symbol of the commutator of a pseudo-differential operator with the Laplacian. 
\end{abstract}

\maketitle

\section{Introduction}
The study of the Laplacian on graphs has attracted much attention in the field of quantum chaos since the work of Kottos and Smilansky (\cite{KS97,KS99}). Most of the interest has been on metric graphs, specifically at the spectrum of the Laplacian in this context, and to a lesser extent at eigenfunctions (\cite{KMW03,BKW04,BKS07}). Recently (\cite{Smi10,Smi07}), the discrete Laplacian on regular graphs has been suggested as a good simplified model for quantum chaos.

The aim of this paper is to provide a tool to study concentration and oscillation properties of eigenfunctions of the discrete Laplacian on regular graphs. It is known that on compact hyperbolic surfaces (and more generally on compact Riemannian manifolds with strictly negative curvature) the eigenfunctions of the Laplacian are associated with probability densities, most of which tend to a uniform distribution in the high-frequency limit. This result called quantum ergodicity is proved with the help of pseudo-differential calculus on manifolds, and the proof uses the ergodicity of the geodesic flow (\cite{Sni74,Zel87,CdV85}). A recent result (\cite{BL}) suggests a similar behaviour should exist for eigenfunctions on finite regular graphs when the high-frequency limit is replaced by a large spatial scale limit (that is when the size of the graphs tends to infinity). 
But one of the difficulties preventing from going further is that, even though ergodic dynamics can be defined on these graphs, to our knowledge no pseudo-differential calculus is available in this context.
We therefore construct in this article a pseudo-differential calculus on the universal covers of regular graphs (that is regular trees). It is analogous to the one constructed by Zelditch in \cite{Zel86} for hyperbolic surfaces, in the sense that it is based on the Fourier-Helgason transform and thus is adapted to the geometry.

The outline of the paper is as follows. In section \ref{s:harmonic} we recall some necessary elements of harmonic analysis on homogeneous trees. In particular we define the Fourier-Helgason transform and give some of its properties. 

In section \ref{s:operators} we define operators $\Op(a)$ on the $q+1$-regular tree $\T$, associated with functions $a(x,\omega,s)$, where $x\in \T$, $\omega \in \Omega$ is an element of the boundary of $\T$, and $s \in [0,\pi/\log q] \subset \R$ is a spectral parameter (see section \ref{s:harmonic} for definitions). We introduce a general symbol class $\S$ and a subset $\Sc$ of \emph{semi-classical} symbols depending on a small parameter $\epsilon > 0$ controlling the variation of the symbol on the tree. We also prove in this section that $S$ and $\Sc$ are algebras, and closed under the action of two operators related to the dynamics of the tree (the shift and the transfer operator associated, see definition \ref{shift operator}). Using these properties, we then show that $\Sc$ contains nontrivial examples of symbols.

In section \ref{s:decay} we adapt an existing result of \cite{CS99} on the Fourier-Helgason transform on trees to show that the kernels of the operators we defined have a rapid decay property away from the diagonal. 

This fact will be essential to prove that our pseudo-differential operators are bounded as operators from $L^2(\T)$ to $L^2(\T)$, which is done in section \ref{s:continuity}. The theorem can be stated in the following way

\begin{thm}
 Let $a = a(x,\omega,s) \in \S$ be a symbol, then the pseudo-differential operator $\Op(a)$ can be extended as a bounded operator from $L^2(\T)$ to $L^2(\T)$ and we have the inequality
\[ \|\Op(a) \|_{L^2 \rightarrow L^2} \leq C \left( \|a\|_\Omega + \sum_{k=0}^4 \|\partial_s^k a \|_\infty \right), \]
where $\|a\|_\Omega$ is a norm associated with the regularity of $a$ in $\omega$.
\end{thm}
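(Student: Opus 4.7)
The natural strategy is to establish a pointwise bound on the Schwartz kernel of $\Op(a)$ and then apply the Schur test. First, I would write $\Op(a)$ as an integral operator
\[
\Op(a) f(x) = \sum_{y \in \T} K_a(x,y) f(y),
\]
by expanding $f$ via the Fourier--Helgason inversion formula of Section~\ref{s:harmonic} and exchanging the order of summation and integration (justified for $f$ in a suitable dense class). The kernel $K_a(x,y)$ then takes the form of an integral over $\Omega \times [0,\pi/\log q]$ of $a(x,\omega,s)$ against the product of two Fourier--Helgason plane waves and the Plancherel density $|c(s)|^{-2}$.

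Next, I would invoke the kernel-decay result proved in Section~\ref{s:decay}. Roughly, integrating by parts in $s$ (which is the spectral variable on a bounded interval, so no boundary terms interfere with a suitable cut-off) produces decay in $d(x,y)$ at the cost of $s$-derivatives of $a$; pushing this to fourth order accounts for the four $s$-derivatives appearing in the statement. The boundary integral over $\omega$ is controlled uniformly by the regularity norm $\|a\|_\Omega$, using the Cowling--Setti-type estimate adapted in Section~\ref{s:decay}. Combined, these give a pointwise bound
\[
|K_a(x,y)| \le C_N \Bigl( \|a\|_\Omega + \sum_{k=0}^{4} \|\partial_s^k a\|_\infty \Bigr) \psi_N(d(x,y)),
\]
with $\psi_N$ decaying fast enough in $d(x,y)$ to dominate the exponential growth of spheres in $\T$.

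With such a kernel estimate in hand, both Schur sums
\[
\sup_{x \in \T} \sum_{y \in \T} |K_a(x,y)|, \qquad \sup_{y \in \T} \sum_{x \in \T} |K_a(x,y)|
\]
are finite and bounded by a constant times the right-hand side of the theorem; the symmetry $K_a(x,y) \leftrightarrow K_a(y,x)$ up to exchange of the role of the two Fourier--Helgason factors ensures both bounds follow from the same estimate. Schur's lemma then yields the announced inequality.

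The main obstacle I anticipate is ensuring that the decay $\psi_N$ really beats the volume growth $\sim q^{d(x,y)}$: the Plancherel density $|c(s)|^{-2}$ is bounded but only Hölder at the endpoints $s=0,\pi/\log q$, so the integration-by-parts argument must be cut off near these points, and the $L^\infty$ contribution from the cut-off region must still carry enough decay in $d(x,y)$. This is the delicate step; once it is settled, the remainder is a routine Schur test.
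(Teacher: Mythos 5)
Your first step (the pointwise kernel bound) is essentially the paper's proposition \ref{rapid decay proposition}, but your second step fails: the Schur test cannot close this argument. The decay you can extract from integrating by parts in $s$ is $|K_a(x,y)|\le C_a(N)\,q^{-d(x,y)/2}(1+d(x,y))^{-N}$ — the integrations by parts improve only the polynomial factor, never the exponential rate, because the oscillatory factor $q^{(1/2+is)(h_\omega(y)-h_\omega(x))}$ carries the envelope $q^{h_\omega(y)-h_\omega(x)}$ whose integral over $\Omega$ is of size $q^{-d(x,y)/2}$ up to polynomial factors (this is exactly the decay of the spherical functions, and of ``rapidly decreasing'' functions on $\T$ in the sense of the Paley--Wiener theorem of \cite{CS99}). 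With this decay the Schur sums are infinite: the sphere of radius $n$ about $x$ has $(q+1)q^{n-1}$ points, so $\sum_{y}|K_a(x,y)|\gtrsim \sum_n q^{n}\,q^{-n/2}(1+n)^{-N}=\sum_n q^{n/2}(1+n)^{-N}=\infty$. So the claim that the row and column sums are ``finite and bounded'' is wrong, and no $N$ fixes it. (Also, your anticipated difficulty is misplaced: $|\mathbf{c}(s)|^{-2}$ and $\mathbf{c}(\pm s)|\mathbf{c}(s)|^{-2}$ are $C^\infty$ on $[0,\tau]$, and the boundary terms in the integration by parts vanish because the symbol class imposes $\partial_s^k a(x,\omega,0)=\partial_s^k a(x,\omega,\tau)=0$, not because of a cut-off near the endpoints.)

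The paper bridges this gap with an iteration argument instead of Schur. Lemma \ref{kernel product lemma} shows that the class of kernels bounded by $C\,q^{-d(x,y)/2}(1+d(x,y))^{-N}$, $N\ge 3$, is stable under composition with only a multiplicative loss in the constant; the point is the tree identity $d(x,z)+d(z,y)=d(x,y)+2d(z,c_z(x,y))$, so the extra factor $q^{-d(z,c_z(x,y))}$ exactly offsets the number of points $z$ at a given distance from the geodesic $[x,y]$. Then, for a finitely supported truncation of the kernel, one bounds $\|(A^*A)^k\|_{HS}\le (C_A(N)C(N))^{2k}\cdot(\text{support factor})$ and uses $\|A\|_2=\rho(A^*A)^{1/2}=\lim_k\|(A^*A)^k\|_2^{1/2k}$, so the support factor disappears in the limit; a weak-limit argument in $R$ then removes the truncation (proposition \ref{continuity_kernel}). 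If you want to salvage your plan, you must replace the Schur step by some such mechanism that exploits composition/cancellation (this power trick, or a Cotlar--Stein or Haagerup-type inequality); a pointwise bound of the strength actually available can never give $L^2$-boundedness through Schur's lemma on a tree.
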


In section \ref{s:adjoint} we consider semi-classical symbols and prove an adjoint formula
\begin{thm}
Let $a = a_\eps \in \Sc$. Let $\Op(a)^*$ be the adjoint of the pseudo-differential operator $\Op(a)$ associated with the symbol $a$. Then
\[ \| \Op(a)^* - \Op(\overline{a}) \|_{L^2 \rightarrow L^2} = o(1), \]
where $o(1) \rightarrow 0$ when $\epsilon \rightarrow 0$.
\end{thm}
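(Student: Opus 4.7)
The plan is to identify $\Op(a)^{*} - \Op(\overline{a})$ as a pseudo-differential operator whose symbol is controlled by $\eps$, and then to conclude using the $L^{2}$ continuity theorem of section \ref{s:continuity}. The model here is Zelditch's treatment of the adjoint for his hyperbolic pseudo-differential calculus: the discrepancy between the two operators is essentially measured by the variation of $a$ in the $x$ variable on the geometric scale of the kernel, which is small in the semi-classical regime.

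First, using the Fourier--Helgason inversion formula from section \ref{s:harmonic}, I would write $\Op(a)$ as an integral operator with kernel of the schematic form
\[ K_{a}(x,y) = \int_{\Omega} \int_{0}^{\pi/\log q} a(x,\omega,s)\, \Phi_{\omega,s}(x,y)\, d\mu(\omega,s), \]
where $\Phi_{\omega,s}(x,y)$ is built from the plane waves $e_{\omega,s}(x)\overline{e_{\omega,s}(y)}$ and $d\mu$ is the Plancherel weight. Taking the adjoint swaps $x$ and $y$ in the $a$-slot, so the kernel of $\Op(a)^{*} - \Op(\overline{a})$ equals
\[ \int_{\Omega} \int_{0}^{\pi/\log q} \bigl(\overline{a(y,\omega,s)} - \overline{a(x,\omega,s)}\bigr)\, \Phi_{\omega,s}(x,y)\, d\mu(\omega,s). \]

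Second, the rapid decay of $\Phi_{\omega,s}$-type kernels established in section \ref{s:decay} localises everything near the diagonal $d(x,y) \leq R$ for any fixed $R$, up to a remainder that the decay makes harmless. In that near-diagonal region, because $a = a_{\eps}\in\Sc$, the difference $a(y,\omega,s) - a(x,\omega,s)$ is $O(\eps)$: the usual Taylor argument being unavailable on $\T$, I would telescope along the unique geodesic from $x$ to $y$ and bound each one-step increment by $\eps$ using the stability of $\Sc$ under the shift operator of section \ref{s:operators}. Repackaging this gives $\Op(a)^{*} - \Op(\overline{a}) = \Op(b_{\eps})$ for a new semi-classical symbol $b_{\eps} \in \Sc$ whose norm in the sense of the continuity theorem satisfies
\[ \|b_{\eps}\|_{\Omega} + \sum_{k=0}^{4} \|\partial_{s}^{k} b_{\eps}\|_{\infty} = O(\eps), \]
since the $\omega$-regularity is inherited from shifted copies of $a$ and the $s$-derivatives reduce to $s$-derivatives of such shifts. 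Applying the $L^{2}$ boundedness theorem then yields the claimed $o(1)$ bound.

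The main obstacle is the second step. On a tree, differentiation in $x$ does not exist, so the key Taylor-type estimate must be replaced by a discrete telescoping along geodesics, together with the algebra properties of $\Sc$ under the shift and transfer operators. The delicate bookkeeping is to absorb the contribution of pairs $(x,y)$ with $d(x,y)$ large into the symbol norms using the decay from section \ref{s:decay}, while simultaneously keeping the resulting symbol inside $\Sc$ with control on all four $s$-derivatives; this is where the structural results from section \ref{s:operators} have to be used in earnest.
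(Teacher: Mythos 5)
Your overall route is the same as the paper's: the kernel of $\Op(a)^{*}-\Op(\overline{a})$ is that of a double-symbol operator $\Opd(r)$ with $r(x,y,\omega,s)=\overline{a(y,\omega,s)}-\overline{a(x,\omega,s)}$ (after the change of measure $d\nu_y=q^{h_\omega(y)-h_\omega(x)}d\nu_x$, which your ``schematic'' kernel hides but which is the only real computation in this identification), one cuts off near the diagonal using the rapid decay of proposition \ref{rapid decay proposition}, and one bounds the near-diagonal piece through the symbol conditions together with proposition \ref{continuity_kernel}. But the central ``repackaging'' step is flawed as stated. First, the shift-stability of proposition \ref{prop stab geodesic} is not the right tool for the one-step increments: the shift $\sigma$ moves points towards $\omega$, not along the geodesic from $x$ to $y$, and in any case the Lipschitz condition \ref{varx lipschitz} of definition \ref{d:symbol} already gives $|a(x,\omega,s)-a(y,\omega,s)|\le C\eps\, d(x,y)$ directly. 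More importantly, that sup-norm bound is only half of what proposition \ref{rapid decay proposition} needs for the cut-off symbol: you must also control the $\omega$-regularity norm $\|r_\rho\|_{\Omega,N}$ of the difference, i.e.\ $\|(r_\rho-\mathcal{E}_n^x r_\rho)(x,y,\cdot,\cdot)\|_\infty$, and the only way to do this is the cross condition \ref{varx cross derivative}, which yields $\eps\, C_l(d(x,y))/(1+n)^l\le \eps\, C_l(\rho)/(1+n)^l$ with $C_l$ an arbitrary increasing function. Your phrase ``the $\omega$-regularity is inherited from shifted copies of $a$'' does not produce this estimate, and without it the kernel decay and hence the $L^2$ bound for the near-diagonal part are not established.

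Second, and more seriously, the claim that the whole difference can be written as $\Op(b_\eps)$ with $\|b_\eps\|_\Omega+\sum_{k=0}^4\|\partial_s^k b_\eps\|_\infty=O(\eps)$ is unjustified and false in general. The off-diagonal part $d(x,y)>R$ is not small in $\eps$ at all (its symbol is of size $\|a\|_\infty$); the rapid decay only makes its operator norm $O(R^{-\alpha})$ uniformly in $\eps$, so it cannot be ``absorbed into the symbol norms'' and must be kept as a separate remainder operator. Meanwhile the near-diagonal symbol has $\Omega$-norm of order $\eps\, C_l(R)$, where $C_l$ may grow arbitrarily fast. So for fixed $R$ you only get a bound of the form $C(R)\,\eps + C R^{-\alpha}$, and to conclude you must let $R=\rho(\eps)\to\infty$ slowly enough that $\eps\, C_3(\rho(\eps))\to 0$ (equivalently, take $\eps\to0$ first and then $R\to\infty$). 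This is exactly why the theorem asserts only $o(1)$, and why an $O(\eps)$ rate requires the additional polynomial-growth hypothesis on $C_l$ as in theorem \ref{t:product optimal}. With conditions \ref{varx lipschitz} and \ref{varx cross derivative} invoked in place of the shift/telescoping argument, and with the diagonal choice $\rho(\eps)$ made explicit, your outline becomes the paper's proof.
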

and a product formula
\begin{thm}
Let $a \in \S$ and $b = b_\eps \in \Sc$. Then
\[ \| \Op(a)\Op(b) - \Op(ab) \|_{L^2 \rightarrow L^2} = o(1), \]
where $o(1) \rightarrow 0$ when $\epsilon \rightarrow 0$.
\end{thm}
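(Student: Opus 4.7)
The plan is to start from $\Op(a)\Op(b)u(x)$, expand the inner operator using the Fourier-Helgason inversion from Section~\ref{s:harmonic}, and exploit the slow spatial variation of the semi-classical symbol $b$ to freeze $b(y,\omega,s)$ at $y=x$, producing $\Op(ab)u(x)$ plus a remainder that I would show to be $o(1)$ as $\eps\to 0$.

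Concretely, I would first write the composition as $\Op(a)\Op(b)u(x) = \sum_{y\in\T} K_a(x,y)\,\Op(b)u(y)$, where $K_a$ is the Schwartz kernel of $\Op(a)$ studied in Section~\ref{s:decay}. Substituting $b(y,\cdot,\cdot)$ by $b(x,\cdot,\cdot)$ inside the Fourier-Helgason representation of $\Op(b)u(y)$ yields an expression that factorises in the Fourier variables, and a Plancherel-type identity from Section~\ref{s:harmonic} collapses the $y$-sum into the quantization of the product $a(x,\omega,s)b(x,\omega,s)$, that is $\Op(ab)u(x)$.

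The remainder takes the form
\[ Ru(x) = \sum_{y\in\T} K_a(x,y) \bigl( \Op(b - b_x) u \bigr)(y), \]
where $b_x(z,\omega,s) := b(x,\omega,s)$ is the frozen symbol. The key step is a mean-value estimate: since $b = b_\eps \in \Sc$, every symbolic seminorm entering the $L^2$ continuity theorem, when evaluated on $b - b_x$ at the base point $y$, is controlled by $C\eps\, d(x,y)$. This should translate, via the kernel analysis of Section~\ref{s:decay} applied to $b - b_x$, into a pointwise kernel bound for $\Op(b-b_x)$ carrying an extra factor $d(x,y)$. Combined with the rapid decay of $K_a(x,y)$ from Section~\ref{s:decay}, which is strong enough to dominate the exponential volume growth of $\T$ together with any polynomial factor in $d(x,y)$, a Schur test then yields $\|R\|_{L^2\to L^2} = O(\eps)$.

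The main obstacle I anticipate is the interaction between the freezing in $x$ and the regularity in $\omega$ and $s$. Controlling $b(y,\cdot,\cdot) - b(x,\cdot,\cdot)$ by $\eps\, d(x,y)$ in the symbolic norm is precisely the statement that $\Sc$ encodes the correct Lipschitz-type regularity on the tree through the shift and transfer operators of Definition~\ref{shift operator}; this must be used in a way compatible with the $\|\cdot\|_\Omega$ norm and with the $\partial_s$ derivatives appearing in the continuity theorem, and in particular one must verify that $b - b_x$ actually lies in $\S$ with seminorms bounded linearly in $d(x,y)$. Producing an absolutely convergent Schur kernel in spite of the exponential growth of balls in $\T$ then reduces to invoking the Schwartz-class decay of Section~\ref{s:decay} with sufficiently many derivatives of $a$, a step where the quantitative form of that decay estimate will matter.
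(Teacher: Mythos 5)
Your overall strategy (compute the full symbol of the composition, freeze $b$ at the outer point $x$, and estimate the remainder through the kernel decay and the $L^2$ continuity theorem) is essentially the paper's proof of the \emph{stronger} statement, theorem \ref{t:product optimal}. But as a proof of the theorem as stated there is a genuine gap, and it is exactly the point you flag at the end without resolving: the claim that ``every symbolic seminorm entering the $L^2$ continuity theorem, when evaluated on $b-b_x$, is controlled by $C\eps\,d(x,y)$'' is not true for the class $\Sc$ of definition \ref{d:symbol}. The Lipschitz condition \ref{varx lipschitz} does give $\|\partial_s^k(b(y,\cdot,\cdot)-b(x,\cdot,\cdot))\|_\infty \le C\eps\,d(x,y)$, but the seminorm that actually drives the rapid decay of the kernel (proposition \ref{rapid decay proposition}) is the $\omega$-regularity seminorm $\sup_n(1+n)^l\|(r-\mathcal{E}^x_n r)\|_\infty$, and for the difference symbol this is controlled only by condition \ref{varx cross derivative}, which gives $\eps\,C_l(d(x,y))/(1+n)^l$ with $C_l$ an \emph{arbitrary} function of the distance. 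If $C_l$ grows super-polynomially (it may even grow faster than $q^{d/2}$), then after composing with $K_a(x,y)\sim q^{-d(x,y)/2}(1+d(x,y))^{-N}$ the sum over $y$ need not converge, and certainly is not $O(\eps)$. This is precisely why the paper only claims $o(1)$ in theorem \ref{t:product}, and reserves the conclusion $O(\eps)$ you assert for theorem \ref{t:product optimal}, where the polynomial growth of $C_l$ is an extra hypothesis. To prove the stated theorem you need an additional idea: cut the remainder kernel off at $d(x,y)\le\rho$, bound the near part by $\eps\,C_3(\rho)$ using condition \ref{varx cross derivative} and propositions \ref{rapid decay proposition} and \ref{continuity_kernel}, bound the far part by $\rho^{-\alpha}$ using the rapid decay of the full remainder kernel, and then let $\rho=\rho(\eps)\to\infty$ slowly enough that $\eps\,C_3(\rho)\to 0$; alternatively follow the paper's route through the right quantization and theorem \ref{t:adjoint}.

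A secondary point: the assertion that the rapid decay of $K_a$ ``dominates the exponential volume growth of $\T$'' so that ``a Schur test'' closes the argument is too quick. The decay is only $q^{-d/2}$ times a polynomial factor, while spheres grow like $q^{d}$, so the row sums $\sum_y|K_a(x,y)|$ diverge and a naive Schur test fails. Boundedness must instead be obtained as in proposition \ref{continuity_kernel} (the $(A^*A)^k$/spectral radius argument), and convergence of the composed kernel uses the tree identity $d(x,z)+d(z,y)=d(x,y)+2d(z,c_z(x,y))$ as in lemma \ref{kernel product lemma}, or, in the symbolic computation, the identity $\sum_{y:d(x,y)=n}q^{\frac12(h_\omega(y)-h_\omega(x))}=(1+n)q^{n/2}$ used in the proof of theorem \ref{t:product optimal}. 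With these substitutions, and either the polynomial hypothesis on $C_l$ or the cutoff in $\rho(\eps)$, your argument can be completed.
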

We also give (theorem \ref{t:product optimal}) a proof leading to a stronger conclusion (replacing $o(1)$ by $O(\eps)$) when the symbols satisfy a more restrictive (but reasonable) condition.
The expansions in all these formulas are done only to the first order, as it is not clear what higher orders would mean in this setting.

In section \ref{s:commutator} we give a formula for the symbol of the commutator of a pseudo-differential operator $\Op(a)$ and the discrete Laplacian on the tree, which makes evident a relationship with the dynamics.

\subsection*{Acknowledgements}
The author would like to thank his advisor Nalini Anantharaman for her guidance, corrections, and all the discussions that led to the mathematical ideas in this paper. The author would also like to thank Yves Colin de Verdi\`ere for helpful discussions and encouragements.

\section{Harmonic analysis on homogeneous trees}\label{s:harmonic}
The basic elements of harmonic analysis on homogeneous trees that we will need can be found in \cite{CMS98} and in the first two chapters of \cite{FTN91}. We will recall them briefly in this section.

\subsection{The tree and its boundary}
Let $\T$ be the $q+1$-homogeneous tree. We see it as a metric space $(\T,d)$ with $\T$ the set of vertices and $d : \T \times \T \rightarrow \N$ a distance such that $d(x,y)=1$ if and only if the two vertices $x$ and $y$ are neighbours. We denote by $[x,y]$ the unique \emph{geodesic path} joining two arbitrary vertices $x$ and $y$, that is the sequence of vertices $(x_0, x_1,\ldots,x_n)$ such that $x_0=x$, $x_n=y$, and $d(x_i,x_j) = |i - j |$ for all $i,j \in \{0, \ldots, n \}$. We fix an arbitrary reference point $o \in \T$, and $|x|$ will denote the distance from $o$ of the vertex $x$.

The boundary $\Omega$ of the tree can be defined in one of the following equivalent ways. 
\begin{itemize}
 \item It is the set of half geodesics $(o,x_1,x_2,\ldots) \in \T^\N$ starting at $o$.
 \item It is also the set of equivalence classes of half geodesics, where two half geodesics $(x_0,x_1,x_2,\ldots)$ and $(y_0,y_1,y_2,\ldots)$ are equivalent if they meet at infinity, that is if there exists $k,N \in \N$ such that for all $n \geq N, x_n = y_{n+k}$.
\end{itemize}
The half geodesic starting at a point $x$ and equivalent to $\omega$ will be denoted by $[x,\omega)$. 

The topology on the boundary $\Omega$ is described by the basis of open \emph{cylinders} 
\[ \Omega(x,y) = \{ \omega \in \Omega, [x,y] \text{ is a subsequence of } [x,\omega) \}, \] for all $x,y \in \T$.

Let us introduce also the set $\Omega_n(x,\omega)$, for every $x \in \T$, $\omega \in \Omega$ and $n \in \N$. It is defined by
\begin{equation}
 \Omega_n(x,\omega) = \Omega(x,x_n),
\end{equation}
where $d(x,x_n) = n$ and $[x,x_n]$ is a subsequence of $[x,\omega)$.

For every $x \in \T$, a measure $\nu_x$ can be defined on the boundary by noticing that for every $n \in \N$, $\{ \Omega(x,y)\}_{y:d(x,y) = n}$ forms a partition of $\Omega$ into $(q+1)q^{n-1}$ sets. For every $x \in \T$, there is a probability measure $\nu_x$ such that on these partitions we have \[ \nu_x(\Omega(x,y)) =  \frac{1}{(q+1)q^{n-1}}. \]
When $x=o$, we will generally denote this measure by $\nu$ instead of $\nu_o$.

\subsection{Horocycles and height functions}
The analogue of the Busemann functions of Riemannian geometry are the height functions $h_\omega : \T \rightarrow \Z$ defined for all $\omega = (x_0, x_1,\ldots)$ in $\Omega$ with $x_0 = o$ by
\[ h_\omega(x) = \lim_{m \rightarrow + \infty} (m - d(x,x_m)). \]
Note that these functions are normalized so that $h_\omega(o) = 0$. We can then define the $\omega$-horocycles to be the level sets of $h_\omega$ : for all $k \in \Z$ 
\[ \mathfrak{h}(\omega,k) = \{x \in \T : h_\omega(x) = k \}. \]

We will mostly use another description of the height functions. For every $x \in \T$ and $\omega \in \Omega$, we denote by $c(x,\omega)$ the last point lying on $[o,\omega)$ in the geodesic path $[o,x]$ and call it the \emph{confluence point} of $x$ and $\omega$. We have
\begin{equation}\label{e:confluence}
  h_\omega(x) = 2 |c(x,\omega)| - |x|.
\end{equation}
This allows us to introduce a partition of $\Omega$. For $x \in \T$, it is the family of sets defined by
\begin{equation}\label{e:decomposition omega}
 E_i(x) = \{ \omega \in \Omega : |c(x,\omega)| = i \},
\end{equation}
for all $i \in \N$ such that $0 \leq i \leq |x|$. The set $E_{|x|}(x)$ is also simply denoted by $E(x)$. We have
\begin{equation}
 \nu(E_i(x)) \leq q^{-i}
\end{equation}
for every $0 \leq i \leq |x|$. More precisely we have $\nu(E_0(x)) = \frac{q}{q+1}$, $\nu(E(x)) = \frac{q}{q+1} q^{-|x|}$ and $\nu(E_j(x)) = \frac{q-1}{q+1}q^{-j}$ for every $i\in \N$ such that $0 < i < |x|$.

We define a family of averaging operators $\mathcal{E}_n$ acting on bounded measurable functions $\nu$ defined on $\Omega$, by $\mathcal{E}_{-1} = 0$ and, when $n \geq 0$,
\begin{equation}\label{e:mean operator}
 \mathcal{E}_n \eta(\omega) = \frac{1}{\nu(\Omega_n(o,\omega))} \int_{\Omega_n(o,\omega)} \eta(\omega') d\nu(\omega').
\end{equation}

\subsection{Change of reference point}
Note that all the previous objects can be defined with respect to another reference point. We will often use a superscript to denote the change of reference point: for a new reference point $x_0$ we would write $c^{x_0}(x,\omega)$, $E_i^{x_0}(x)$, $\mathcal{E}_n^{x_0}$\ldots{} However, we will write directly $h_\omega(x) - h_\omega(x_0)$ rather than $h^{x_0}_\omega(x)$.

Note the Radon-Nikodym derivative
\begin{equation}
 \frac{d\nu_y}{d\nu_x}(\omega) = q^{h_\omega(y) - h_\omega(x)},
\end{equation}
for all $x,y \in \T$ and $\omega \in \Omega$.
\subsection{The Fourier-Helgason transform}
Let $f : \T \rightarrow \C$ be a finite support function. The \emph{Fourier-Helgason transform} of $f$, denoted by $\hat f$ or $\mathcal{H}f$ is given by the formula
\[ \mathcal{H}f(\omega,s) = \hat f(\omega,s) = \sum_{y \in T} f(y) q^{(1/2 + is)h_\omega(y)} \]
for every $\omega \in \Omega$ and $s \in \mathbb{T}$,  with $\mathbb{T} = \Z / 2\tau \Z$, $\tau = \frac{\pi}{\log q}$. Note that here $\log(t)$ is the natural logarithm of $t \in \R$.
We have an inverse formula given by
\begin{equation}\label{e:inverse}
  f(x) = \int_\Omega \int_{\mathbb{T}} q^{(1/2 - is)h_\omega(x)} \hat{f}(\omega,s)  d\nu(\omega)  d\mu(s).
\end{equation}
The measure $d\mu(s)$ is called the \emph{Plancherel measure}, defined by
\begin{equation}\label{e:plancherel}
 d\mu(s) = c_P|\mathbf{c}(s)|^{-2},
\end{equation}
where $c_P = \frac{q \log(q)}{4 \pi (q+1)}$ and for every $z \in \C \setminus \tau \Z$,
\begin{equation}\label{c(z)}
 \mathbf{c}(z) = \frac{q^{1/2}}{q+1} \frac{q^{1/2 + iz} - q^{-1/2 -iz}}{q^{iz} - q^{-iz}}.
\end{equation}
In particular
\begin{equation}\label{c(s)^-2}
 |c(s)|^{-2} = \frac{(q+1)^2}{q} \frac{4 \sin^2(s \log q)}{q + q^{-1} - 2 \cos(2s\log q)}.
\end{equation}

There is a Plancherel formula: if $f$ and $g$ are two finitely supported functions on $\T$, then
\[ \sum_{x \in \T} f(x)\overline{g(x)} = \int_\mathbb{T}\int_\Omega \hat f(\omega,s) \overline{\hat g(\omega,s)} d\nu(\omega) d\mu(s). \]
The Fourier-Helgason transform $\mathcal{H}$ can therefore be extended to an isometry from $L^2(\T)$ to its image in $L^2(\Omega \times \mathbb{T})$, which is the subspace of functions $F$ satisfying the \emph{symmetry condition}
\begin{equation}\label{e:symmetry}
 \int_\Omega q^{(\frac{1}{2}-is) h_\omega(x)} F(\omega,s) d\nu(\omega) 
  =  \int_\Omega q^{(\frac{1}{2}+is) h_\omega(x)} F(\omega,-s) d\nu(\omega)
\end{equation}

\subsection{Spherical functions}
The spherical functions are defined, for $z \in \C$ by
\[ \phi_z(x) = \int_\Omega q^{(\frac{1}{2}+iz)h_\omega(x)} d\nu(\omega). \]
They are given explicitly by
\begin{equation}
 \phi_z(x) = 
 \begin{cases}
  \left(\frac{q-1}{q+1}|x|+1\right)q^{-\frac{|x|}{2}} & \text{if } z \in 2\tau\Z \\
  \left(\frac{q-1}{q+1}|x|+1\right)q^{-\frac{|x|}{2}}(-1)^{|x|} & \text{if } z \in \tau + 2\tau\Z \\
  c(z)q^{(iz-\frac{1}{2})|x|} + c(-z) q^{-iz-\frac{1}{2})|x|} & \text{if } z \in \C \setminus \tau\Z
 \end{cases}
\end{equation}

\subsection{Rapidly decreasing functions}
A fundamental property used in this article will be the link between rapid decay of functions and regularity of their Fourier-Helgason transform. To make this link we need some definitions.

\begin{defn} 
A function $f : \T \rightarrow \C$ is said to be \emph{rapidly decreasing} if for every $k \in \N$, there exists $C_k > 0$ such that
\[ |f(x)| \leq C_k \frac{q^{-\frac{|x|}{2}}}{(1+|x|)^k}. \]
We denote by $\mathcal{S}(\T)$ the space of rapidly decreasing functions on $\T$.
\end{defn}
\begin{defn}
A function $F : \Omega \times \mathbb{T} \rightarrow \C$ is in $C^\infty(\Omega \times \mathbb{T})$ if
\begin{enumerate}
\item $\forall l \in \N \quad \partial_s^l F(\omega,s) \in C(\Omega \times \mathbb{T})$, where $C(\Omega \times \mathbb{T})$ denotes the space of continuous functions on $\Omega \times \mathbb{T}$.
\item $\forall k,l \in \N, \exists C_{k,l} > 0, \forall n \in \N \quad \| \partial_s^k(F - \mathcal{E}_nF) \|_\infty \leq C_{k,l} (n+1)^{-l}$
\end{enumerate}
We will denote by $C^\infty(\Omega \times \mathbb{T})^\flat$ the space of functions in $C^\infty(\Omega \times \mathbb{T})$ satisfying the symmetry condition $\eqref{e:symmetry}$.
\end{defn}
We then have the following theorem, proved in \cite{CS99}
\begin{thm}\label{t:Paley Wiener 2}
 The Fourier-Helgason transform $\mathcal{H}$ is an isomorphism from $\mathcal{S}(\T)$ onto $C^\infty(\Omega \times \mathbb{T})^\flat$.
\end{thm}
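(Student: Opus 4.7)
\emph{Plan.} I would prove both inclusions separately.

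\emph{Forward direction ($\mathcal{H}(\mathcal{S}(\T))\subseteq C^\infty(\Omega\times\mathbb{T})^\flat$).} For $f\in\mathcal{S}(\T)$, I first verify that the series $\hat f(\omega,s)=\sum_y f(y) q^{(1/2+is)h_\omega(y)}$ converges absolutely and uniformly in $(\omega,s)$: grouping terms with $|y|=n$ by $j=|c(y,\omega)|\in\{0,\ldots,n\}$ and counting shows $\sum_{|y|=n} q^{h_\omega(y)/2}=O(n\,q^{n/2})$, which is absorbed by the rapid-decay estimate $|f(y)|\leq C_k q^{-n/2}/(1+n)^k$ for $k$ large. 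Differentiating under the sum pulls out an extra factor $(i\log q\,h_\omega(y))^k$ with $|h_\omega(y)|\leq|y|$, still controlled by rapid decay. The symmetry condition is immediate from the Plancherel identity. For the quantitative estimate $\|\partial_s^k(\hat f-\mathcal{E}_n\hat f)\|_\infty=O((n+1)^{-l})$, the key geometric observation is that for every $y$ with $|y|\leq n$, $h_{\omega'}(y)$ is constant as $\omega'$ varies in $\Omega_n(o,\omega)$ (because $[o,\omega')$ and $[o,\omega)$ coincide past $c(y,\omega)$), so $\mathcal{E}_n$ acts termwise as the identity. Only $y$ whose geodesic from $o$ traverses $x_{n+1}$ contribute to the difference; for such $y$ of length $n+1+r$, a branching count yields $\sum_y q^{h_\omega(y)/2-|y|/2}=O(r+1)$, and combining this with rapid decay of $f$ gives arbitrary polynomial decay in $n$.

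\emph{Converse direction.} Given $F\in C^\infty(\Omega\times\mathbb{T})^\flat$, define $\check F$ via the inversion formula \eqref{e:inverse}; I want $\check F\in\mathcal{S}(\T)$. Decomposing $\Omega=\bigsqcup_{i=0}^{|x|} E_i(x)$, on each piece of which $h_\omega(x)=2i-|x|$ is constant, gives
\[
 \check F(x)=\sum_{i=0}^{|x|} q^{i-|x|/2}\int_\mathbb{T} q^{-is(2i-|x|)}\Bigl(\int_{E_i(x)} F(\omega,s)\,d\nu(\omega)\Bigr)d\mu(s).
\]
The exponential decay $q^{-|x|/2}$ is built in thanks to $\nu(E_i(x))\leq q^{-i}$. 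To upgrade to arbitrary polynomial decay $(1+|x|)^{-k}$, I use two complementary mechanisms: (i) integrate by parts $N$ times in $s$ to gain $(1+|2i-|x||)^{-N}$, using smoothness of $F$ and of $|\mathbf c(s)|^{-2}$ away from $\tau\Z$, which handles all $i$ far from $|x|/2$; (ii) for $i$ near $|x|/2$, where the oscillation frequency in $s$ is too small to exploit, replace $F$ by $\mathcal{E}_{i+1}F$ on each of the cylinders $\Omega_{i+1}(o,\cdot)$ whose union is $E_i(x)$, evaluate the averaged piece explicitly via a spherical-function identity, and bound the error by $\|\partial_s^k(F-\mathcal{E}_{i+1}F)\|_\infty=O((i+1)^{-l})$.

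\emph{Main obstacle.} The forward direction is essentially a careful counting argument. The delicate part is the converse: one has to balance precisely the three sources of decay (exponential from $q^{i-|x|/2}\nu(E_i(x))$, polynomial from integration by parts in $s$, polynomial from $\omega$-averaging) so that the bounds in the three regimes of $i$ combine into the single estimate $|\check F(x)|\leq C_k q^{-|x|/2}(1+|x|)^{-k}$. A further complication is the non-smoothness of $|\mathbf c(s)|^{-2}$ at $s\in\tau\Z$; the integration by parts must be organised so as not to generate uncontrolled boundary contributions there, and this is exactly where the symmetry condition \eqref{e:symmetry} enters, by allowing one to symmetrise $q^{(1/2-is)h_\omega(x)}F(\omega,s)$ with its reflection before differentiating in $s$.
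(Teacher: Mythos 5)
First, note that the paper does not prove this theorem at all: it is quoted from \cite{CS99}, and the closest in-paper argument is the adaptation of the second half of that proof in Proposition \ref{rapid decay proposition}. Your forward direction is essentially sound (the counting $\sum_{|y|=n}q^{h_\omega(y)/2}=O(nq^{n/2})$ is correct, and the observation that $h_{\omega'}(y)$ is constant on $\Omega_n(o,\omega)$ for the relevant $y$ is the right mechanism, up to the slip that the terms affected by $\mathcal{E}_n$ are all $y$ lying strictly beyond $x_n$ in its shadow, not only those through $x_{n+1}$; also the symmetry condition is the functional equation of the Poisson kernel, not something ``immediate from Plancherel'').

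The genuine gap is in your converse direction, regime (ii). For $i$ near $|x|/2$ you bound each $E_i(x)$-term separately, extracting smallness only from $F-\mathcal{E}_{i+1}F$; but the remaining averaged piece has no decay mechanism: with $2i-|x|\approx 0$ there is no oscillation in $s$, and no spherical function appears when the $\omega$-integration is confined to a single $E_i(x)$ (with an averaging scale $i+1$ that changes with $i$, the averaged pieces do not reassemble across $i$). The term is genuinely of size $q^{-|x|/2}$: take $F(\omega,s)=g(s)$ even and smooth with $\int g\,d\mu\neq 0$; then the term $i=|x|/2$ equals $\nu(E_i(x))\int g\,d\mu\sim q^{-|x|/2}$, so any per-$i$ absolute-value estimate of the main term can never yield $q^{-|x|/2}(1+|x|)^{-k}$. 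The rapid decay comes from cancellation \emph{across} the $E_i$'s, and the way \cite{CS99} (and Proposition \ref{rapid decay proposition}) captures it is different from your plan: average once at the fixed scale $M=\lfloor|x|/3\rfloor$, split $\mathcal{E}_MF=\mathcal{A}_M+A_M$ where $A_M(s)$ is the constant value on $\bigcup_{j\ge M}E_j(x)$, so that the $A_M$ part integrates against the full Poisson kernel to give $A_M(s)\phi_s(x)$, whose explicit form $c(\pm s)q^{(\pm is-\frac12)|x|}$ restores an oscillation of frequency $|x|$ amenable to integration by parts, while $\mathcal{A}_M$ is supported where $|2j-|x||\ge|x|/3$. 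Finally, your ``further complication'' is misplaced: by \eqref{c(s)^-2} the density $|\mathbf{c}(s)|^{-2}$ is smooth and vanishes at $\tau\Z$; it is $\mathbf{c}(\pm s)$ that has poles, and these are cancelled so that $s\mapsto \mathbf{c}(\pm s)|\mathbf{c}(s)|^{-2}$ is $C^\infty$. On the torus $\mathbb{T}$ the integrations by parts produce no boundary terms, so the symmetry condition is not what rescues them; it is needed to characterize the image (so that $\mathcal{H}\check F=F$), not for the decay estimate.
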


\subsection{About the symmetry condition} The symmetry condition $\eqref{e:symmetry}$ does not behave well when we take the product of two functions. This means that the product of two functions satisfying this condition does not satisfy it anymore in general. We would like to avoid working with this condition.
For this purpose, let us examnie the inversion formula \eqref{e:inverse} and note that we have
\begin{align*}
 f(x) &= \int_\Omega \int_{\mathbb{T}}  q^{(\frac12 - is)h_\omega(x)} \hat{f}(\omega,s) d\nu(\omega) d\mu(s) \\
 &= 2 \int_\Omega \int_0^\tau q^{(\frac12 - is)h_\omega(x)} \hat{f}(\omega,s) d\nu(\omega) d\mu(s),
\end{align*}
using the fact that $d\mu(-s) = d\mu(s)$ and the symmetry condition \eqref{e:symmetry}.
We can therefore work with $s \in [0,\tau]$ instead of $s \in \mathbb{T}$. We will see the Fourier-Helgason transform $\mathcal{H}$ as an isometry from $L^2(\T)$ onto $L^2(\Omega \times [0,\tau])$, and we will not need to refer to the symmetry condition anymore. 

But then, theorem \ref{t:Paley Wiener 2} cannot be used in this form. The only part of this theorem we will be interested in is the fact that if a function $F(\omega,s)$ is in $C^\infty(\Omega \times \mathbb{T})$ and satisfies the symmetry condition, then $\mathcal{H}^{-1}F$ is rapidly decreasing. To maintain this property without the symmetry condition, we will need to ask that $\partial_s^k F(\omega,0) = \partial_s^k F(\omega,\tau) = 0$ for all $k \in \N$. This is what we will do when we define symbol classes in section \ref{ss:operators}. The adaptation of the proof of theorem \ref{t:Paley Wiener 2} to the new context will be done in section \ref{s:decay}.

\section{Operators and symbol classes}\label{s:operators}

\subsection{Definitions}\label{ss:operators}
Following the idea of \cite{Zel86} on the hyperbolic plane, we will use the Fourier-Helgason transform to globally define pseudo-differential operators on the tree.

\begin{defn}
 Let $a : \T \times \Omega \times [0,\tau] \rightarrow \C$ be a measurable function. We associate an operator $\Op(a)$ with $a$ in the following way: 
 \[ \Op(a)u(x) = \sum_{y\in \T} \int_\Omega \int_0^\tau q^{\left(\frac{1}{2}+is \right)(h_\omega(y) - h_\omega(x))} a(x,\omega,s)u(y) d\nu_x(\omega) d\mu(s)\]
 for every $u : \T \rightarrow \C$ with finite support.
 
 For $c : \T \times \T \times \Omega \times [0,\tau] \rightarrow \C$ measurable, we define the operator $\Opd(c)$ associated with $c$ by
  \[ \Opd(c)u(x) = \sum_{y\in \T} \int_\Omega \int_0^\tau q^{\left(\frac{1}{2}+is \right)(h_\omega(y) - h_\omega(x))} c(x,y,\omega,s)u(y) d\nu_x(\omega) d\mu(s)\]
 for every $u : \T \rightarrow \C$ with finite support.
\end{defn}

Note that if $c(x,y,\omega,s) = c(x,\omega,s)$ does not depend on $y$, then $\Opd(c) = \Op(c)$. The operators of the second definition are therefore more general.

The kernels associated with these operators are easy to extract from the definition. We will denote by 
\[ k_a(x,y) = \int_\Omega \int_0^\tau q^{\left(\frac{1}{2}+is \right)(h_\omega(y) - h_\omega(x))} a(x,\omega,s) d\nu_x(\omega) d\mu(s) \]
the kernel of $\Op(a)$ and by
\[ K_c(x,y) = \int_\Omega \int_0^\tau q^{\left(\frac{1}{2}+is \right)(h_\omega(y) - h_\omega(x))} c(x,y,\omega,s) d\nu_x(\omega) d\mu(s) \]
the kernel of $\Opd(c)$.

These definitions have the important property that they do not depend on the choice of reference point on the tree.

\paragraph{}
We now define classes of functions called \emph{symbols} for which these operators will have interesting properties. The conditions are chosen so that these classes contain interesting examples of symbols (see Example \ref{ex:main}). Let us first consider a general class of \emph{double symbols}

\begin{defn}\label{d:double}
 Let $\S(\T \times \T)$ be the set of functions \[c : \T \times \T \times \Omega \times [0, \tau] \rightarrow \C \] satisfying the following conditions:

\begin{enumerate}
\item\label{double reg omega} For every $x,y \in \T$, $\partial_s^k c(x,y,\omega,s) \in C(\T \times \T \times \Omega \times [0,\tau])$, and for every $l \in \N$ there exists a constant $C_{l}$ such that
\[ \forall n \in \N, \forall x,y \in \T \quad \| (c - \mathcal{E}^x_n c)(x,y, \cdot, \cdot) \|_\infty \leq \frac{C_{l}}{(1+n)^l}. \]

\item For every $k \in \N$, $x,y \in \T$ and $ \omega \in \Omega$, 
\[\partial_s^k c(x,y,\omega, 0) =  \partial_s^k c(x,y, \omega, \tau) = 0.\]
\end{enumerate}
\end{defn}

There is an important subclass of $\S(\T \times \T)$.
\begin{defn}\label{d:symbol}
Let $\epsilon > 0$. We define the class of \emph{semi-classical symbols} $\Sc(\T)$ to be the set of functions ${a_\eps : \T \times \Omega \times [0, \tau] \rightarrow \C}$ satisfying the following conditions:

\begin{enumerate}
\item\label{reg_omega} For every $x \in \T$ and $k \in \N$, $\partial_s^k a_\eps(x,\omega,s) \in C(\Omega \times [0,\tau])$, and for every $l\in \N$ there exists a constant $C_{l}$ such that
\[ \forall n \in \N, \forall x \in \T \quad \| (a_\eps - \mathcal{E}^x_n a_\eps)(x, \cdot, \cdot) \|_\infty \leq \frac{C_{l}}{(1+n)^l}. \]

\item\label{bord} For every $k \in \N$, $x \in \T$ and $ \omega \in \Omega$, $\partial_s^k a_\eps(x,\omega, 0) =  \partial_s^k a_\eps(x, \omega, \tau) = 0$.

\item\label{varx}
\begin{enumerate}
\item\label{varx lipschitz} We have a control over the variation in the first argument of the derivatives of $a_\eps$ with respect to $s$: There exists $C > 0$ such that
\[ \forall x,y \in \T, \forall k \in \N \quad |\partial_s^ k a_\eps(x,\omega,s) - \partial_s^k a_\eps(y,\omega,s)| \leq C \epsilon d(x,y), \]
\item\label{varx cross derivative}For every $l \in \N$, there exists a positive function $t \mapsto C_l(t)$ such that for every $x,y \in \T$ and $n \in \N$
\[ | (a_\eps - \mathcal{E}^x_n a_\eps)(x,\omega,s) - (a_\eps - \mathcal{E}^x_n a_\eps)(y,\omega,s) | \leq \epsilon  \frac{C_l(d(x,y))}{(1+n)^l}. \]
The function $C_l$ is arbitrary. In particular we can think of it as an increasing function.
\end{enumerate}
\end{enumerate}
\end{defn}

\begin{rem}
Condition \ref{reg_omega} in both definitions is a condition of regularity with respect to $\omega$. Condition \ref{bord} is here to compensate for the absence of symmetry condition (see the paragraph ``About the symmetry condition'' in section \ref{s:harmonic}). These two conditions are sufficient to get a rapid decay of the kernels and continuity of the operators on $L^2$. That is why these two results are true for symbols that are only in $\S$. Note that in these two conditions, no regularity with respect to $x$ is asked.

Condition \ref{varx lipschitz} asks that the symbols do not vary too much in $x$. Condition \ref{varx cross derivative} is in a way a ``cross derivative'' between $x$ and $\omega$, the derivative in $x$ being taken before the derivative in $\omega$. The dependence on $d(x,y)$ of the constant reflects the fact that we have the term $\mathcal{E}^x_n a_\eps(y,\cdot,\cdot)$ where the projection $\mathcal{E}^x_n$ is not centered on the first coordinate of $a_\eps$.
\end{rem}

\begin{rem}
Functions $a(x)$ depending only on $x$ do not belong to the class $\S$, but it can be checked that all the results of this article applying to symbols in $\S$ also apply to these functions. This is essentially because the associated operator $\Opd(a)$ (the operator of multiplication by $a$) has a kernel $k_a(x,y)$ vanishing when $x\neq y$, thus satisfying proposition \ref{rapid decay proposition} about the rapid decay of the kernel. We can therefore add these functions to the class without changing the statements of the results. In the same way we can add functions $a(x)$ satisfying only the lipschitz condition \eqref{varx lipschitz} to the class $\Sc$.
\end{rem}

\begin{defn}
Let $\epsilon > 0$. We will call $\epsilon$-\emph{negligible} an operator $A$ with a kernel $K_A(x,y)$ satisfying the following property: for every $N \in \N$ there exists a function $C_N(\epsilon)$ such that
 \begin{equation}
  |K_A(x,y)| \leq C_N(\epsilon) \frac{q^{-\frac{d(x,y)}{2}}}{(1+d(x,y))^N},
 \end{equation}
 and $C_N(\epsilon) \rightarrow 0$ when $\epsilon \rightarrow 0$.
 
 The set $\Psi$ of \emph{pseudo-differential operators} is defined as the set of operators $\Opd(c)$ associated with symbols $c \in \S(\T \times \T)$, modulo $\epsilon$-negligible operators.
\end{defn}

\subsection{Properties and examples}
The classes of symbols of definitions \ref{d:double} and \ref{d:symbol} are algebras over $\C$. Indeed the main difficulty is to prove the following proposition.
\begin{prop}\label{product closure}
 If $a,b \in \S$ then $ab \in \S$, and if $a,b \in \Sc$, then $ab \in \Sc$.
\end{prop}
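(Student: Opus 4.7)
The plan is to verify each defining clause for the pointwise product, using the Leibniz rule to reduce everything to the zeroth order in $s$, and repeatedly exploiting that $\mathcal{E}_n^x$ is an $L^\infty$-contraction which is a projection onto the functions constant on the depth-$n$ cylinders from $x$. The boundary-vanishing clause (2) is immediate from Leibniz since $\partial_s^k(ab) = \sum_j \binom{k}{j}\partial_s^j a\,\partial_s^{k-j}b$ and each factor vanishes at $s=0,\tau$, and the continuity of the $s$-derivatives of a product is standard.

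For the $\omega$-regularity clause (1) the key is that for any function $g(\omega)$ that is constant on those cylinders one has $\|f - \mathcal{E}_n^x f\|_\infty \leq 2\|f - g\|_\infty$. Applied with $f = ab$ and $g = (\mathcal{E}_n^x a)(\mathcal{E}_n^x b)$, together with the telescoping identity
\[
ab - (\mathcal{E}_n^x a)(\mathcal{E}_n^x b) = (a - \mathcal{E}_n^x a)\,b + (\mathcal{E}_n^x a)\,(b - \mathcal{E}_n^x b),
\]
and the uniform $L^\infty$ bounds on $a$ and $b$ coming from clause (1) itself at $n=0$, it yields $\|ab - \mathcal{E}_n^x(ab)\|_\infty \leq C_l/(1+n)^l$ for every $l$. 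Leibniz then propagates this to each $\partial_s^k(ab)$.

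For $\Sc$, the Lipschitz clause (3a) is another Leibniz expansion combined with the boundedness and Lipschitz-ness of the individual $\partial_s$-derivatives of $a$ and $b$, and is routine. The remaining clause (3b) is the main obstacle. My plan is to split
\[
ab - \mathcal{E}_n^x(ab) = \bigl(ab - (\mathcal{E}_n^x a)(\mathcal{E}_n^x b)\bigr) + \bigl((\mathcal{E}_n^x a)(\mathcal{E}_n^x b) - \mathcal{E}_n^x(ab)\bigr) =: G + H.
\]
For the increment $G(x,\omega) - G(y,\omega)$, use the telescoping decomposition above and, on each of the two resulting products $\phi\psi$, the identity $\phi(x)\psi(x) - \phi(y)\psi(y) = (\phi(x) - \phi(y))\psi(x) + \phi(y)(\psi(x) - \psi(y))$, combining clause (3b) for one factor with clause (3a) for the other. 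Note in particular that $|(\mathcal{E}_n^x a)(x,\omega,s) - (\mathcal{E}_n^x a)(y,\omega,s)| = |\mathcal{E}_n^x(a(x,\cdot,s) - a(y,\cdot,s))(\omega)| \leq C\eps d(x,y)$ by (3a) and the contraction of $\mathcal{E}_n^x$. The term $H$ is handled by the crucial observation that, since $(\mathcal{E}_n^x a)$ is cylinder-constant, $\mathcal{E}_n^x[(\mathcal{E}_n^x a)\,b] = (\mathcal{E}_n^x a)(\mathcal{E}_n^x b)$; this gives
\[
H = -\mathcal{E}_n^x\bigl[(a - \mathcal{E}_n^x a)\,b\bigr],
\]
so that the contraction of $\mathcal{E}_n^x$ reduces $|H(x,\omega) - H(y,\omega)|$ to exactly the kind of bound already obtained for the first piece of $G$. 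The hard part is precisely this reshuffling of $\mathcal{E}_n^x$ past a product, which forces the above conditional-expectation identity; once it is available the remaining estimates for (3b), and then for $\partial_s^k$ via Leibniz, are just combinations of clauses (1), (3a) and (3b) applied to $a$ and $b$ individually.
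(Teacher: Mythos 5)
Your proposal is correct and follows essentially the same route as the paper: your pulling-out identity $\mathcal{E}_n^x\bigl[(\mathcal{E}_n^x a)\,b\bigr]=(\mathcal{E}_n^x a)(\mathcal{E}_n^x b)$ is exactly the content of the paper's auxiliary lemma on $\mathcal{E}_n^x a\,\mathcal{E}_n^x b-\mathcal{E}_n^x(ab)$, and the rest (Leibniz in $s$, the product-increment identity, and the off-center estimate for $(a-\mathcal{E}_n^x a)(y,\cdot,\cdot)$ obtained by combining clauses (1) and (3b)) matches the paper's argument, merely packaged a bit more cleanly through your $G+H$ split and the inequality $\|f-\mathcal{E}_n^x f\|_\infty\le 2\|f-g\|_\infty$ for cylinder-constant $g$. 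One small correction: clause (1) at $n=0$ only controls $a$ minus its average over the boundary, so it does not give the uniform bounds $\|a\|_\infty,\|b\|_\infty<\infty$; these are implicit standing assumptions (the paper likewise uses $\|a\|_\infty$ and $\|b\|_\infty$ without comment), so invoke them directly rather than deriving them from clause (1).
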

\begin{proof}
 We will only prove the proposition for $a,b \in \Sc$, it will be clear that the case $a,b \in \S$ can be treated in a similar way. 
 
 The following lemma will be used several times throughout the proof
\begin{lem}\label{sublemma : product closure}
 For every function $a : \T \times \Omega \times [0,\tau] \rightarrow \C$ and $b : \T \times \Omega \times [0,\tau] \rightarrow \C$, 
 \[ \left| \mathcal{E}^x_n a  \mathcal{E}^x_n b -\mathcal{E}^x_n (ab))(y,\omega,s) \right|  \leq \sup_{\omega'} a(y,\omega',s) \sup_{\omega'} \left| \mathcal{E}^x_n b(y,\omega',s) - b(y,\omega',s) \right| \]
 for all $x,y \in \T$, and $(\omega,s) \in \Omega \times [0,\tau]$.
\end{lem}
\begin{proof}
 To simplify the notation, we can ignore the dependence in $x,y$ and $s$.
 
 Recall from $\eqref{e:mean operator}$ that
  \[ \mathcal{E}_n (ab)(\omega_0)  = \frac{1}{\nu(\Omega_n(o,\omega_0))} \int_{\Omega_n(o,\omega_0)} (ab)(\omega) d\nu(\omega). \]
 We also have
 \[ \mathcal{E}_n a(\omega_0) \mathcal{E}_n b(\omega_0)  = \frac{1}{\nu(\Omega_n(o,\omega_0))^2} \int_{\Omega_n(o,\omega_0)} \int_{\Omega_n(o,\omega_0)} a(\omega)b(\omega') d\nu(\omega)d\nu(\omega').\]
Let us write $dm(\omega) = \frac{d\nu(\omega)}{\nu(\Omega_n(o,\omega_0))}$, which gives a probability measure on $\Omega_n(o,\omega_0)$. We have
\begin{align*}
(\mathcal{E}_n a  \mathcal{E}_n b -\mathcal{E}_n (ab))(\omega_0)
  &=\int_{\Omega_n(o,\omega_0)} \int_{\Omega_n(o,\omega_0)} \left( a(\omega)b(\omega') - a(\omega)b(\omega) \right) dm(\omega')dm(\omega) \\
  &=\int_{\Omega_n(o,\omega_0)} \int_{\Omega_n(o,\omega_0)} a(\omega) ( b(\omega') - b(\omega) ) dm(\omega')dm(\omega) \\
  &=\int_{\Omega_n(o,\omega_0)}  a(\omega) \left( \int_{\Omega_n(o,\omega_0)} b(\omega') dm(\omega') - b(\omega) \right) dm(\omega) \\
  &=\int_{\Omega_n(o,\omega_0)}  a(\omega) \left( \mathcal{E}_n b(\omega_0) - b(\omega) \right) dm(\omega),
\end{align*}
but $\mathcal{E}_n b(\omega_0) = \mathcal{E}_n b(\omega)$ for all $\omega \in \Omega_n(o,\omega_0)$ by definition. So
\begin{align*}
 \left| (\mathcal{E}_n a  \mathcal{E}_n b -\mathcal{E}_n (ab))(\omega_0) \right| 
 & = \left| \int_{\Omega_n(o,\omega_0)}  a(\omega) \left( \mathcal{E}_n b(\omega) - b(\omega) \right) dm(\omega)\right| \\
 & \leq \sup_\omega a(\omega) \sup_\omega \left| \mathcal{E}_n b(\omega) - b(\omega) \right|.
\end{align*}
\end{proof}
Let us now begin the proof of proposition \ref{product closure}.
\begin{enumerate}
  \item  As $x$ and $s$ are fixed, we ignore the dependence in these variables to simplify the notation. We will put it back at the end. We decompose
 \[ ab - \mathcal{E}_n (ab) = a(b - \mathcal{E}_n b) + \mathcal{E}_n b(a - \mathcal{E}_n a) + ( (\mathcal{E}_n a) (\mathcal{E}_n b) -\mathcal{E}_n (ab)). \]
 The first two terms are easy to bound, we have
  \[ |ab - \mathcal{E}_n (ab)| \leq  \|a\|_\infty |b - \mathcal{E}_n b| + \|b\|_\infty |a - \mathcal{E}_n a| + |(\mathcal{E}_n a) (\mathcal{E}_n b) -\mathcal{E}_n (ab)|, \]
  because $ \|\mathcal{E}_n b\|_\infty \leq \|b\|_\infty$. For the last term, we use lemma \ref{sublemma : product closure}, which gives
   \[ \left| \mathcal{E}_n a  \mathcal{E}_n b -\mathcal{E}_n (ab))(\omega) \right|  
   \leq \sup_{\omega'} a(\omega') \sup_{\omega'} \left| \mathcal{E}_n b(\omega') - b(\omega') \right|. \]
We thus have finally, for all $(x,\omega,s)$
\[ |ab - \mathcal{E}^x_n (ab)|(x,\omega,s) \leq 2\|a\|_\infty \sup_{\omega'} \left| b - \mathcal{E}^x_n b \right|(x,\omega',s) + \|b\|_\infty \sup_{\omega'} |a - \mathcal{E}^x_n a|(x,\omega',s). \]
As $a$ and $b$ satisfy the inequality of condition $\ref{reg_omega}$, this proves that the product $ab$ satisfies it too. We still have to prove the inequality for the derivatives $\partial_s^k(ab)$. But for all $k \in \N$, $\partial_s^k(ab)$ is a linear combination of products of the form $\partial_s^i a \partial_s^{k-i} b$ with $i \in \{0,\ldots,k\}$ and each factor satisfies the inequality. The preceding proof thus gives the result.
 \item The second condition is clear if we write $\partial_s^k(ab)$ as a linear combination of products of the form $\partial_s^i a \partial_s^{k-i} b$ with $i \in \{0,\ldots,k\}$.
 \item We use the fact that, ignoring the dependence in $\omega$ and $s$, 
 \[ (ab)(x) - (ab)(y) = a(x)(b(x) - b(y)) + b(y)(a(x) - a(y)).\] 
 We then apply this to the derivatives of $(ab)(x,\omega,s)$ with respect to $s$ to obtain condition \ref{varx lipschitz}.

 To obtain condition \ref{varx cross derivative}, we will combine some of the preceding ideas. Let us ignore the dependence in $s$ and work only with $x$ and $\omega$.
 \begin{multline}\label{main expression product cross derivative}
  ((ab)(x,\omega) - (ab)(y,\omega)) - \mathcal{E}_n^x ((ab)(x,\omega) - (ab)(y,\omega)) \\
  = a (x,\omega)(b(x,\omega) - b(y,\omega)) + b(y,\omega)(a(x,\omega)- a(y,\omega)) \\
  - \mathcal{E}_n^x(a(x,\omega)(b(x,\omega) - b(y,\omega))) - \mathcal{E}_n^x (b(y,\omega)(a(x,\omega)- a(y,\omega)))
 \end{multline}
According to lemma \ref{sublemma : product closure}, we have
\[ \mathcal{E}_n^x(a(x,\omega)(b(x,\omega) - b(y,\omega))) =  \mathcal{E}_n^xa(x,\omega)\mathcal{E}_n^x(b(x,\omega) - b(y,\omega)) + R(x,y,\omega)\]
with
\[ |R(x,y,\omega)| \leq \sup_{\omega'} (b(x,\omega')- b(y,\omega')) \sup_{\omega'}(\mathcal{E}_n^x a(x,\omega') - a(x,\omega')).\]
 Using condition \ref{reg_omega} for $b$ and condition \ref{varx lipschitz} for $a$ we have
\begin{equation}\label{inequality R}
 |R(x,y,\omega)| \leq C_l \epsilon \frac{d(x,y)}{(1+n)^l}. 
\end{equation}

Still using lemma \ref{sublemma : product closure},
\[ \mathcal{E}_n^x(b(y,\omega)(a(x,\omega) - a(y,\omega))) =  \mathcal{E}_n^xb(y,\omega)\mathcal{E}_n^x(a(x,\omega) - a(y,\omega)) + R'(x,y,\omega)\]
where
\[ |R'(x,y,\omega)| \leq \sup_{\omega'}(a(x,\omega')- a(y,\omega')) \sup_{\omega'}(\mathcal{E}_n^x b(y,\omega') - b(y,\omega')). \]
We have for all $\omega'$ 
\begin{multline*}
 \mathcal{E}_n^x b(y,\omega') - b(y,\omega') \\
 = \mathcal{E}_n^x(b(y,\omega') - b(x,\omega')) - (b(y,\omega') - b(x,\omega')) + \mathcal{E}_n^x b(x,\omega') - b(x,\omega')
\end{multline*}
so according to conditions \ref{varx cross derivative} and \ref{reg_omega}:
\begin{equation}\label{reg omega xy}
  |  \mathcal{E}_n^x b(y,\omega') - b(y,\omega') | \leq \frac{1}{(1+n)^l} \big( C_l + \epsilon C_l(d(x,y)) \big)
\end{equation}
and using condition \ref{varx lipschitz} we finally have
\begin{align}
 |R'(x,y,\omega)| &\leq C_l \frac{1}{(1+n)^l} \big( \epsilon d(x,y) + 2\|a\|_\infty \epsilon C_l(d(x,y)) \big) \nonumber \\
 &\leq C_{a,l} \epsilon \frac{C'_l(d(x,y))}{(1+n)^l}. \label{inequality R'}
\end{align}
where $C'_l(t) \geq \max\{t,C_l(t)\}$ for all $x,y \in \T$.
Going back to the main expression $\eqref{main expression product cross derivative}$ we obtain after some more modifications
 \begin{align*}
  ((ab)&(x,\omega) - (ab)(y,\omega)) - \mathcal{E}_n^x ((ab)(x,\omega) - (ab)(y,\omega)) \\
  = &a(x,\omega) ( b(x,\omega) - b(y,\omega) - \mathcal{E}_n^x (b(x,\omega) - b(y,\omega)) \\
  & \quad + \mathcal{E}_n^x(b(x,\omega) - b(y,\omega))(a(x,\omega) - \mathcal{E}_n^xa(x,\omega)) \\
  & \qquad + R(x,y,\omega) \\
  &+ b(y,\omega)(a(x,\omega) - a(y,\omega) - \mathcal{E}_n^x(a(x,\omega) -a(y,\omega))) \\
  & \quad + \mathcal{E}_n^x(a(x,\omega) - a(y,\omega))(b(y,\omega) - \mathcal{E}_n^x b(y,\omega)) \\
  & \qquad + R'(x,y,\omega).
 \end{align*}
Using the different conditions in the symbol class and inequality $\eqref{reg omega xy}$ in addition to the inequalities $\eqref{inequality R}$ and $\eqref{inequality R'}$ on $R$ and $R'$ we obtain
\begin{align*}
   ((ab)&(x,\omega) - (ab)(y,\omega)) - \mathcal{E}_n^x ((ab)(x,\omega) - (ab)(y,\omega)) \\
   \leq &\|a\|_\infty C_l \epsilon \frac{C_l(d(x,y))}{(1+n)^l} \\
   & \quad + C \epsilon d(x,y)  \frac{C_l}{(1+n)^l}\\
   & \qquad + C_l \epsilon \frac{d(x,y)}{(1+n)^l} \\
   & + \|b\|_\infty C_l \epsilon \frac{C_l(d(x,y))}{(1+n)^l} \\
   & \quad + \frac{1}{(1+n)^l} ( C_l \epsilon d(x,y) + 2 \|a\|_\infty \epsilon C_l(d(x,y))) \\
   & \qquad + C_{a,l} \epsilon \frac{C'_l(d(x,y))}{(1+n)^l}\\
\end{align*}
and finally
\begin{equation*}
 ((ab)(x,\omega) - (ab)(y,\omega)) - \mathcal{E}_n^x ((ab)(x,\omega) - (ab)(y,\omega)) \leq \epsilon \frac{C''_l(d(x,y))}{(1+n)^l},
\end{equation*}
where $t \mapsto C''_l(t)$ is an increasing function such that for all $x,y \in \T$, 
\[ C''_l(d(x,y)) \geq \max\{d(x,y), C_l(d(x,y))\}. \]
This gives us condition \ref{varx cross derivative}.
\end{enumerate}
\end{proof}

We will now show that the symbol classes are also closed under the action of operators related to the dynamics on the tree. These properties are important for quantum ergodicity. They also allow us to give more elaborate examples of symbols. First, we need some definitions.

\begin{defn}\label{shift operator}
 Recall that any point $(x,\omega) \in \T \times \Omega$ can be written as a half-geodesic $(x,x_1,x_2,\ldots)$ starting at $x$. We define the \emph{shift} $\sigma : \T \times \Omega \rightarrow \T \times \Omega$
acting on these sequences
 \[ \sigma(x,\omega) = \sigma(x,x_1,x_2, \ldots) = (x_1,x_2, \ldots) = (x_1,\omega). \]
 We will also denote by $\sigma$ the map defined on $\T \times \Omega \times [0,\tau]$  such that $\sigma(x,\omega,s) = (x_1,\omega,s)$, and write $\sigma_\omega(x) = x_1$.
 
The map $\sigma$ has a left inverse, the \emph{transfer operator} $L$ defined on functions $a(x,\omega,s)$ by
 \[ La(x,\omega,s) = \frac{1}{q} \sum_{y: \sigma_\omega(y) = x} a(y,\omega,s). \]
\end{defn}

\begin{prop}\label{prop stab geodesic}
 Let $a \in \Sc$ be a semi-classical symbol. Its composition with the shift is still a semi-classical symbol, that is $a\circ \sigma \in \Sc$.
\end{prop}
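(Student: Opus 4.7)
I would verify the three conditions of Definition \ref{d:symbol} for $a \circ \sigma$ by reducing each of them to the corresponding condition for $a$. The crucial geometric observation is that $\sigma_\omega(x) = x_1$ depends only on the first edge of the half-geodesic $[x,\omega)$, so the map $\omega' \mapsto \sigma_{\omega'}(x)$ is constant on the cylinder $\Omega_n(x, \omega)$ as soon as $n \geq 1$. Combined with the identification $\Omega_n(x, \omega) = \Omega_{n-1}(x_1, \omega)$ and the Radon--Nikodym computation $\frac{d\nu_{x_1}}{d\nu_x}(\omega') = q^{h_{\omega'}(x_1) - h_{\omega'}(x)} = q$ on this cylinder, I would first establish the key identity
\[ \mathcal{E}_n^x(a\circ\sigma)(x, \omega, s) = \mathcal{E}_{n-1}^{x_1} a(x_1, \omega, s) \qquad (n \geq 1). \]

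Condition \ref{reg_omega} then follows at once, with constants inflated by at most $2^l$, by applying the analogous bound for $a$ at the point $x_1$ with parameter $n-1$; the same reduction works for each $s$-derivative, and the case $n=0$ is absorbed into the uniform bound on $a$. Condition \ref{bord} is trivial since $\partial_s^k(a \circ \sigma)(x, \omega, 0) = \partial_s^k a(x_1, \omega, 0) = 0$, and likewise at $\tau$. Condition \ref{varx lipschitz} reduces, via the triangle inequality $d(x_1, y_1) \leq d(x, y) + 2$, to the Lipschitz estimate \ref{varx lipschitz} for $a$ at $x_1$ and $y_1$.

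The substantive step is condition \ref{varx cross derivative}, which I would split into two regimes. When $n \geq d(x,y) + 1$, I would extend the geometric observation: once $n$ exceeds the distance from $y$ to the geodesic $[x, \omega)$, every $\omega' \in \Omega_n(x, \omega)$ yields the same first edge out of $y$ (all the geodesics $[y, \omega')$ merge with $[x, \omega)$ at a common vertex different from $y$), so $\sigma_{\omega'}(y) = y_1$ is also constant. The four-term difference then collapses to
\[ (a - \mathcal{E}_{n-1}^{x_1} a)(x_1, \omega, s) - (a - \mathcal{E}_{n-1}^{x_1} a)(y_1, \omega, s), \]
and condition \ref{varx cross derivative} for $a$ (with common center $x_1$) supplies a bound $\epsilon C_l(d(x_1,y_1))/n^l$, which is absorbed into $\epsilon \cdot 2^l C_l(d(x,y)+2)/(1+n)^l$.

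The main obstacle is the remaining regime $n \leq d(x, y)$, where the averaged term $\mathcal{E}_n^x(a\circ\sigma)(y, \omega, s)$ does not simplify because $\sigma_{\omega'}(y)$ genuinely varies with $\omega'$. Here I would fall back on a direct estimate: for any $\omega'$ one has $d(x_1, \sigma_{\omega'}(y)) \leq d(x,y)+2$, hence $a(\sigma_{\omega'}(y), \omega', s)$ is within $C\epsilon(d(x,y)+2)$ of $a(x_1, \omega', s)$ by condition \ref{varx lipschitz}, so the entire expression is $O(\epsilon d(x, y))$. Since $(1+n)^l \leq (1+d(x,y))^l$ in this range, the bound can be absorbed into $\epsilon C_l'(d(x,y))/(1+n)^l$ by choosing $C_l'(t)$ to grow like $t(1+t)^l$. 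Combining both regimes by taking the maximum of the two resulting functions yields a single increasing $C_l$ of $d(x,y)$ alone, independent of $n$, which completes the verification.
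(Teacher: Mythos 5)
Your proposal is correct and follows essentially the same route as the paper: the identity $\mathcal{E}_n^x(a\circ\sigma)(x,\omega,s)=\mathcal{E}_{n-1}^{x_1}a(x_1,\omega,s)$, the split of the cross-derivative condition at $n \geq d(x,y)+1$ (where $\sigma_{\omega'}(y)$ is constant on the cylinder and the expression collapses to the corresponding one for $a$ centred at $x_1$), and a crude Lipschitz bound absorbed via $(1+n)^l \leq (2+d(x,y))^l$ in the near regime. The only slip is that the key identity needs $n\geq 2$ rather than $n\geq 1$ (for $n=1$ the sets $\Omega_1(x,\omega)$ and $\Omega_0(x_1,\omega)=\Omega$ differ), but this is harmless: the cases $n\leq 1$ are absorbed into the uniform bound exactly as you do for $n=0$, and in the cross-derivative regime $n\geq d(x,y)+1$ one has $n\geq 2$ whenever $x\neq y$.
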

\begin{rem}
 It will be clear from the following proof that $\S$ is also closed under composition by the shift. 
\end{rem}
\begin{proof}
 As $\sigma$ does not act on the variable $s$, condition \ref{bord} of definition \ref{d:symbol} is satisfied by $a \circ \sigma$. 
 
 Let us now ignore the dependence in $s$, fix $\omega \in \Omega$, $x\in\T$ and write $\sigma(x,\omega) = (x_1,\omega)$.
 When $n \geq 2$, 
 \[ |a \circ \sigma(x,\omega) - \mathcal{E}_n^x (a \circ \sigma)(x,\omega)| = |a(x_1,\omega) - \mathcal{E}_{n-1}^{x_1} a(x_1,\omega)| \leq C_l \frac{1}{n^l} \leq 2C_l \frac{1}{(1+n)^l}\]
 so condition \ref{reg_omega} is also satisfied by $a\circ\sigma$.
 
 Now fix $y\in\T$ and write $\sigma(y,\omega) = (y_1,\omega)$. Notice that $d(x_1,y_1) \leq d(x,y)$, so
 \[ |a\circ\sigma(x,\omega) - a\circ\sigma(y,\omega)| = |a(x_1,\omega) - a(y_1,\omega)| \leq C \epsilon d(x_1,y_1) \leq C \epsilon d(x,y)  \]
 and this is also true for the derivatives of $a$ with respect to $s$ when we add back the dependence in this variable, so condition \ref{varx lipschitz} is satisfied. 
 
 Let us now prove that condition \ref{varx cross derivative} is satisfied. That is: for every $l,n \in \N$, $x,y \in \T$ and $\omega \in \Omega$, 
\[  |a\circ\sigma(x,\omega) - a\circ\sigma(y,\omega) - \mathcal{E}_n^x( a\circ\sigma(x,\omega) - a\circ\sigma(y,\omega))| \leq \frac{C_l(d(x,y))}{(1+n)^l} \epsilon, \] for some function $t \mapsto C_l(t)$.
 
 We have for all $n \geq 2$
 \[ \mathcal{E}_n^x( a\circ\sigma(x,\omega)) = \mathcal{E}_{n-1}^{x_1}( a(x_1,\omega)).\]
 Recall now from $\eqref{e:mean operator}$ that
\[ \mathcal{E}_n^x (a \circ \sigma) (y,\omega) = \frac{1}{\nu_x(\Omega_n(x,\omega))} \int_{\Omega_n(x,\omega)} a \circ \sigma(y,\omega') d\nu_x(\omega'), \]
where $\Omega_n(x,\omega)$ is the cylinder starting at $x$ of length $n$ in the direction of $\omega$.
 If $n \geq d(x,y) + 1$ then for all $\omega' \in \Omega_n(x,\omega)$, $\sigma(y,\omega') = (y_1,\omega')$, thus
 \[ \mathcal{E}_n^x(a\circ\sigma(y,\omega)) = \mathcal{E}_{n-1}^{x_1}(a(y_1,\omega)).\]
 Therefore
 \[ \mathcal{E}_n^x( a\circ\sigma(x,\omega) - a\circ\sigma(y,\omega)) 
    = \mathcal{E}_{n-1}^{x_1}( a(x_1,\omega) - a(y_1,\omega)).\]
 Using the fact that $a \in \Sc$ satisfies condition \ref{varx cross derivative}, we have
\begin{align*}
 |a\circ\sigma & (x,\omega) - a\circ\sigma(y,\omega) - \mathcal{E}_n^x( a\circ\sigma(x,\omega) - a\circ\sigma(y,\omega))| \\
    &= |a(x_1,\omega) - a(y_1,\omega) -\mathcal{E}_{n-1}^{x_1}( a(x_1,\omega) - a(y_1,\omega))| \\
    &\leq \frac{C_l(d(x_1,y_1))}{n^l} \epsilon.
\end{align*}
Therefore we have 
\begin{equation}\label{eq stab geodesic 1}
  |a\circ\sigma(x,\omega) - a\circ\sigma(y,\omega) - \mathcal{E}_n^x( a\circ\sigma(x,\omega) - a\circ\sigma(y,\omega))| \leq \frac{C'_l(d(x,y))}{(1+n)^l} \epsilon,
\end{equation}
 and condition \ref{varx cross derivative} is thus satisfied when $n \geq 1 + d(x,y)$.

 Let us now consider the case $n \leq 1 + d(x,y)$. Recall that
 \[\mathcal{E}_n^x (a\circ\sigma(x,\omega) - a\circ\sigma(y,\omega)) = \frac{1}{\nu(\Omega_n(x,\omega))} \int_{\Omega_n(x,\omega)} a\circ\sigma(x,\omega') - a\circ\sigma(y,\omega') d\nu_x(\omega'). \]
 We have to estimate
 \begin{multline}\label{eq stab geodesic 2}
  a\circ\sigma(x,\omega) - a\circ\sigma(y,\omega) - \mathcal{E}_n^x( a\circ\sigma(x,\omega) - a\circ\sigma(y,\omega)) \\
  = \frac{1}{\nu(\Omega_n(x,\omega))} \int_{\Omega_n(x,\omega)} a\circ\sigma(x,\omega) - a\circ\sigma(x,\omega') - (a\circ\sigma(y,\omega) - a\circ\sigma(y,\omega')) d\nu_x(\omega').
 \end{multline}
Note that $a\circ\sigma(x,\omega) = a(x_1,\omega)$ and $a\circ\sigma(x,\omega') = a(x'_1,\omega')$ with $d(x_1,x'_1) \leq 2$.  We thus have
\[ |a\circ\sigma(x,\omega) - a\circ\sigma(x,\omega')| \leq 2C \epsilon\]
and the same is true when we replace $x$ with $y$. Therefore \eqref{eq stab geodesic 2} gives
\[ |a\circ\sigma(x,\omega) - a\circ\sigma(y,\omega) - \mathcal{E}_n^x( a\circ\sigma(x,\omega) - a\circ\sigma(y,\omega))| \leq 4C \epsilon.  \]
Moreover, as $n \leq 1 + d(x,y)$ we have $ (1+n) \leq (2+d(x,y))$. Thus for every $l \in \N$
\[ (1+n)^l|a\circ\sigma(x,\omega) - a\circ\sigma(y,\omega) - \mathcal{E}_n^x( a\circ\sigma(x,\omega) - a\circ\sigma(y,\omega))| \leq (2+d(x,y))^l 4 C \epsilon,  \]
in this case.
Together with $\eqref{eq stab geodesic 1}$ it gives for every $l,n \in \N$, $x,y \in \T$ and $\omega \in \Omega$, 
\[  |a\circ\sigma(x,\omega) - a\circ\sigma(y,\omega) - \mathcal{E}_n^x( a\circ\sigma(x,\omega) - a\circ\sigma(y,\omega))| \leq \frac{C_l(d(x,y))}{(1+n)^l} \epsilon, \] for some function $C_l(d(x,y))$ of $d(x,y)$ that grows at least like $d(x,y)^l$. This is compatible with condition \ref{varx cross derivative}.
\end{proof}

\begin{rem}
 From the preceding proof, we see that if $a \in \Sc$, then the symbol $a\circ\sigma \in \Sc$ will satisfy condition \ref{varx cross derivative} of definition \ref{d:symbol}  with for all $l\in \N$, $C_l(t) \geq t^l$. 
\end{rem}

\paragraph*{}
A similar proposition can be proved for the transfer operator $L$.
\begin{prop}\label{prop stab transfer}
 Let $a \in \Sc$ be a semi-classical symbol. Then $La \in \Sc$.
\end{prop}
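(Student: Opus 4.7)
The plan is to reduce the statement to a combination of Proposition \ref{prop stab geodesic} and the stability of $\Sc$ under the simple neighbor-averaging operator $M$. Since the $q$ preimages of $x$ under $\sigma_\omega$ are exactly the $q$ neighbors of $x$ different from $\sigma_\omega(x)$, one has the identity
\[ La(x,\omega,s) \;=\; \tfrac{q+1}{q}\,Ma(x,\omega,s) \;-\; \tfrac{1}{q}\,(a\circ\sigma)(x,\omega,s), \]
where $Ma(x,\omega,s) := \tfrac{1}{q+1}\sum_{y\sim x} a(y,\omega,s)$ averages over the $q+1$ neighbors of $x$. The four defining conditions of $\Sc$ are linear in $a$, so $\Sc$ is a vector space; combined with $a\circ\sigma \in \Sc$ given by Proposition \ref{prop stab geodesic}, the proof reduces to showing $Ma\in\Sc$. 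The advantage of $M$ over $L$ is that the summation set $\{y:y\sim x\}$ is independent of $\omega$, so $\mathcal{E}_n^x$ commutes with $\sum_{y\sim x}$ for every $n\in\N$, which is the structural fact exploited in every step below.

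Conditions \ref{bord} and \ref{reg_omega} for $Ma$ are then direct. The former is immediate since $M$ commutes with $\partial_s$. For the latter, exchanging $\mathcal{E}_n^x$ with the neighbor sum gives
\[ (Ma - \mathcal{E}_n^x Ma)(x,\omega,s) = \tfrac{1}{q+1}\sum_{y\sim x}(a - \mathcal{E}_n^x a)(y,\omega,s), \]
and each summand is bounded by the triangle inequality combining condition \ref{reg_omega} for $a$ at $x$ with condition \ref{varx cross derivative} for $a$ at the pair $(x,y)$ (where $d(x,y)=1$); the same estimate propagates to every $\partial_s^k$. For condition \ref{varx lipschitz}, I would fix any bijection $\phi:N(x)\to N(z)$ between the $q+1$ neighbors of $x$ and of $z$: paired vertices satisfy $d(y,\phi(y))\le d(x,z)+2$, so the Lipschitz property of $a$ yields $|Ma(x,\omega,s) - Ma(z,\omega,s)| \le C\epsilon(d(x,z)+2) \le 3C\epsilon\,d(x,z)$ when $x\neq z$.

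The step that at first sight looks like the main obstacle is the cross-derivative condition \ref{varx cross derivative}, but it simplifies substantially thanks to the fact that the mean operator appearing in that condition is already based at $x$. Pulling $\mathcal{E}_n^x$ inside both neighbor sums and pairing via $\phi$ gives
\[ (Ma - \mathcal{E}_n^x Ma)(x) - (Ma - \mathcal{E}_n^x Ma)(z) = \tfrac{1}{q+1}\sum_{y\sim x}\Big[(a - \mathcal{E}_n^x a)(y,\omega,s) - (a - \mathcal{E}_n^x a)(\phi(y),\omega,s)\Big]. \]
I would then use the triangle inequality through the midpoint $x$ and apply condition \ref{varx cross derivative} for $a$ at $(x,y)$ and at $(x,\phi(y))$, obtaining
\[ \big|(a - \mathcal{E}_n^x a)(y,\omega,s) - (a - \mathcal{E}_n^x a)(\phi(y),\omega,s)\big| \;\le\; \frac{\epsilon\,C_l(1)}{(1+n)^l} + \frac{\epsilon\,C_l(d(x,z)+1)}{(1+n)^l}. \]
Summing over the $q+1$ pairs yields the required bound $\epsilon\,C'_l(d(x,z))/(1+n)^l$ with $C'_l(t):=2C_l(t+1)$, and the same argument works for each derivative $\partial_s^k$. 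Hence no change-of-basepoint argument is needed, which is really what makes this reduction via $M$ preferable to attacking $L$ head-on.
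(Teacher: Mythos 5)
Your proof is correct, but it takes a genuinely different route from the paper. The paper attacks $L$ head-on: because the preimage set $\{x':\sigma_{\omega'}(x')=x\}$ depends on $\omega'$ inside the averaging integral, it must split into the regimes $n\geq d(x,y)+1$ (where the set stabilizes and one can shift basepoints via $\mathcal{E}_n^x a(x',\omega)=\mathcal{E}_{n+1}^{x'}a(x',\omega)$) and $n\leq d(x,y)+1$ (where a crude $4C\epsilon$ bound is traded against $(1+n)^l\leq(2+d(x,y))^l$), exactly as in the proof for $a\circ\sigma$. You instead write $La=\tfrac{q+1}{q}Ma-\tfrac1q\,a\circ\sigma$ with $M$ the full neighbor average, use linearity of $\Sc$ and Proposition \ref{prop stab geodesic} for the shift, and exploit that the summation set of $M$ is $\omega$-independent, so $\mathcal{E}_n^x$ commutes with it; this eliminates the case analysis and the basepoint changes entirely, and all estimates reduce to conditions \ref{reg_omega}--\ref{varx cross derivative} for $a$ at pairs at distance $1$ or $d(x,z)+1$ (your use of $C_l(d(x,z)+1)$ and $C_l(1)\leq C_l(d(x,z)+1)$ is legitimate since the definition allows taking $C_l$ increasing). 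The trade-off is that your verification of condition \ref{reg_omega} for $Ma$ consumes the cross-derivative condition \ref{varx cross derivative} of $a$ (to pass from the basepoint $x$ to the neighbor $y$), whereas the paper's basepoint-shift argument uses only condition \ref{reg_omega} of $a$ and therefore also yields the remark that the larger class $\S$ is closed under $L$; your reduction does not immediately give that remark, but it is not part of the stated proposition, so your argument fully proves the claim about $\Sc$, with constants ($3C$, $2C_l(t+1)$, $C_l+\epsilon C_l(1)$) of the same quality as the paper's.
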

\begin{rem}
 As in proposition \ref{prop stab geodesic}, $\S$ is also closed under the action of $L$. 
\end{rem}
\begin{proof}
 We follow the same steps as in the proof of proposition \ref{product closure}. But here, $L$ gives us more regularity in $\omega$ and less in $x$.
 
 Recall from definition \ref{shift operator} that
 \[ La(x,\omega,s) = \frac{1}{q} \sum_{x':\sigma_\omega(x') = x} a(x',\omega,s). \]
 As $L$ does not act on the variable $s$, condition \ref{bord} of definition \ref{d:symbol} is satisfied by $La$. Let us now ignore the dependence in $s$, fix $\omega \in \Omega$ and $x\in\T$, and prove that condition \ref{reg_omega} is satisfied.
 
 Using the definition $\eqref{e:mean operator}$ of the operator $\mathcal{E}^x_n$ on $La$ we have
\[ \mathcal{E}_n^x (La) (x,\omega) = \frac{1}{\nu_x(\Omega_n(x,\omega))} \frac{1}{q}  \int_{\Omega_n(x,\omega)} \sum_{x':\sigma_{\omega'}(x') = x}  a(x',\omega') d\nu_x(\omega'). \]
If $n \geq 1$, then $\sigma_{\omega'}(x') = \sigma_\omega(x')$ for every $\omega' \in \Omega_n(x,\omega)$. So we have
\begin{align}
  \mathcal{E}_n^x (La)(x,\omega) &= \frac{1}{q} \sum_{x':\sigma_{\omega}(x') = x} \frac{1}{\nu_x(\Omega_n(x,\omega))} \int_{\Omega_n(x,\omega)} a(x',\omega') d\nu_x(\omega') \nonumber \\
  &= \frac{1}{q} \sum_{x':\sigma_\omega(x') = x} \mathcal{E}_n^x a(x',\omega) \label{eq stab transfer 1} \\
  &= \frac{1}{q} \sum_{x':\sigma_\omega(x') = x} \mathcal{E}_{n+1}^{x'} a(x',\omega) \nonumber
\end{align}
 which can be used to bound the following expression 
 \begin{align*}
  |La(x,\omega) - \mathcal{E}_n^x (La)(x,\omega)| &= \left| \frac{1}{q} \sum_{x':\sigma_\omega(x') = x}  (a(x',\omega) - \mathcal{E}_{n+1}^{x'} a(x',\omega)) \right| \\
  &\leq C_l \frac{1}{(2+n)^l} \leq C_l \frac{1}{(1+n)^l},
 \end{align*}
 because $a$ satisfies condition \ref{reg_omega}. Therefore condition \ref{reg_omega} is also satisfied by $La$.
 
 For condition \ref{varx lipschitz}, fix $y\in\T$. If $x',y' \in \T$ are such that $\sigma_\omega(y')=y$ and $\sigma_\omega(x') = x$, then $d(x',y') \leq d(x,y)+2$. So 
 \begin{align}
  |La(x,\omega) - La(y,\omega)| &= \frac{1}{q} \left|\sum_{x':\sigma_\omega(x') = x} a(x',\omega) - \sum_{y':\sigma_\omega(y') = y} a(y',\omega) \right| \nonumber\\
  & \leq C \epsilon (d(x,y) +2) \label{eq stab transfer 2}
 \end{align}
 in whatever way we regroup the terms of the two sums in $q$ differences, and using the fact that $a$ satisfies condition \ref{varx lipschitz}. Notice then that if $d(x,y) \geq 1$ then we get from $\eqref{eq stab transfer 2}$ that \[ |La(x,\omega) - La(y,\omega)| \leq 3C \epsilon d(x,y),\] and this inequality is also true when $d(x,y) =0$, so it is true for all $x,y \in \T$  and condition \ref{varx lipschitz} is satisfied. 
 
  Let us now prove that condition \ref{varx cross derivative} is satisfied. That is for every $l,n \in \N$, $x,y \in \T$ and $\omega \in \Omega$, 
\[  |La(x,\omega) - La(y,\omega) - \mathcal{E}_n^x( La(x,\omega) - La(y,\omega))| \leq \frac{C_l(d(x,y))}{(1+n)^l} \epsilon, \] for some function $t \mapsto C_l(t)$. 

Recall that
\[ \mathcal{E}_n^x (La) (y,\omega) = \frac{1}{\nu_x(\Omega_n(x,\omega))} \frac{1}{q}  \int_{\Omega_n(x,\omega)} \sum_{y':\sigma_{\omega'}(y') = y}  a(y',\omega') d\nu_x(\omega'). \]
 If $n \geq d(x,y) + 1$ then $\{y' : \sigma_{\omega'}(y') = y \} = \{y' : \sigma_{\omega}(y') = y \}$ for every $\omega' \in \Omega_n(x,\omega)$. So
 \begin{align*}
  \mathcal{E}_n^x (La) (y,\omega) &= \frac{1}{q}  \sum_{y':\sigma_{\omega}(y') = y}  \frac{1}{\nu_x(\Omega_n(x,\omega))}  \int_{\Omega_n(x,\omega)} a(y',\omega') d\nu_x(\omega') \\
  &= \frac{1}{q}  \sum_{y':\sigma_{\omega}(y') = y} \mathcal{E}_{n}^x a(y',\omega),
 \end{align*}
 and recall from equality $\eqref{eq stab transfer 1}$ at the beginning of the proof that 
 \[\mathcal{E}_n^x (La)(x,\omega)= \frac{1}{q} \sum_{x':\sigma_\omega(x') = x} \mathcal{E}_{n}^{x} a(x',\omega)\]
For every $x',y' \in \T$ such that $\sigma_\omega(x') = x$ and $\sigma_\omega(y')= y$, we have
\begin{align*}
 |a(x',&\omega) - a(y',\omega) - \mathcal{E}_n^x( a(x',\omega) - a(y',\omega))| \\
    &= |a(x',\omega) - a(y',\omega) -\mathcal{E}_{n+1}^{x'}( a(x',\omega) - a(y',\omega))| \\
    &\leq \frac{C_l(d(x',y'))}{(2+n)^l} \epsilon,
\end{align*}
 using for the last inequality the fact that $a \in \Sc$ satisfies condition \ref{varx cross derivative}.
Therefore when we sum over pairs $(x',y')$ formed by grouping arbitrarily the elements of the two sets $\{x' : \sigma_{\omega}(x') = x \} $ and $\{y' : \sigma_{\omega}(y') = y \} $, and divide by $q$, we obtain 
\begin{equation}\label{eq stab transfer 3}
  |La(x,\omega) - La(y,\omega) - \mathcal{E}_n^x( La(x,\omega) - La(y,\omega))| \leq \frac{C'_l(d(x,y))}{(1+n)^l} \epsilon,
\end{equation}
 and condition \ref{varx cross derivative} is thus satisfied when $n \geq 1 + d(x,y)$.

 Let us now consider the case $n \leq 1 + d(x,y)$. The end of the proof is the same as that of proposition \ref{prop stab geodesic}. We just have to replace the composition with the shift with the transfer operator. Recall that for each $n \leq 1 + d(x,y)$,
 \[\mathcal{E}_n^x (La(x,\omega) - La(y,\omega)) = \frac{1}{\nu(\Omega_n(x,\omega))} \int_{\Omega_n(x,\omega)} La(x,\omega') - La(y,\omega') d\nu_x(\omega'). \]
 We have to estimate
 \begin{multline}\label{eq stab transfer 4}
  La(x,\omega) - La(y,\omega) - \mathcal{E}_n^x( La(x,\omega) - La(y,\omega)) \\
  = \frac{1}{\nu(\Omega_n(x,\omega))} \int_{\Omega_n(x,\omega)} La(x,\omega) - La(x,\omega') - (La(y,\omega) - La(y,\omega')) d\nu_x(\omega').
 \end{multline}
Recall that 
\[ La(x,\omega) = \frac{1}{q} \sum_{x':\sigma_\omega(x') = x} a(x',\omega) \quad \text{ and } \quad La(x,\omega') = \frac{1}{q} \sum_{x'':\sigma_{\omega'}(x'') = x} a(x'',\omega'). \]
For each $x',x'' \in \T$ such that $\sigma_{\omega}(x') = x$ and $\sigma_{\omega'}(x'') = x$, we have $d(x',x'') \leq 2$.  We thus have
\[ |La(x,\omega) - La(x,\omega')| \leq 2C \epsilon\]
and the same is true when we replace $x$ with $y$. Therefore \eqref{eq stab transfer 4} gives
\[ |La(x,\omega) - La(y,\omega) - \mathcal{E}_n^x( La(x,\omega) - La(y,\omega))| \leq 4C \epsilon.  \]
Moreover, as $n \leq 1 + d(x,y)$ we have $ (1+n) \leq (2+d(x,y))$. Thus for every $l \in \N$
\[ (1+n)^l| La(x,\omega) - La(y,\omega) - \mathcal{E}_n^x( La(x,\omega) - La(y,\omega))| \leq (2+d(x,y))^l 4 C \epsilon,  \]
in this case.
Together with $\eqref{eq stab transfer 3}$ it gives for every $l,n \in \N$, $x,y \in \T$ and $\omega \in \Omega$, 
\[  |La(x,\omega) - La(y,\omega) - \mathcal{E}_n^x( La(x,\omega) - La(y,\omega))| \leq \frac{C_l(d(x,y))}{(1+n)^l} \epsilon, \] for some function $C_l(d(x,y))$ of $d(x,y)$ that grows at least like $d(x,y)^l$. This is compatible with condition \ref{varx cross derivative}.
\end{proof}

 Using the previous properties, we will now give some natural and nontrivial examples of symbols belonging to our symbol classes.

\begin{exmp}\label{ex:main}
 Condition \ref{reg_omega} tells us that the symbol must not vary too much with respect to $\omega$. Recall that a pair $(x,\omega)$ is equivalent to a half-geodesic starting at $x$ : $[x,\omega) = (x, x_1, x_2, \ldots)$. We can therefore write equivalently $a(x,\omega,s)$ or $a(x, x_1, x_2, \ldots,s)$. Asking that the symbol depend only on a finite number of coordinates --- more precisely that there exists $n \in \N$ such that for every $(x_1,\ldots,x_n) \in \T^n$, 
 \[ a(x_1,\ldots, x_n, x_{n+1}, x_{n+2} \ldots,s) = a(x_1,\ldots, x_n, x'_{n+1}, x'_{n+2}, \ldots,s)\] for all half geodesics $(x_n, x_{n+1}, x_{n+2} \ldots)$ and $(x_n, x'_{n+1}, x'_{n+2} \ldots)$ --- is a simple way of satisfying condition \ref{reg_omega}. Indeed, if $a$ depends only on $n_0$ coordinates, then 
 \[ \sup_x \| a(x,\cdot,\cdot) - \mathcal{E}_n^x a(x,\cdot,\cdot) \|_\infty = 0, \]
 for all $n \geq n_0$. Incidentally, condition \ref{varx cross derivative} is then also satisfied. But it is not clear at first sight if this can be easily coupled with condition \ref{varx lipschitz}, asking that the symbols do not vary too much with respect to $x$, for symbols depending on more than one coordinate.
 
 A way to get a symbol satisfying all the conditions and depending on more than one coordinate is to start from a function $a(x,s)$ satisfying conditions \ref{bord} and \ref{varx lipschitz}. As $a$ does not depend on $\omega$, it automatically satisfies condition \ref{reg_omega} and \ref{varx cross derivative} and therefore belongs to $\Sc$. Then for every $k \in \N$, $a\circ \sigma^k$ is still a symbol in $\Sc$, according to proposition \ref{prop stab geodesic}, and it depends on $k$ coordinates. This is a fundamental example of symbol for quantum ergodicity.
\end{exmp}

\section{Rapid decay of the kernel}\label{s:decay}
Many proofs of pseudo-differential calculus theorems will relie on the following result about the rapid decay of the kernel away from the diagonal.

\begin{prop}\label{rapid decay proposition}
The kernel of the operator $\Opd(a)$ associated with a double symbol $a \in \S(\T \times \T)$, defined by
\[ K_a(x,y) = \int_{\Omega} \int_0^\tau q^{(\frac{1}{2}+is)(h_\omega(y) - h_\omega(x))} a(x,y,\omega,s)  d\nu_x(\omega)  d\mu(s), \]
has the following property: for all $N \in \mathbb{N}$ there exists a constant $C_a(N)$ such that
 \[  |K_a(x,y)| \leq C_a(N) \frac{q^{-\frac{d(x,y)}{2}}}{(1+d(x,y))^N}\]
for every  $x,y \in \T$. The dependence on $a$ of the constant $C_a(N)$ is given by
\[ C_a(N) = C_N \left( \|a\|_{\Omega,N} + \sum_{k=0}^{N+1} \|\partial_s^k a \|_\infty \right), \]
where
 \[ \|a\|_{\Omega,N} = \sup_{x,y \in \T} \sup_{n \in \N} (n+1)^N \| (a - \mathcal{E}^x_n a)(x,y, \cdot, \cdot) \|_\infty \]
 is the constant of condition \ref{double reg omega} of definition \ref{d:double} with $l=N$.
\end{prop}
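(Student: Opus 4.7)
The argument is an adaptation of Cowling--Setti's proof of Theorem~\ref{t:Paley Wiener 2} to the present setting. After a change of reference point to $x$, the kernel $K_a(x,y)$ is essentially the partial inverse Fourier--Helgason transform of $(\omega,s)\mapsto a(x,y,\omega,s)$ evaluated at $y$, and the desired rapid decay in $d(x,y)$ is precisely the Paley--Wiener conclusion. The $\omega$-smoothness (condition~\ref{double reg omega}) and the $s$-smoothness together with the boundary vanishing~\ref{bord} (replacing the symmetry condition~\eqref{e:symmetry}) play the role that $F \in C^\infty(\Omega\times\mathbb{T})^\flat$ plays in Theorem~\ref{t:Paley Wiener 2}.

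Concretely, I would first change reference point to $x$ and decompose $\Omega = \bigsqcup_{i=0}^{d(x,y)} E_i^x(y)$. By~\eqref{e:confluence}, $h_\omega(y)-h_\omega(x) = 2i-d(x,y)$ is constant on each $E_i^x(y)$, giving
\[
K_a(x,y) = q^{-d(x,y)/2}\sum_{i=0}^{d(x,y)} q^i \int_{E_i^x(y)} \int_0^\tau e^{is(2i-d(x,y))\log q}\, a(x,y,\omega,s)\, d\mu(s)\, d\nu_x(\omega).
\]
The bound $\nu_x(E_i^x(y)) \leq C q^{-i}$ absorbs the $q^i$ factor and yields the uniform prefactor $q^{-d(x,y)/2}$; it remains to gain $(1+d(x,y))^{-N}$. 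For indices $i$ with $|2i-d(x,y)|$ of order $d(x,y)$, I would integrate by parts $N+1$ times in $s$ in the inner integral, picking up a factor $|2i-d(x,y)|^{-(N+1)} \leq C(1+d(x,y))^{-(N+1)}$. All boundary terms at $s=0,\tau$ vanish thanks to condition~\ref{bord} combined with the smoothness of $|\mathbf{c}(s)|^{-2}$ from~\eqref{c(s)^-2}, which explains the presence of $\sum_{k=0}^{N+1}\|\partial_s^k a\|_\infty$ in $C_a(N)$.

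The main obstacle is the ``low-frequency'' regime $i\approx d(x,y)/2$, where $s$-integration by parts yields no gain and a naive term-by-term bound only produces $|K_a(x,y)|\leq Cq^{-d(x,y)/2}$. Here the $\omega$-regularity must be invoked through the averaging operator $\mathcal{E}_n^x$ at scale $n \sim d(x,y)$. Writing $a = \mathcal{E}_n^x a + (a-\mathcal{E}_n^x a)$, the remainder contributes at most $C\|a\|_{\Omega,N}\, q^{-d(x,y)/2}/(1+d(x,y))^{N-1}$ via the crude bound $\int_\Omega q^{(h_\omega(y)-h_\omega(x))/2}\, d\nu_x(\omega) = \phi_0(d(x,y)) \leq C(1+d(x,y))q^{-d(x,y)/2}$, which accounts for the $\|a\|_{\Omega,N}$ in $C_a(N)$. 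For the principal part $\mathcal{E}_n^x a$, which is piecewise constant on $\Omega_n$-cylinders, one integrates in $\omega$ first: the weighted sum over cylinders reconstructs a ``restricted spherical function'' carrying coherent oscillation at the dominant frequencies $\pm d(x,y)\log q$, visible in the formula $\phi_s(d(x,y)) = q^{-d(x,y)/2}[\mathbf{c}(s)q^{isd(x,y)} + \mathbf{c}(-s)q^{-isd(x,y)}]$, after which a further integration by parts in $s$ yields the missing $(1+d(x,y))^{-N}$ factor. The key point is that the $\omega$-regularity and the $s$-integration by parts have to be combined rather than applied in parallel, since a naive bound $i$ by $i$ destroys the coherent cancellation visible in the spherical function.
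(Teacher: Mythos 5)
Your setup (changing the reference point to $x$, decomposing $\Omega$ into the sets $E_i^x(y)$ of \eqref{e:decomposition omega} on which $h_\omega(y)-h_\omega(x)=2i-d(x,y)$, integrating by parts in $s$ with boundary terms killed by the vanishing condition, and estimating $a-\mathcal{E}_n^x a$ by condition \ref{double reg omega}) is the same as the paper's, and you correctly isolate the real difficulty: the indices $i\approx d(x,y)/2$, where $|2i-d(x,y)|$ is small and integration by parts in $s$ gains nothing. But your treatment of that regime has a genuine gap. You average at scale $n\sim d(x,y)$ and assert that the cylinder-wise constant function $\mathcal{E}_n^x a$, integrated in $\omega$, ``reconstructs a restricted spherical function'' oscillating at the frequencies $\pm d(x,y)\log q$. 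This is not justified and, as stated, is not true: for $j$ in the middle range, the set $E_j^x(y)$ is a union of many cylinders of length $n\sim d(x,y)$, so $\mathcal{E}_n^x a$ still genuinely varies with $\omega$ on precisely the sets that cause the trouble, and the $\omega$-integral does not collapse to anything of the form $\mathbf{c}(s)q^{isd(x,y)}+\mathbf{c}(-s)q^{-isd(x,y)}$. The two pure high frequencies in $\phi_s$ come from summing the complete geometric series over \emph{all} $j$ from $0$ to $d(x,y)$ with an amplitude independent of $\omega$; if you restrict to the middle $j$'s, the partial sum has endpoint frequencies $2j-d(x,y)$ which are themselves small, and if the amplitude varies from cylinder to cylinder the cancellation disappears altogether. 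Either way you are left with the trivial bound $(1+d(x,y))q^{-d(x,y)/2}$, with no mechanism producing the factor $(1+d(x,y))^{-N}$.

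The missing idea, which is how the paper proceeds, is to average at a scale that is a fixed fraction strictly below $1/2$ of the distance, namely $M=\lfloor d(x,y)/3\rfloor$. Then $\mathcal{E}_M^x a(x,y,\cdot,s)$ is constant in $\omega$ on the whole region $\bigcup_{j\geq M}E_j^x(y)$ (for such $\omega$ the length-$M$ cylinder at $x$ containing $\omega$ is the one containing the direction of $y$); calling this value $A_M(y,s)$ and writing $\mathcal{E}_M^x a=A_M+\mathcal{A}_M$, the $A_M$ part is independent of $\omega$, so its $\omega$-integral over all of $\Omega$ is exactly $A_M(y,s)\phi_s(y)$, and the explicit formula for the spherical function provides the oscillation $q^{\pm is d(x,y)}$ needed for $N$ integrations by parts; the leftover $\mathcal{A}_M$ is supported on $\bigcup_{j<M}E_j^x(y)$, where $|2j-d(x,y)|\geq d(x,y)/3$, so there integration by parts works term by term. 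Both halves work simultaneously only because $M$ is a fraction $<1/2$ of $d(x,y)$; with your choice $n\sim d(x,y)$ no analogous decomposition is available. (A minor further point: since your remainder bound loses a power, giving $(1+d(x,y))^{-(N-1)}$, you should invoke condition \ref{double reg omega} with $l=N+1$, as the paper does, to recover the stated exponent.)
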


\begin{rem}
The proof is an adaptation of the second part of the proof of Theorem 2 in \cite{CS99}. The regularity in $s$ of the symbol and integration by parts are used to get a decay of the kernel. The main difference with \cite{CS99} is that our variable $s$ is in $[0,\tau]$ rather than $[-\tau,\tau]$ and the \emph{symmetry condition} is replaced by condition \ref{bord} of definition \ref{d:double}.
\end{rem}

\begin{proof}
We fix $x,y \in \T$ and $N \in \N$. As the definition of the kernel does not depend on the choice of a reference point, we can take $x$ to be the new reference point ($x=o$), to simplify the notation.
We thus have to prove that there exists a constant $C_a(N)$ independent of $o$ and $y$, such that the kernel 
\[ K_a(o,y)  = \int_{\Omega} \int_0^\tau q^{(\frac{1}{2}+is)h_\omega(y)} a(o,y,\omega,s) d\nu(\omega) d\mu(s) \]
satisfies 
\[  |K_a(o,y)| \leq C_a(N) \frac{q^{-\frac{|y|}{2}}}{(1+|y|)^N}.\]

We take $M$ to be the integer part of $\frac{|y|}{3}$ and we decompose \[ a(o,y,\omega,s) = (a - \mathcal{E}_M a)(o,y,\omega,s) + \mathcal{E}_M a(o,y,\omega,s). \]
The first part of the decomposition leads to the following computation
\begin{align*}
	&\left| \int_{\Omega} \int_0^\tau q^{(\frac{1}{2}+is)h_\omega(y)} (a - \mathcal{E}_M a)(o,y,\omega,s)  d\nu(\omega)  d\mu(s) \right| \\
	& \quad \leq \|a - \mathcal{E}_M a \|_\infty \left| \sum_{j=0}^{|y|} \int_{E_j(y)} q^{j-\frac{|y|}{2}} d\nu(\omega) \right| \\
	& \quad \leq  \|a - \mathcal{E}_M a \|_\infty \sum_{j=0}^{|y|} \nu(E_j(y)) q^{j-\frac{|y|}{2}} \\
	& \quad \leq  \|a - \mathcal{E}_M a \|_\infty (1+|y|) q^{-\frac{|y|}{2}} \\
	& \quad \leq \|a \|_{0,N+1} \frac{1}{(1+M)^{N+1}} (1+|y|) q^{-\frac{|y|}{2}} \\
	& \quad \leq C_N \|a \|_{0,N+1} \frac{q^{-\frac{|y|}{2}}}{(1+|y|)^N} 
\end{align*}
where we used the fact that $\nu(E_j(y)) \leq q^{-j}$ and that $1+M \geq C(1+|y|)$.

For the second part of the decomposition we denote $A_M(y,s) = \mathcal{E}_M a(o,y, \omega,s)$ for any $\omega \in \cup_{j = M}^{|y|} E_j(y)$ (for every $s$, $\mathcal{E}_M a(o,y, \cdot ,s)$ is constant on this set), and \[ \mathcal{A}_M(y,\omega,s) = \mathcal{E}_M a(o,y, \omega,s) - A_M(y,s), \]
so that we can write
\[ \mathcal{E}_M a(x,y,\omega,s) =  \mathcal{A}_M(y,\omega,s) + A_M(y,s). \] 
Because $|y| \geq M$ we then have
\begin{align*}
 \int_{\Omega} \int_0^\tau & q^{(\frac{1}{2}+is)h_\omega(y)} \mathcal{E}_M a(o,y,\omega,s)  d\nu(\omega)  d\mu(s) \\
	& = \sum_{j=0}^{|y|} \int_{E_j(y)} \int_0^\tau q^{(\frac{1}{2}+is)(2j - |y|)} \mathcal{A}_M(y,\omega,s)  d\nu(\omega)  d\mu(s) \\
	&\qquad + \int_{\Omega} \int_0^\tau  q^{(\frac{1}{2}+is) h_\omega(y)} A_M(y,s)  d\nu(\omega)  d\mu(s) \\
	&= \sum_{j=0}^{M-1} \int_0^\tau q^{(\frac{1}{2}+is)(2j - |y|)} \int_{E_j(y)} \mathcal{A}_M(y,\omega,s)  d\nu(\omega)  d\mu(s) \\
	&\qquad + \int_0^\tau  A_M(y,s) \phi_{s}(y)  d\mu(s) \\
	&= \sum_{j=0}^{M} I_{j,M},
\end{align*}
where for all $j \in \{0, \ldots, M-1\}$
\[ I_{j,M} = \int_0^\tau q^{(\frac{1}{2}+is)(2j - |y|)} \int_{E_j(y)} \mathcal{A}_M(y,\omega,s)  d\nu(\omega)  d\mu(s), \]
and
\[ I_{M,M} = \int_0^\tau  A_M(y,s) \phi_{s}(y)  d\mu(s). \]
We first look at the last term $I_{M,M}$ of the sum. The spherical function $\phi_s(y)$ is given on $\R \setminus \tau \Z$ by
\[ \phi_s(y) = c(s) q^{(is - \frac{1}{2})|y|} + c(-s) q^{(-is - \frac{1}{2})|y|}. \]
Recall from $\eqref{e:plancherel}$ that $d\mu(s) = c_P |c(s)|^{-2} ds$. We integrate by parts $N$ times $I_{M,M}$. The boundary terms vanish because of condition \ref{bord} of the definition of the class of symbols. We have

\begin{align*}
I_{M,M} &= c_P \int_0^\tau  A_M(y,s) q^{(is - \frac{1}{2})|y|} c(s) |c(s)|^{-2} \\
& \quad \qquad + A_M(y,s) q^{(-is - \frac{1}{2})|y|} c(-s) |c(s)|^{-2}  ds \\
& = \frac{c_P q^{-\frac{|y|}{2}}}{(i|y|\log q)^N} \int_0^\tau q^{is|y|}  \partial_s^N \left( A_M(y,s)c(s)|c(s)|^{-2}\right) \\ 
& \quad \qquad - q^{-is|y|} \partial_s^N \left( A_M(y,s)c(-s)|c(s)|^{-2}\right)ds
\end{align*}
The derivative
\[ \partial_s^N \left( A_M(y,s)c(\pm s)|c(s)|^{-2}\right) \]
is a linear combination of $N+1$ terms of the form
\[ \partial_s^k A_M(y,s) \partial_s^{N-k} \left(c(\pm s)|c(s)|^{-2}\right). \]
Because  $\| \partial_s^k A_M(y,\cdot) \|_\infty \leq \| \partial_s^k a \|_\infty$ for all $k$ we have 
\begin{align*}
 & \left| \int_0^\tau q^{\mp is|y|}  \partial_s^k A_M(y,s) \partial_s^{N-k} \left(c(\pm s)|c(s)|^{-2}\right) ds \right| \\
 & \qquad \leq  \| \partial_s^k a \|_\infty \int_0^\tau \partial_s^{N-k} \left( |c(\pm s)||c(s)|^{-2} \right) ds.
\end{align*}
Using the fact that the poles of $c(\pm s)$ are compensated by $|c(s)|^{-2}$, as can be seen in $\eqref{c(z)}$ and $\eqref{c(s)^-2}$ and therefore that the map $s \mapsto c(\pm s)|c(s)|^{-2}$ is $C^\infty([0,\tau])$ we conclude that
\[ |I_{M,M}| \leq C_N \frac{q^{-\frac{|y|}{2}}}{(1+|y|)^N} \sum_{k=0}^{N} \| \partial_s^k a \|_\infty. \]

It remains to bound the terms $I_{j,M}$ for $0 \leq i \leq M-1$. After $N+1$ integration by parts each term is given by
\[ \frac{c_P q^{j - \frac{|y|}{2}}}{[i(2j - |y|) \log q]^{N+1}} \int_0^\tau q^{is(2j - |y|)} \partial_s^{N+1} \left( \int_{E_j(y)} \mathcal{A}_M(y,\omega,s) d\nu(\omega) |c(s)|^{-2} \right) ds,\]
which is a linear combination of $N+2$ terms of the form
\[ \frac{c_P q^{j - \frac{|y|}{2}}}{[i(2j - |y|) \log q]^{N+1}} \int_0^\tau q^{is(2j - |y|)}  \int_{E_j(y)} \partial_s^k \mathcal{A}_M(y,\omega,s) d\nu(\omega) \partial_s^{N+1-k} (|c(s)|^{-2}) ds,\]
the module of which is bounded from above by
\[ \frac{c_P q^{j - \frac{|y|}{2}}}{(2j - |y|)^{N+1} \log q^{N+1}}  \nu(E_j(y)) \|\partial_s^k a \|_\infty \int_0^\tau \partial_s^{N+1-k} (|c(s)|^{-2}) ds \]
\[\leq C_N \frac{q^{- \frac{|y|}{2}}}{(2j - |y|)^{N+1}} \|\partial_s^k a \|_\infty,\]
using the fact that $\nu(E_j(y)) \leq q^{-j}$ and $s \mapsto |c(s)|^{-2}$ is $C^\infty([0,\tau])$.

Now, because $0 \leq j \leq M$  and $\frac{|y|}{3} - 1 \leq M \leq \frac{|y|}{3}$ we have $|2j - |y|| \geq \frac{|y|}{3}$ and by adapting the constant we obtain
\[C_N \frac{q^{- \frac{|y|}{2}}}{(2j - |y|)^{N+1}} \|\partial_s^k a \|_\infty \leq C'_N \frac{q^{- \frac{|y|}{2}}}{(1 + |y|)^{N+1}} \|\partial_s^k a \|_\infty \]
Finally there is a constant $C_N$ such that
\[ |I_{j,M}| \leq C_N \frac{q^{- \frac{|y|}{2}}}{(1 + |y|)^{N+1}} \sum_{k=0}^{N+1} \|\partial_s^k a \|_\infty.  \]

We thus have
\[ \sum_{j=0}^{M-1} |I_{j,M}| \leq M  C_N \frac{q^{- \frac{|y|}{2}}}{(1 + |y|)^{N+1}} \sum_{k=0}^{N+1} \|\partial_s^k a \|_\infty. \]
But $M \leq C(1+|y|)$, so changing the constant $C_N$ we have
\[ \sum_{j=0}^{M-1} |I_{j,M}| \leq C_N \frac{q^{- \frac{|y|}{2}}}{(1 + |y|)^{N}} \sum_{k=0}^{N+1} \|\partial_s^k a \|_\infty. \]
\end{proof}

\section{Continuity of the operators}\label{s:continuity}
\begin{thm}\label{continuity_symbol}
The operator $\Opd(a)$ associated with a double symbol $a \in \S(\T \times \T)$ can be extended to a bounded operator from $L^2(\T)$ to $L^2(\T)$. The following inequality holds: there exists $C >0$ such that
\[ \|\Opd(a) \|_2 \leq C \left( \|a\|_{\Omega,4} + \sum_{k=0}^4 \|\partial_s^k a \|_\infty \right), \]
where
\[ \|a\|_{\Omega,4} = \sup_{x,y \in \T} \sup_{n \in \N} (n+1)^4 \| (a - \mathcal{E}^x_n a)(x,y, \cdot, \cdot) \|_\infty. \]
\end{thm}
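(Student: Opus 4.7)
The plan is to prove the bound via the Schur test applied to the kernel $K_a(x,y)$ of $\Opd(a)$, with the key input being proposition \ref{rapid decay proposition}. Applied with $N=4$, that proposition gives, for all $x,y\in\T$,
\[ |K_a(x,y)| \leq C\Bigl(\|a\|_{\Omega,4} + \sum_{k=0}^{5}\|\partial_s^k a\|_\infty\Bigr)\frac{q^{-d(x,y)/2}}{(1+d(x,y))^4}. \]
The constant on the right already matches the form required by the theorem, up to a harmless shift in the upper index of the $s$-derivative sum. What remains is to turn the pointwise kernel bound into an $L^2$-operator-norm bound.

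A direct Schur test with constant weights is unavailable here: the sphere $\{y:d(x,y)=n\}$ in the $(q+1)$-regular tree has cardinality $(q+1)q^{n-1}$, so
\[ \sum_y \frac{q^{-d(x,y)/2}}{(1+d(x,y))^4} \;\sim\; \sum_{n\geq 0}\frac{q^{n/2}}{(1+n)^4}, \]
which diverges. No weight of the form $w(y)=q^{\alpha|y|}(1+|y|)^\beta$ cures this either, since regardless of the sign of $\alpha$ the bound $\bigl||y|-|x|\bigr|\leq d(x,y)\leq |x|+|y|$ leaves a factor $q^{\mu d(x,y)}$ with $\mu>0$ on one side of the Schur estimate. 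This obstacle is intrinsic to spaces of exponential volume growth.

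My plan is therefore to proceed by a Cotlar--Stein almost-orthogonality decomposition. Dyadically split the kernel $K_a=\sum_{j\geq 0}K_a^{(j)}$, where $K_a^{(j)}$ is the restriction of $K_a$ to the annulus $\{(x,y):2^j\leq d(x,y)<2^{j+1}\}$, set $T_j=\Opd(a)$ with kernel $K_a^{(j)}$, and bound $\Opd(a)=\sum_j T_j$ via
\[ \Bigl\|\sum_j T_j\Bigr\|_{L^2\to L^2} \leq \sum_{k\in\Z}\sup_j \bigl(\|T_j T_{j+k}^*\|^{1/2}+\|T_j^*T_{j+k}\|^{1/2}\bigr). \]
The off-diagonal compositions $T_j T_k^*$ and $T_j^*T_k$ would be estimated by reapplying the method of proposition \ref{rapid decay proposition} at the level of the product, that is by integrating by parts in $s$ and exploiting the $\omega$-regularity controlled by $\mathcal{E}_n^x$ in definition \ref{d:double}, now acting on the composed kernels supported in disjoint annuli.

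The hard part is precisely this almost-orthogonality step: the pointwise kernel bound alone is genuinely insufficient, and one has to import additional oscillatory cancellation (integration by parts in $s$, and regularity in $\omega$ through $\mathcal{E}_n^x$) at the level of $TT^*$. The four $s$-derivatives and the fourth power weight $(n+1)^4$ appearing in $\|a\|_{\Omega,4}$ in the statement should be exactly what is needed to make the resulting series summable in $j$, and to beat the exponential volume growth of the tree at the $L^2$ level rather than at the pointwise level.
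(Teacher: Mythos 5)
Your reduction to the kernel bound of proposition \ref{rapid decay proposition} matches the paper's first step, but the pivot of your plan rests on a claim that is false, and the paper's own proof shows why: the pointwise bound $|K_a(x,y)| \le C_a\, q^{-d(x,y)/2}(1+d(x,y))^{-N}$ with $N \ge 3$ \emph{is} sufficient for $L^2$-boundedness, with no further oscillatory input. The paper's proposition \ref{continuity_kernel} proves this with two elementary observations. First (lemma \ref{kernel product lemma}), on a tree $d(x,z)+d(z,y) = d(x,y) + 2d(z,c_z(x,y))$, so in the composed kernel the factor $q^{-d(z,c_z(x,y))}$ exactly offsets the $\approx q^{\,d(z,c_z(x,y))}$ count of points $z$ at a given distance from the geodesic $[x,y]$; hence the class of kernels obeying this bound is stable under composition, with constants multiplying. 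Second, for a finitely supported truncation of the kernel, $(A^*A)^k$ is Hilbert--Schmidt with $\|(A^*A)^k\|_{HS} \le (C_a C)^{2k}\, C' \,\#\mathrm{Supp}$, and the identity $\|A\|_2 = \lim_k \|(A^*A)^k\|_2^{1/(2k)}$ makes the support factor disappear in the limit; a weak-limit argument then removes the truncation. Your observation that the naive Schur test diverges is correct, but the conclusion you draw from it --- that one must import cancellation beyond the pointwise bound --- is not: the exponent $1/2$ is exactly critical, and the tree geometry makes the bound self-improving under $TT^*$ iterations.

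Independently of this, your Cotlar--Stein plan is not a proof: the almost-orthogonality estimates for $T_jT_{j+k}^*$ and $T_j^*T_{j+k}$ are precisely the difficult content and are only announced, not established. Worse, the scheme is not well set up: Cotlar--Stein requires $\sup_j\|T_j\|$ to be finite (the $k=0$ term of your sum), and within your own framework --- where only the pointwise bound is trusted and Schur fails --- there is no way to bound an individual $T_j$, since the exponential volume growth you identify as the obstruction persists inside each dyadic annulus $2^j \le d(x,y) < 2^{j+1}$; splitting in $d(x,y)$ does not localize the problem. (If instead you bound each $T_j$ by the composition-plus-power-trick argument above, the decomposition becomes superfluous, since the same argument bounds $\Opd(a)$ directly.) The discrepancy in the derivative count ($\sum_{k=0}^{5}$ versus $\sum_{k=0}^{4}$) is indeed harmless: the paper applies proposition \ref{rapid decay proposition} with $N=3$ and uses $\|a\|_{\Omega,3}\le\|a\|_{\Omega,4}$.
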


This theorem is an immediate consequence of the following proposition and of proposition \ref{rapid decay proposition} on the rapid decay of the kernel associated with $\Opd(a)$.
\begin{prop}\label{continuity_kernel}
 Let $A : \mathcal{F}_c(\T) \rightarrow L^2(\T)$ be an operator, where $\mathcal{F}_c$ is the set of finitely supported functions on $\T$. Suppose that there exists an integer $N \geq 3$ such that the kernel of $A$ satisfies
 \begin{equation}\label{ineq_ker}
  |K_A(x,y)| \leq C_A(N) \frac{q^{-\frac{d(x,y)}{2}}}{(1+d(x,y))^N}.
 \end{equation}
 Then $A$ can be extended to a bounded operator from $L^2$ to $L^2$ and there exists a constant $C(N)$ such that
 \[ \|A \|_{L^2 \rightarrow L^2} \leq C_A(N) C(N), \]
 the constant $C_A(N)$ being the same as in inequality $\eqref{ineq_ker}$.
\end{prop}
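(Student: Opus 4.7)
The strategy I would follow is to dominate $A$ pointwise by a radial convolution operator and then bound the norm of the latter via spherical (Fourier--Helgason) analysis on the tree. Set $k(n) = C_A(N)\, q^{-n/2}/(1+n)^N$, so that the hypothesis \eqref{ineq_ker} reads $|K_A(x,y)| \leq k(d(x,y))$. For any $f, g \in \mathcal{F}_c(\T)$, a pointwise comparison gives
\[ |\langle A f, g\rangle| \;\leq\; \sum_{x,y \in \T} k(d(x,y))\,|f(y)|\,|g(x)| \;=\; \langle T_k |f|, |g|\rangle, \]
where $T_k u(x) := \sum_y k(d(x,y))\,u(y)$. Hence $\|A\|_{L^2 \to L^2} \leq \|T_k\|_{L^2 \to L^2}$, reducing the problem to estimating the norm of the radial convolution operator $T_k$.

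The second step is to use that $T_k$ is diagonalized by the Fourier--Helgason transform, with eigenvalue the spherical transform $\hat{k}(s) := \sum_{x \in \T} k(|x|)\,\phi_s(x)$. This is a standard fact in tree harmonic analysis and rests on the plane-wave identity
\[ \sum_{y \in \T} k(d(x,y))\,q^{(1/2 - is)h_\omega(y)} \;=\; \hat{k}(s)\, q^{(1/2 - is)h_\omega(x)}, \]
which follows from the radiality of $k$ and the definition of $\phi_s$. Substituting this into the inversion formula \eqref{e:inverse} gives $\widehat{T_k f}(\omega, s) = \hat{k}(s)\,\hat{f}(\omega, s)$, and since $\mathcal{H}$ is an isometry from $L^2(\T)$ onto $L^2(\Omega \times [0,\tau])$, this yields $\|T_k\|_{L^2 \to L^2} \leq \sup_{s \in [0,\tau]} |\hat{k}(s)|$.

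The final step is to estimate $|\hat{k}(s)|$ using the pointwise bound $|\phi_s(x)| \leq \phi_0(x) = (1 + \tfrac{q-1}{q+1}|x|)\,q^{-|x|/2}$ (which follows from the explicit formula for $\phi_s$ recalled in Section~\ref{s:harmonic}) together with the volume growth $|\{x : |x| = n\}| = (q+1)q^{n-1}$ for $n \geq 1$. Plugging these in,
\[ |\hat{k}(s)| \;\leq\; \sum_{x \in \T} k(|x|)\,\phi_0(|x|) \;\leq\; C_A(N)\bigg( 1 + \frac{q+1}{q} \sum_{n=1}^{\infty} \frac{1}{(1+n)^{N-1}} \bigg), \]
the volume factor $q^{n-1}$ being exactly cancelled by the two factors of $q^{-n/2}$. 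The remaining sum converges precisely because $N \geq 3$, which delivers the claim with a constant $C(N)$ depending only on $N$ (and $q$).

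The step I expect to be the main obstacle, or rather the only nontrivial input, is the diagonalization of radial convolutions by the Fourier--Helgason transform in the $[0,\tau]$ convention used here; once that is granted the rest is an elementary volume-weighted sum, and one sees that the threshold $N \geq 3$ is sharp for this computation since the sphere of radius $n$ has $\sim q^n$ vertices, exactly compensating the two factors of $q^{-n/2}$ coming from the kernel and from $\phi_0$.
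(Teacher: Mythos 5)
Your argument is correct, but it follows a genuinely different route from the paper. You dominate $A$ by the positive radial convolution operator $T_k$ with $k(n)=C_A(N)q^{-n/2}(1+n)^{-N}$ and then bound $\|T_k\|_{L^2\to L^2}$ by the sup of its spherical transform, using $|\phi_s(x)|\le\phi_0(x)=\bigl(1+\tfrac{q-1}{q+1}|x|\bigr)q^{-|x|/2}$ and the volume count on spheres; this is a Haagerup-type multiplier argument, and the threshold $N\ge3$ appears exactly as you say. The paper instead proves a composition lemma (lemma \ref{kernel product lemma}) showing that kernels obeying \eqref{ineq_ker} with $N\ge3$ are stable under composition, and then gets the norm bound from $\|A\|_2=\rho(A^*A)^{1/2}=\lim_k\|(A^*A)^k\|_2^{1/2k}$, estimating each power by its Hilbert--Schmidt norm for finitely supported truncations and concluding by a weak-limit argument. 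What each approach buys: yours is shorter and more structural, but it leans on the diagonalization of radial convolutions by the Fourier--Helgason transform, which the paper never develops; to make it airtight you should justify $\widehat{T_kf}=\hat k\,\hat f$ for $f\in\mathcal{F}_c(\T)$ (e.g. by truncating $k$ at radius $R$, where the identity is a finite Fubini computation via the eigenfunction relation for $q^{(1/2+is)h_\omega(\cdot)}$ under sphere averages, and passing to the $L^2$ limit, since the Fourier--Helgason sum of $T_kf$ itself is not absolutely convergent), and note that only the isometry property of $\mathcal{H}$, not surjectivity, is needed for $\|T_kf\|_2\le\sup_s|\hat k(s)|\,\|f\|_2$. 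The paper's softer argument uses only the metric and volume growth of the tree, so it would survive in settings without explicit spherical analysis, and its composition lemma is reused later (in the proof of theorem \ref{t:product}), which is why the paper takes that detour; your spectral route yields the same constant quality $C(N)$ (depending also on $q$) with less machinery specific to this proposition.
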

\begin{proof}
 The following lemma will be used
\begin{lem}\label{kernel product lemma}
 Let $N \geq 3$. If there are constants $C_A = C_A(N)$ and $C_B = C_B(N)$ such that for all $x,y \in \T$ \[ |K_A(x,y)| \leq C_A \frac{q^{-d(x,y)/2}}{(1 + d(x,y))^N} \quad \text{ and } \quad |K_B(x,y)| \leq C_B \frac{q^{-d(x,y)/2}}{(1 + d(x,y))^N},\] then there is a constant $C(N)$ such that
\[ |K_{A\circ B}(x,y)| \leq C(N) C_A(N) C_B(N) \frac{q^{-d(x,y)/2}}{(1 + d(x,y))^N} \]
for all $x,y \in \T$.
\end{lem}

\begin{proof}
The kernel of the product $K_{A \circ B} (x,y) = \sum_z K_{A} (x,z) K_{B} (z,y)$ can be bounded by
\[ |K_{A \circ B} (x,y) | \leq C_A C_B \sum_z \frac{q^{-d(x,z)/2}}{(1 + d(x,z))^N} \frac{q^{-d(z,y)/2}}{(1 + d(z,y))^N} \]

On the tree $\T$, we have $d(x,z) + d(z,y) = d(x,y) + 2d(z,c_z(x,y))$, where $c_z(x,y)$ is the first vertex lying on both oriented segments $[x,z]$ and $[y,z]$. Note that this vertex lies on $[x,y]$. The sets $F_n = \{z \; | \; d(z,c_z(x,y)) = n \}$, $n \in \mathbb{N}$ then form a partition of the tree. In each $F_n$, for every $k = d(x,c_z(x,y))$ between $0$ and $d(x,y)$, the number of vertices $z$ such that $d(x,z) = k + n$ (i.e. such that $d(z,y) = d(x,y) + n - k$) is equal to $q^n$ when $k=0$ or $k=d(x,y)$, and is equal to $(q-1)q^{n-1}$ in the other cases. We then have
\begin{align*}
|K_{A \circ B} (x,y) | 
	& \leq C_A C_B q^{-d(x,y)/2} \sum_z \frac{q^{-c_z(x,y)}}{(1 + d(x,z))^N(1 + d(z,y))^N} \\
	& = C_A C_B q^{-d(x,y)/2} \sum_{n\geq 0} \sum_{z \in F_n} \frac{q^{-n}}{(1 + d(x,z))^N(1 + d(z,y))^N} \\
	& \leq C_A C_B q^{-d(x,y)/2} \sum_{n\geq 0} \sum_{k=0}^{d(x,y)} \frac{1}{(1 + k + n)^N(1 + d(x,y) + n - k)^N}
\end{align*}

To get the result, it is sufficient to prove that there exists a constant $C(N)$ such that for all $D \in \mathbb{N}$
\begin{equation}\label{serie lemme}
 \sum_{n \geq 0} \sum_{k=0}^{D} \frac{1}{(1 + k + n)^N(1 + D + n - k)^N} \leq \frac{C(N)}{(1+D)^N}.
\end{equation}
Let us denote by $\lfloor D/2 \rfloor$ the integral part of $D/2$. We have 
\[\sum_{n \geq 0} \sum_{k=0}^{D} \frac{1}{(1 + k + n)^N(1 + D + n - k)^N} \leq 2 \sum_{n \geq 0} \sum_{k=0}^{\lfloor D/2 \rfloor} \frac{1}{(1 + k + n)^N(1 + D + n - k)^N} \] where the inequality is an equality when $D$ is odd.
Now for every $n \geq 0$ and every $k \in \{0,\ldots,\lfloor D/2 \rfloor\}$, $D-k \geq D/2$ and
\[ \frac{1}{(1+D+n-k)} \leq \frac{1}{1+D/2}. \]
Therefore we have
\begin{align*}
 \sum_{n \geq 0} \sum_{k=0}^{D} \frac{1}{(1 + k + n)^N(1 + D + n - k)^N} 
 &\leq \frac{2}{(1+D/2)^N}\sum_{n \geq 0} \sum_{k=0}^{\lfloor D/2 \rfloor} \frac{1}{(1 + k + n)^N} \\
 &\leq \frac{2}{(1+D/2)^N}\sum_{n \geq 0} \sum_{k\geq0} \frac{1}{(1 + k + n)^N} 
\end{align*}
The series converges whenever $N \geq 3$, and $(1+D/2)^{-1} \leq 2(1+D)^{-1}$, so we obtain $\eqref{serie lemme}$, which concludes the proof.
\end{proof}

Notice that $A^*$, the adjoint of $A$, satisfies $\eqref{ineq_ker}$, because $K_{A^*}(x,y) = \overline{K_A(y,x)}$. Therefore $(A^*A)^k$ satisfies $\eqref{ineq_ker}$ with constant $C_A(N)^{2k} C(N)^{2k}$, by induction. 

We will first assume that $K_A(x,y)$ has finite support and then go back to the general case. 

Because $K_A(x,y)$ has finite support, both $K_A$ and $K_{A^*}$ have finite support in the first argument. The support of $K_{(A^*A)^k}$ in the first argument is thus finite\footnote{Note that a finite support in the first argument for $K_{A^*}$ is enough here, because then $K_{(A^*A)^k}$ has also finite support in the first argument}~:
 \[ \# \text{Supp}_1 = \# \{x \in \T \: | \: \exists y \in \T, K_{(A^*A)^k}(x,y) \neq 0 \} < \infty. \]
In this case $(A^*A)^k$ is Hilbert-Schmidt~: we have
\begin{align*}
 \|(A^*A)^k\|_2 \leq \|(A^*A)^k\|_{HS} &= \sum_{x,y \in \T} |K_{(A^*A)^k}(x,y)|^2 \\
  &\leq C_A(N)^{2k} C(N)^{2k} \sum_{x \in \T} \sum_{n \in \mathbb{N}} \sum_{y \in S(x,n)} \frac{q^{-n}}{(1+n)^{2N}} \\
  &\leq C_A(N)^{2k} C(N)^{2k}  \# \text{Supp}_1 \sum_{n \in \mathbb{N}} \frac{2}{(1+n)^{2N}} \\
  &\leq C_A(N)^{2k} C(N)^{2k} C'(N)  \# \text{Supp}_1 \\
\end{align*}
where on the third line we used the fact that there are $(q+1)q^{n-1} \leq 2q^n$ vertices on the sphere $S(x,n)$ of center $x$ and radius $n$, for each $x \in \T$. We then use the facts that $\|A^*A\|_2 = \|A\|^2_2$  and $A^*A$ is self-adjoint in order to write
\begin{align*}
 \|A\|_2 & = \|A^*A\|_2^{1/2} = \rho(A^*A)^{1/2} = \lim_{k \rightarrow \infty} \|(A^*A)^k \|_2^{\frac{1}{2k}} \\
  &\leq \lim_{k \rightarrow \infty} (C'(N) \# \text{Supp}_x)^{\frac{1}{2k}} C_A(N) C(N) \\
  &= C_A(N) C(N)
\end{align*}
where $\rho(A^*A)$ is the spectral radius of $A^*A$. We thus have a bound on $\|A\|_2$ which is independent of the support of $A$.

Going back to the general case, for an operator $A: \mathcal{F}_c(\T) \rightarrow L^2(\T)$ satisfying $\eqref{ineq_ker}$ we write
\[ A_R = M_{\chi_{B(0,R)}} A  M_{\chi_{B(0,R)}} \]
where $M_f$ is the multiplication by the function $f$ and $\chi_{B(0,R)}$ is the characteristic function of the ball of center $0$ and radius $R$. The operator $A_R$ has a kernel with finite support and converges weakly to $A$. 
Now $A_R$ satisfies $\eqref{ineq_ker}$ with constant $C_A(N) C(N)$ and therefore we have
\[ \langle \psi, A_R \varphi \rangle \leq C_A(N) C(N) \| \psi \|_2 \| \varphi \|_2, \]
the right member ot the inequality being independent of $R$, which gives when $R \rightarrow \infty$
\[ \langle \psi, A \varphi \rangle \leq C_A(N) C(N) \| \psi \|_2 \| \varphi \|_2 \]
for all $\varphi, \psi$ with finite support on $\T$. 
By a density argument we can replace $\psi$ with $A \varphi$ in the previous inequality and we get, for all $\varphi$ with finite support
\[ \| A \varphi \|_2 \leq C_A(N) C(N) \| \varphi \|_2. \]
The operator $A$ can thus be extended to a bounded operator from $L^2(\T)$ to $L^2(\T)$ with the same bound.
\end{proof}

\section{Adjoint and product}\label{s:adjoint}

\begin{thm}\label{t:adjoint}
Let $a = a_\epsilon \in \Sc$. Let $\Op(a)^*$ be the adjoint of $\Op(a)$. Then $\Op(a) - \Op^*(a)$ is negligible. In particular we have
\[ \| \Op(a)^* - \Op(\overline{a}) \|_{L^2 \rightarrow L^2} = o(1), \]
where $o(1) \rightarrow 0$ when $\epsilon \rightarrow 0$.
\end{thm}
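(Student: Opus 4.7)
The plan is to show that $\Op(a)^* - \Op(\overline a)$ is the operator $\Opd(c)$ associated to a double symbol that vanishes on the diagonal and whose norms are of order $\epsilon$, then invoke Proposition~\ref{rapid decay proposition} (kernel decay) and Proposition~\ref{continuity_kernel} (kernel-to-$L^2$ passage).

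First, I would compute the kernel of $\Op(a)^*$. Since $K_{\Op(a)^*}(x,y) = \overline{k_a(y,x)}$, using the Radon--Nikodym formula $d\nu_y/d\nu_x = q^{h_\omega(y)-h_\omega(x)}$ to return the boundary integral to $d\nu_x$ and simplifying the resulting exponent $(\frac12-is)(h_\omega(x)-h_\omega(y)) + (h_\omega(y)-h_\omega(x))$ to $(\frac12+is)(h_\omega(y)-h_\omega(x))$ gives
\[
 K_{\Op(a)^*}(x,y) = \int_\Omega\!\int_0^\tau q^{(\frac12+is)(h_\omega(y)-h_\omega(x))}\,\overline{a(y,\omega,s)}\,d\nu_x(\omega)\,d\mu(s),
\]
so $\Op(a)^* = \Opd(c_1)$ with $c_1(x,y,\omega,s) = \overline{a(y,\omega,s)}$. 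Since $\Op(\overline a) = \Opd(c_0)$ with $c_0(x,y,\omega,s) = \overline{a(x,\omega,s)}$, one obtains $\Op(a)^* - \Op(\overline a) = \Opd(c)$ for the diagonal-vanishing double symbol
\[
 c(x,y,\omega,s) = \overline{a(y,\omega,s) - a(x,\omega,s)}.
\]

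Next I would check that $c$ satisfies Definition~\ref{d:double} with quantitative estimates of size $\epsilon$. The boundary condition in $s$ is inherited from condition~\ref{bord} applied to both $a(x,\cdot,\cdot)$ and $a(y,\cdot,\cdot)$. The $\omega$-regularity condition~\ref{double reg omega} is exactly the content of the cross-derivative condition~\ref{varx cross derivative} satisfied by $a$, giving
\[
  \|(c - \mathcal{E}^x_n c)(x,y,\cdot,\cdot)\|_\infty \leq \epsilon\,\frac{C_l(d(x,y))}{(1+n)^l},
\]
and the Lipschitz condition~\ref{varx lipschitz} yields $\|\partial_s^k c(x,y,\cdot,\cdot)\|_\infty \leq C\epsilon\, d(x,y)$ for every $k$. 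The proof of Proposition~\ref{rapid decay proposition} only uses the norms of the symbol at the pair of points under consideration, so rerunning it with $(x,y)$ fixed and the estimates above produces, for every $N$,
\[
  |K_c(x,y)| \leq C_N\bigl(\epsilon\, C_{N+1}(d(x,y)) + \epsilon\, d(x,y)\bigr)\,\frac{q^{-d(x,y)/2}}{(1+d(x,y))^N}.
\]
Taking $N$ large enough that the polynomial growth of $C_{N+1}(d(x,y)) + d(x,y)$ is absorbed by the extra factors $(1+d(x,y))^{-L}$ gives the $\epsilon$-negligibility of $\Op(a)^* - \Op(\overline a)$, and Proposition~\ref{continuity_kernel} (applied with $N=3$) converts it into the operator-norm bound $O(\epsilon) = o(1)$.

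The main obstacle will be this pointwise reworking of Proposition~\ref{rapid decay proposition}: one has to track how each step of its proof---the decomposition $c = (c-\mathcal{E}^x_M c) + \mathcal{E}^x_M c$ with $M \sim |y|/3$, the oscillatory integrations by parts in $s$, and the splitting of $\Omega$ into the cylinders $E_j(y)$---produces constants depending on the individual pair $(x,y)$ rather than on a global supremum, and to verify that for the symbols built via Propositions~\ref{product closure}, \ref{prop stab geodesic} and~\ref{prop stab transfer} the functions $C_l(t)$ grow at most polynomially in $t$, so that the rapid-decay tail indeed dominates them.
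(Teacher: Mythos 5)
Your reduction of the problem is correct and matches the paper: the adjoint is $\Opd$ of the double symbol $\overline{a(y,\omega,s)}$ (after the change of measure $d\nu_y/d\nu_x = q^{h_\omega(y)-h_\omega(x)}$), so the remainder is $\Opd(c)$ with $c(x,y,\omega,s)=\overline{a(y,\omega,s)-a(x,\omega,s)}$, and the pointwise reworking of Proposition \ref{rapid decay proposition} for a fixed pair $(x,y)$ is legitimate, since its proof only uses the symbol at that pair. The gap is in the final absorption step. In Definition \ref{d:symbol}, the function $t\mapsto C_l(t)$ of condition \ref{varx cross derivative} is \emph{arbitrary}: nothing in the class $\Sc$ forces it to be polynomial, or even sub-exponential. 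Consequently the bound $|K_c(x,y)|\leq C_N\,\epsilon\,\bigl(C_{N+1}(d(x,y))+d(x,y)\bigr)q^{-d(x,y)/2}(1+d(x,y))^{-N}$ cannot in general be turned into $\epsilon$-negligibility by "taking $N$ large": if, say, $C_{N+1}(t)$ grows like $q^{t}$, no choice of $N$ (nor the factor $q^{-d(x,y)/2}$) absorbs it, and the constant in front of the rapidly decaying profile is unbounded in $d(x,y)$, not $o(1)$ in $\epsilon$. Your proposed remedy --- checking polynomial growth of $C_l$ for the symbols produced by Propositions \ref{product closure}, \ref{prop stab geodesic} and \ref{prop stab transfer} --- does not close this, because the theorem is asserted for every $a\in\Sc$, not only for those examples; polynomial $C_l$ is precisely the \emph{extra} hypothesis under which the paper proves the stronger $O(\epsilon)$ statement (Theorem \ref{t:product optimal}), and it is why Theorem \ref{t:adjoint} only claims $o(1)$.

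The missing idea is a cutoff at an $\epsilon$-dependent scale. The paper splits the remainder kernel as $K\chi_{\{d(x,y)\leq\rho\}}+K\chi_{\{d(x,y)>\rho\}}$. The far piece is handled using only the rapid decay of Proposition \ref{rapid decay proposition} applied to $a$ as an element of $\S$ (constants independent of $\epsilon$): it is bounded by $C_\alpha(1+\rho)^{-\alpha}q^{-d(x,y)/2}(1+d(x,y))^{-(N-\alpha)}$, hence has operator norm $O(\rho^{-\alpha})$ by Proposition \ref{continuity_kernel}. The near piece is $\Opd$ of $r=c\,\chi_{\{d(x,y)\leq\rho\}}$, for which condition \ref{varx cross derivative} gives $\|(r-\mathcal{E}^x_n r)(x,y,\cdot,\cdot)\|_\infty\leq\epsilon\,C_l(\rho)(1+n)^{-l}$ \emph{uniformly} in $x,y$ (the cutoff caps $d(x,y)$ by $\rho$), so Propositions \ref{rapid decay proposition} and \ref{continuity_kernel} give operator norm $\leq C\epsilon\,C_3(\rho)$. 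Choosing $\rho=\rho(\epsilon)\to\infty$ slowly enough that $\epsilon C_3(\rho)\to 0$ yields the claimed $o(1)$ without any growth assumption on $C_l$. Without this (or an equivalent) device, your argument proves the theorem only for the restricted subclass with polynomial $C_l$.
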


\begin{proof}
We will work on the kernels in order to estimate the difference $\Op(a)^* - \Op(\overline{a})$.

If $K_a(x,y)$ is the kernel of $\Op(a)$, then $\overline{K_a(y,x)}$ is the kernel of the adjoint $\Op(a)^*$. We study the kernel of the remainder $K(x,y) = \overline{K_a(y,x)} - K_a(x,y)$.

As the rapid decay property away from the diagonal (proposition \ref{rapid decay proposition}) is symmetric in $x$ and $y$, the kernel of the adjoint also has it and we have
\[ |K(x,y)| \leq C_{N} \frac{q^{-\frac{d(x,y)}{2}}}{(1+d(x,y))^N} \]
for all $N \in \N$. Therefore if we cut off the kernel, for all $\rho > 0$
\[ K(x,y) = K(x,y)\chi_{\{d(x,y) \leq \rho\}} + K_{R}(x,y) \]
then $K_{R}(x,y) = K(x,y)\chi_{ \{d(x,y) > \rho\} }$ satisfies
\[ |K_{R}(x,y)| \leq C_{N} \frac{1}{(1+\rho)^\alpha}\frac{q^{-\frac{d(x,y)}{2}}}{(1+d(x,y))^{N-\alpha}}. \]
We take $N - \alpha \geq 3$ so that the operator $R$ associated with this kernel is bounded on $L^2(\T)$, according to proposition $\ref{continuity_kernel}$, and we have
\[ \|R\|_2 \leq C_{\alpha} \frac{1}{(1+\rho)^\alpha} \leq C_{\alpha} \rho^{-\alpha}. \]

The other part $K(x,y)\chi_{\{d(x,y) \leq \rho\}}$ is the kernel associated with the pseudo-differential operator of double symbol 
\[ r(x,y,\omega,s) = (\overline{a(y,\omega,s)} - \overline{a(x,\omega,s)})\chi_{\{d(x,y) \leq \rho\}}. \]
But $a \in \Sc$, and according to condition $\ref{varx}$ of definition \ref{d:symbol} of $\Sc$, $\forall l \in \N$ and $\forall x,y \in \T, \exists C_{l} > 0$ such that $\forall n \in \N$
\begin{align*}
 | (r - \mathcal{E}^x_n r)(x,y,\omega,s) | 
 &= | (a - \mathcal{E}^x_n a)(x,\omega,s) - (a - \mathcal{E}^x_n a)(y,\omega,s) |\chi_{\{d(x,y) \leq \rho\}} \\
 &\leq \epsilon  \frac{C_l(d(x,y))}{(1+n)^l} \chi_{\{d(x,y) \leq \rho\}} \\
 &\leq \epsilon  \frac{C_l(\rho)}{(1+n)^l}.
\end{align*}
We used that $t \mapsto C_l(t)$ is an increasing function. Therefore, $r$ satisfies condition \ref{reg_omega} (and \ref{bord}) of definition \ref{d:double} uniformly in $y$ and according to proposition \ref{rapid decay proposition} we have
\[ |K_r(x,y)| \leq \epsilon C_l(\rho) \frac{q^{-\frac{d(x,y)}{2}}}{(1+d(x,y))^l}, \]
where $K_r$ is the kernel of the pseudo-differential operator $\Op(r)$ associated with the double symbol $r$.
We take $l = 3$ and apply proposition \ref{continuity_kernel} in order to obtain
\[ \| \Op(r) \|_2 \leq C \epsilon C_3(\rho), \]
for some constant $C > 0$.

We thus have for every $\alpha,\rho > 0$ 
\[ \| \Op(a)^* - \Op(\overline{a}) \|_2 \leq C_{\alpha} (\epsilon C_3(\rho) + \rho^{-\alpha}).\]
We then take $\rho = \rho(\epsilon)$ tending to infinity when $\epsilon \rightarrow 0$ but sufficiently slowly such that $\epsilon C_3(\rho) \rightarrow 0$ when $\epsilon \rightarrow 0$.
\end{proof}

\begin{thm}\label{t:product}
 Let $a \in \S$ and $b = b_\epsilon \in \Sc$. Then $\Op(a)\Op(b) - \Op(ab)$ is negligible. In particular we have
 \[ \| \Op(a)\Op(b) - \Op(ab) \|_{L^2 \rightarrow L^2} = o(1) \]
 where $o(1) \rightarrow 0$ when $\epsilon \rightarrow 0$.
\end{thm}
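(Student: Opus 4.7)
The plan is to mirror the strategy of Theorem \ref{t:adjoint}: isolate the ``small'' piece of $\Op(a)\Op(b) - \Op(ab)$ by freezing $b$ in its first argument, truncate far contributions using the rapid-decay property of the kernels, and extract an $\epsilon$ factor on the near part from condition \ref{varx cross derivative} on $b$.

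First, I would establish an exact algebraic identity. For each $x \in \T$, let $b_x(y,\omega,s) := b(x,\omega,s)$ be the symbol obtained by freezing the first argument of $b$ at $x$. Since $b_x$ does not depend on position, $\Op(b_x)$ acts as a Fourier multiplier, so $\Op(a)\Op(b_x) = \Op(a\cdot b_x)$; as $(a\cdot b_x)(x,\omega,s) = (ab)(x,\omega,s)$, evaluation at the position $x$ gives $\Op(a)\Op(b_x)u(x) = \Op(ab)u(x)$. Consequently, the kernel of $\Op(a)\Op(b) - \Op(ab)$ is
\[
K(x,w) = \sum_{y\in\T} k_a(x,y)\bigl[k_b(y,w) - k_{b_x}(y,w)\bigr].
\]

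Next, I would fix a cutoff $\rho>0$ and split $K = K_1 + K_2$ according to whether $d(x,y)\leq\rho$. For $K_2$, the rapid decay of $k_a$ from Proposition \ref{rapid decay proposition} yields a factor $(1+\rho)^{-\alpha}$ outside the cutoff; together with the rapid decay of $k_b$ and $k_{b_x}$, Lemma \ref{kernel product lemma}, and Proposition \ref{continuity_kernel}, this will give an $L^2\to L^2$ bound of order $\rho^{-\alpha}$ for any $\alpha>0$. For $K_1$, the key move is to apply Proposition \ref{rapid decay proposition} to the single symbol $r_x(y,\omega,s) := b(y,\omega,s) - b(x,\omega,s)$ with $x$ as parameter. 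Condition \ref{varx lipschitz} gives $\|\partial_s^k r_x\|_\infty \leq C\epsilon\rho$ on $\{d(x,y)\leq\rho\}$; condition \ref{varx cross derivative}, applied with the roles of the two spatial points interchanged so that $y$ is the center of the averaging $\mathcal{E}_n^y$, yields
\[
\bigl|r_x(y,\omega,s) - \mathcal{E}_n^y r_x(y,\omega,s)\bigr| = \bigl|(b - \mathcal{E}_n^y b)(y,\omega,s) - (b - \mathcal{E}_n^y b)(x,\omega,s)\bigr| \leq \epsilon\frac{C_l(\rho)}{(1+n)^l}.
\]
Proposition \ref{rapid decay proposition} then produces a rapid-decay bound on $k_b(y,w)-k_{b_x}(y,w)$ in $d(y,w)$ with constants of order $\epsilon\,C(\rho)$; combining with the rapid decay of $k_a$ through Lemma \ref{kernel product lemma} gives the same kind of bound on $K_1(x,w)$ with constants of order $\epsilon\,C'(\rho)$, and Proposition \ref{continuity_kernel} yields an $L^2\to L^2$ bound of order $\epsilon\,C''(\rho)$.

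Combining both estimates, $\|\Op(a)\Op(b) - \Op(ab)\|_{L^2\to L^2} \leq C(\epsilon\,C''(\rho) + \rho^{-\alpha})$, and choosing $\rho = \rho(\epsilon)\to\infty$ slowly enough that $\epsilon\,C''(\rho(\epsilon))\to 0$ concludes the argument. The main obstacle will lie in the near part: invoking condition \ref{varx cross derivative} with the averaging operator centered at $y$ rather than at $x$ as originally written. This rests on the observation that the cross-derivative inequality is symmetric in its two spatial arguments, so one may freely swap their roles to obtain precisely the estimate needed by Proposition \ref{rapid decay proposition} when applied in the variable $y$.
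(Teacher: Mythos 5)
Your overall route differs from the paper's: instead of replacing $\Op(b)$ by the right quantization modulo negligible operators (which is how the paper uses Theorem \ref{t:adjoint} to reduce exactly to the double symbol $a(x,\omega,s)b(y,\omega,s)$ and then cuts off in the kernel variables), you freeze $b$ at $x$, write the kernel of $\Op(a)\Op(b)-\Op(ab)$ as $\sum_y k_a(x,y)\bigl[k_b(y,w)-k_{b_x}(y,w)\bigr]$, and cut off in the \emph{intermediate} variable $y$. Your near part is sound: the estimate with the averaging centered at $y$ is simply condition \ref{varx cross derivative} instantiated at the ordered pair $(y,x)$ --- the condition is quantified over all pairs, so this swap (rather than any intrinsic ``symmetry'' of the inequality) is legitimate --- and together with condition \ref{varx lipschitz} it feeds Proposition \ref{rapid decay proposition} with constants of size $\epsilon C(\rho)$, after which Lemma \ref{kernel product lemma} and Proposition \ref{continuity_kernel} give the $O(\epsilon C''(\rho))$ bound. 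The exact identity $\Op(a)\Op(b_x)=\Op(a\,b_x)$ is of the same nature as the unproved identity $\Op_l(a)\Op_r(b)=\Opd(c)$ that the paper itself asserts, so it is acceptable at the paper's level of rigour (it still deserves a Fubini/surjectivity argument).

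The genuine gap is the far region $K_2$. There you invoke ``the rapid decay of $k_{b_x}$'', but this is not available with usable constants. Applying Proposition \ref{rapid decay proposition} to the frozen symbol $(y,w,\omega,s)\mapsto b(x,\omega,s)$ requires condition \ref{double reg omega} with the averaging centered at $y$, i.e.\ a bound on $(1+n)^l\,\|b(x,\cdot,\cdot)-\mathcal{E}^y_n b(x,\cdot,\cdot)\|_\infty$; membership of $b$ in $\Sc$ controls only the case $y=x$, and the change-of-center estimates the paper provides (inequality \eqref{reg omega xy}, or condition \ref{varx cross derivative} at the pair $(y,x)$) give a constant of the form $C_l+\epsilon C_l(d(x,y))$, where $t\mapsto C_l(t)$ is an \emph{arbitrary} increasing function. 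On $\{d(x,y)>\rho\}$ this growth in $d(x,y)$ cannot be absorbed by the factor $q^{-d(x,y)/2}(1+d(x,y))^{-N}$ coming from $k_a$, however large $N$ is, so the sum over far $y$ is not controlled and the claimed $O(\rho^{-\alpha})$ bound does not follow; note that Theorem \ref{t:product} assumes nothing like the polynomial growth of $C_l$ imposed in Theorem \ref{t:product optimal}. The crude regularity-free bound $|k_{b_x}(y,w)|\le C\|b\|_\infty(1+d(y,w))\,q^{-d(y,w)/2}$ does not save the argument either, since it produces a kernel bound growing linearly in $d(x,w)$, outside the scope of Proposition \ref{continuity_kernel}. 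To close the gap you must either prove a change-of-center estimate with controlled (say polynomial in $d(x,y)$) constants --- e.g.\ via the inclusion $\Omega_n(y,\omega)\subset\Omega_{n-d(x,y)}(x,\omega)$ and condition \ref{reg_omega} alone --- or abandon the intermediate-variable cutoff in favour of the paper's: pass to the right quantization modulo negligible operators, so the remainder is the single double-symbol operator with symbol $a(x,\omega,s)(b(y,\omega,s)-b(x,\omega,s))$, cut off at $d(x,w)>\rho$ in the kernel variables, and control the far part by the uniform rapid decay of the kernels of $\Op(a)\Op(b)$ (Lemma \ref{kernel product lemma}) and of $\Op(ab)$ (Propositions \ref{product closure} and \ref{rapid decay proposition}).
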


\begin{proof}
 Recall that if we define $\tilde{a}(x,y,\omega,s) = a(x,\omega,s)$ for all $y \in \T$, then $\Opd(\tilde{a}) = \Op(a)$. We will call this the \emph{left quantization} and denote it by  $\Op_l(a)$. Now if $\hat{a}(x,y,\omega,s) = a(y,\omega,s)$ for all $x \in \T$, we can define another operator, the \emph{right quantization}, by writing $\Op_r(a) = \Opd(\hat{a})$. If $k_a(x,y)$ is the kernel of $\Op_l(a)$ :
 \[ k_a(x,y) = \int_\Omega \int_0^\tau q^{\left(\frac{1}{2}+is \right)(h_\omega(y) - h_\omega(x))} a(x,\omega,s) d\nu_x(\omega) d\mu(s) \]
 then the kernel of  $\Op_r(a)$ is
 \begin{align*}
  \int_\Omega \int_0^\tau & q^{\left(\frac{1}{2}+is \right)(h_\omega(y) - h_\omega(x))} a(y,\omega,s) d\nu_x(\omega) d\mu(s) \\
  & \quad = \int_\Omega \int_0^\tau q^{\left(\frac{1}{2}-is \right)(h_\omega(x) - h_\omega(y))} a(y,\omega,s) d\nu_y(\omega) d\mu(s) \\
  & \quad = \overline{k_{\bar{a}}(y,x)}
 \end{align*}
because $d\nu_x(\omega) = q^{(h_\omega(x) - h_\omega(y))}d\nu_y(\omega)$.
If $a$ is real, then $\Op_l(a) = \Op^*(a)$ and theorem \ref{t:adjoint} tells us that $\Op_l(a) - \Op_r(a)$ is negligible. If $a$ is complex, we can treat separately the real and imaginary parts and we get also that $\Op_l(a) - \Op_r(a)$ is negligible.

Let $c(x,y,\omega,s) = a(x,\omega,s)b(y,\omega,s)$. Then it can be checked that $\Op_l(a)\Op_r(b) = \Opd(c)$. Modulo negligible operators, we thus have
\begin{align*}
 \Op(a)\Op(b) &= \Op_l(a) \Op_l(b) \\
 &= \Op_l(a)\Op_r(b) \\
 &= \Opd(c) \\
 &= \Op(ab) + \Opd(r) 
\end{align*}
where $r(x,y,\omega,s) = a(x,\omega,s)(b(y,\omega,s) - b(x,\omega,s))$. As in the proof of theorem \ref{t:adjoint}, it is sufficient to study a cut-off of the remainder $\Opd(r)$. Indeed, lemma \ref{kernel product lemma} tells us that the kernel $K$ of $\Op(a)\Op(b)$ has the property of rapid decay
\[ \forall N \in \N, \exists C_N > 0, \quad |K(x,y)| \leq C_N \frac{q^{-\frac{d(x,y)}{2}}}{(1+d(x,y))^N} \]
 as a product of two operators satisfying this property, and the same is true for $\Op(ab)$, because $ab \in \Sc$ according to lemma \ref{product closure}.

If we denote by $K_r$ the kernel of $\Opd(r)$, for all $\rho > 0$ we have
\[ K_r(x,y) = K_r(x,y)\chi_{\{d(x,y) \leq \rho\} } + K_R(x,y) \]
where $K_R$ satisfies
\[ \forall N \in \N, \exists C_N > 0, \quad |K_R(x,y)| \leq C_N \frac{q^{-\frac{d(x,y)}{2}}}{(1+d(x,y))^N} \rho^{-\alpha} \]
and therefore corresponds to a negligible operator, provided that we choose $\rho = \rho(\epsilon)$ such that $\lim_{\epsilon \rightarrow 0} \rho^{-\alpha}(\epsilon) = 0$.

The part $K_r(x,y)\chi_{\{d(x,y) \leq \rho\} }$ is the kernel of the operator $\Opd(r_\rho)$, where
\[ r_\rho(x,y,\omega,s) = r(x,y,\omega,s)\chi_{\{d(x,y) \leq \rho\} } = a(x,\omega,s)(b(y,\omega,s) - b(x,\omega,s)) \chi_{\{d(x,y) \leq \rho\} } \]
Given that $a$ is bounded and $b \in \Sc$, we have, using condition $\ref{varx}$ of definition \ref{d:symbol} of $\Sc$, $\forall l \in \N$ and $\forall x,y \in \T, \exists C_{l} > 0$ such that $\forall n \in \N$
\begin{align*}
 | (r_\rho - \mathcal{E}^x_n r_\rho)(x,y,\omega,s) | 
 &\leq \|a\|_\infty | (b - \mathcal{E}^x_n b)(x,\omega,s) - (b - \mathcal{E}^x_n b)(y,\omega,s) |\chi_{\{d(x,y) \leq \rho\}} \\
 &\leq C_{a}  \epsilon  \frac{C_l(d(x,y))}{(1+n)^l} \chi_{\{d(x,y) \leq \rho\}} \\
 &\leq C_{a}  \epsilon  \frac{C_l(\rho)}{(1+n)^l}.
\end{align*}
We used that $t \mapsto C_l(t)$ is an increasing function. The rest of the proof is then similar to the proof of theorem \ref{t:adjoint} on the adjoint.The function $r_\rho$ satisfies the conditions of definition \ref{d:double} and is therefore in the class $\S$. According to proposition \ref{rapid decay proposition} we have for every $l \in \N$,
\[ |K_{r_\rho}(x,y)| \leq C_a \epsilon C_l(\rho) \frac{q^{-\frac{d(x,y)}{2}}}{(1+d(x,y))^l}, \]
where $K_{r_\rho}$ is the kernel of $\Opd(r_\rho)$.
We take $l = 3$ and apply proposition \ref{continuity_kernel} in order to obtain
\[ \| \Opd(r_\rho) \|_2 \leq C_a \epsilon C_3(\rho). \]

We thus have for every $\alpha,\rho > 0$ 
\[ \| \Op(a)\Op(b) - \Op(ab) \|_2 \leq C_{a,\alpha} (\epsilon C_3(\rho) + \rho^{-\alpha}).\]
We then take $\rho = \rho(\epsilon)$ tending to infinity when $\epsilon \rightarrow 0$ but sufficiently slowly such that $\epsilon C_3(\rho) \rightarrow 0$ when $\epsilon \rightarrow 0$.
\end{proof}

With the preceding proof, we obtain a remainder in the product formula of the order of $O_\delta(\epsilon^{1-\delta})$ for any $\delta > 0$. By following a different strategy, it is possible to have a better remainder. But we have to make a stronger hypothesis on one of the symbols. However, this hypothesis is satisfied by our principal example of symbols (see Example \ref{ex:main}).

\begin{thm}\label{t:product optimal}
Let $a \in \S$ and $b=b_\epsilon \in \Sc$. Recall condition \ref{varx cross derivative} of definition \ref{d:symbol} as satified by symbol $b$:
for every $l \in \N$, there exists a function $t \mapsto C_l(t)$ such that for every $x,y \in \T$ and $n \in \N$
\[ | (b - \mathcal{E}^x_n b)(x,\omega,s) - (b - \mathcal{E}^x_n b)(y,\omega,s) | \leq \epsilon  \frac{C_l(d(x,y))}{(1+n)^l}, \]
and suppose that for the symbol $b$ the function $t \mapsto C_l(t)$ is polynomial.
In this case we have
\[ \| \Op(a)\Op(b) - \Op(ab) \|_2 \leq C \epsilon. \]
\end{thm}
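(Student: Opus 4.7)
I would follow the strategy of the proof of Theorem \ref{t:product} but replace its crude cutoff at distance $\rho$ with a pointwise kernel estimate that exploits the polynomial bound on $C_l$. The first step reuses the algebraic reduction: after composing with the adjoint-type identity $\Op_l(b) = \Op_r(b)$ (up to an error which can be treated by the same argument as the main term below), one obtains the exact identity $\Op(a)\Op(b) - \Op(ab) = \Opd(r)$ for the double symbol
\[ r(x,y,\omega,s) = a(x,\omega,s)\bigl(b(y,\omega,s) - b(x,\omega,s)\bigr), \]
so the task reduces to proving $\|\Opd(r)\|_2 \leq C\eps$. The adjoint-type error corresponds to the double symbol $b(x,\omega,s) - b(y,\omega,s)$ and is handled identically.

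The key observation is that the proof of Proposition \ref{rapid decay proposition} is essentially pointwise in $(x,y)$: the supremum over $x,y$ in $\|a\|_{\Omega,N}$ never actually enters the integral estimates. Applied at a fixed pair $(x,y)$, combined with Lemma \ref{sublemma : product closure} and conditions \ref{varx lipschitz} and \ref{varx cross derivative} on $b$, it produces
\[ \sup_{n\in\N}(1+n)^{l}\bigl\|r(x,y,\cdot,\cdot) - \mathcal{E}^x_n r(x,y,\cdot,\cdot)\bigr\|_\infty \leq C\eps\, C_{l}(d(x,y)) \]
and $\|\partial_s^k r(x,y,\cdot,\cdot)\|_\infty \leq C\eps\, d(x,y)$. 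Plugging these bounds into the proof of Proposition \ref{rapid decay proposition} yields, for each $N\in\N$,
\[ |K_r(x,y)| \leq C_N\, \eps\, P_N(d(x,y))\, \frac{q^{-d(x,y)/2}}{(1+d(x,y))^N}, \]
with $P_N$ a polynomial in $d(x,y)$ built from $C_{N+1}$ and the Lipschitz constant of $b$. Choosing $N$ large enough that $N - \deg P_N \geq 3$, the polynomial factor is absorbed into the decay and
\[ |K_r(x,y)| \leq C\eps\, \frac{q^{-d(x,y)/2}}{(1+d(x,y))^3}. \]
Proposition \ref{continuity_kernel} then gives $\|\Opd(r)\|_2 \leq C\eps$, concluding the proof.

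The hard part will be the final balancing step $N > \deg P_N + 3$: since $\deg P_N$ is controlled by the polynomial degree of $C_{N+1}$, and the latter may a priori grow with $N$ (as in the shift example of Example \ref{ex:main}, where $C_l(t) \sim t^l$), one must track carefully how the rapid-decay proof depends on $l$. I expect that in those cases the additional decay coming from $\|\partial_s^k r\|_\infty \lesssim \eps\, d(x,y)$ after $N+1$ integrations by parts suffices to absorb the polynomial growth in $d(x,y)$. This refinement is precisely what is missing from Theorem \ref{t:product}, whose cutoff forced the bound into the form $\eps\, C_3(\rho) + \rho^{-\alpha}$ and could not reach $O(\eps)$.
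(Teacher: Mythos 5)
There is a genuine gap, and it sits exactly where you flagged ``the hard part.'' Your plan is to apply Proposition \ref{rapid decay proposition} pointwise in $(x,y)$ to the double symbol $r(x,y,\omega,s)=a(x,\omega,s)(b(y,\omega,s)-b(x,\omega,s))$ and then absorb the polynomial factor by taking $N$ large. But the rapid-decay proof with target decay $(1+d)^{-N}$ consumes the $\omega$-regularity of the symbol at order $l=N+1$: the term $\|(r-\mathcal{E}^x_M r)(x,y,\cdot,\cdot)\|_\infty\,(1+d)\,q^{-d/2}$ with $M\approx d/3$ is only controlled by $\epsilon\,C_{N+1}(d)\,(1+M)^{-(N+1)}(1+d)\,q^{-d/2}$. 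The hypothesis of the theorem only asks each $C_l$ to be polynomial, and in the fundamental examples (shifts, Example \ref{ex:main}; see the remark after Proposition \ref{prop stab geodesic}) one has $C_l(t)\gtrsim t^l$, so $\deg C_{N+1}\geq N+1$ and the gain $(1+M)^{-(N+1)}\sim (d/3)^{-(N+1)}$ is completely cancelled: the pointwise bound degenerates to roughly $\epsilon\,(1+d)\,q^{-d/2}$, which does not meet the hypothesis of Proposition \ref{continuity_kernel} (no $(1+d)^{-3}$), and no choice of $N$ fixes this because increasing $N$ also increases the order $l=N+1$ of the cross-derivative condition you must invoke. The extra factor $d(x,y)$ you hope to exploit comes only from the $s$-derivative bounds and is irrelevant to this term. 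Interpolating your $\epsilon$-bound with the $O(1)$ rapid-decay bound only recovers $O(\epsilon^{1-\delta})$, i.e.\ nothing beyond Theorem \ref{t:product}. The same obstruction hits the adjoint-type error $\Op_l(b)-\Op_r(b)$ that your reduction through left/right quantization requires to be $O(\epsilon)$.

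The paper's proof avoids this by not asking for an $O(\epsilon)$ rapid-decay bound on the remainder kernel at all. It computes the composed symbol exactly from the kernel, $a\#b(x,\omega,s)=\sum_y K_a(x,y)\,q^{(\frac12-is)(h_\omega(y)-h_\omega(x))}b(y,\omega,s)$, so that $\Op(a)\Op(b)-\Op(ab)=\Op(r)$ with the \emph{single} symbol $r(x,\omega,s)=\sum_y K_a(x,y)\,q^{(\frac12-is)(h_\omega(y)-h_\omega(x))}(b(y,\omega,s)-b(x,\omega,s))$, and then verifies the hypotheses of Theorem \ref{continuity_symbol} for $r$ with constants $O(\epsilon)$. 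The decisive point is that the needed $(1+n)^{-l}$ decay of $\|(r-\mathcal{E}^x_n r)(x,\cdot,\cdot)\|_\infty$ is proved for each fixed $l$ separately, by summing over $y$ against $K_a$ whose decay exponent $N=N(l,\deg C_l)$ may be chosen as large as one likes (the constants of $a$ need no smallness), together with a splitting on whether $d(x,c^x(y,\omega))=n$: off that set the modulation factor commutes with $\mathcal{E}^x_n$ and condition \ref{varx cross derivative} applies, while on it $d(x,y)\geq n$, so the factor $(1+n)^{-l}$ is extracted from the decay of $K_a$ itself rather than from the symbol. This per-$l$ choice of $N$ is precisely the flexibility your single-exponent pointwise kernel estimate cannot provide, and without it the claimed bound $|K_r(x,y)|\leq C\epsilon\,q^{-d(x,y)/2}(1+d(x,y))^{-3}$ is not attainable under the stated hypothesis.
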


\begin{proof}
The action of the composition $\Op(a)\Op(b)$ of two pseudo-differential operators, is equivalent to the action of an operator $\Op(a\#b)$ where $a\#b(x, \omega, s)$ is obtained from the kernel by
\begin{align*}
 a\#b(x,\omega,s) &= \sum_{z \in \T} q^{(\frac{1}{2}-is)(h_\omega(z) - h_\omega(x))} K_{a\#b}(x,z) \\
 &= \sum_{z \in \T} q^{(\frac{1}{2}-is)(h_\omega(z) - h_\omega(x))} \sum_{y \in \T} K_a(x,y)K_b(y,z) \\
 &= \sum_{y \in \T} K_a(x,y) q^{(\frac{1}{2}-is)(h_\omega(y) - h_\omega(x))} \sum_{z \in \T} q^{(\frac{1}{2}-is)(h_\omega(z) - h_\omega(y))} K_b(y,z) \\
 &= \sum_{y \in \T} K_a(x,y) q^{(\frac{1}{2}-is)(h_\omega(y) - h_\omega(x))} b(y,\omega,s).
\end{align*}
The operator $\Op(a)\Op(b) - \Op(ab) = \Op(a\#b) - \Op(ab)$ is then an operator $\Op(r)$, where
\begin{align}
r(x,\omega,s) &= a\#b(x,\omega,s) - (ab)(x,\omega,s) \nonumber\\
	&= \sum_{y \in \T} K_a(x,y) q^{(1/2 - is)( h_\omega(y) - h_\omega(x))} (b(y,\omega,s) - b(x,\omega,s)),\label{remainder symbol product optimal}
\end{align}
because
\[ \sum_{y \in \T} K_a(x,y) q^{(1/2 - is)( h_\omega(y) - h_\omega(x))} b(x,\omega,s) = a(x,\omega,s)b(x,\omega,s). \]

We would like to show that $r \in \S$ (when seen as a double symbol) and that the $L^2$-norm of $\Op(r)$ is bounded by $\epsilon$. In order to do that, according to theorem \ref{continuity_symbol}, we must first bound the derivatives of $r(x,\omega,s)$ with respect to $s$.

Note that for every $k \in \N$,
\begin{align*}
 |\partial_s^k  q^{(\frac{1}{2}-is)( h_\omega(y) - h_\omega(x))}|
  &= |(h_\omega(y) - h_\omega(x))^k q^{(\frac{1}{2}-is)( h_\omega(y) - h_\omega(x))}| \\
  &\leq d(x,y)^k q^{\frac{1}{2}( h_\omega(y) - h_\omega(x))}.
\end{align*}
We thus have for every $\alpha \in \N$, using the Leibniz rule on \eqref{remainder symbol product optimal}, and the previous inequality
\begin{align}
|\partial_s^\alpha r(x,\omega,s)|
	& \leq C_\alpha \sum_{k=0}^\alpha \sum_{y \in \T} |K_a(x,y)| d(x,y)^k q^{\frac{1}{2}( h_\omega(y) - h_\omega(x))} |\partial_s^{\alpha - k} (b(y,\omega,s) - b(x,\omega,s))| \nonumber\\
	& \leq C_\alpha C_a(N) \sum_{k=0}^\alpha \sum_{y \in \T} \frac{q^{-\frac{d(x,y)}{2}}}{(1+d(x,y))^N} d(x,y)^{k} q^{\frac{1}{2}( h_\omega(y) - h_\omega(x))} \epsilon d(x,y) \nonumber \\
	& \leq C_\alpha C_a(N) \sum_{k=0}^\alpha \sum_{y \in \T} \frac{q^{-\frac{d(x,y)}{2}}}{(1+d(x,y))^{N-k-1}} q^{\frac{1}{2}( h_\omega(y) - h_\omega(x))} \epsilon \label{eq product optimal 2},
\end{align}
for all $N \in \N$. We used proposition \ref{rapid decay proposition} for symbol $a$ on the rapid decay of the kernel of an operator with symbol in $\S$, and condition \ref{varx} in the definition of $\Sc$ for symbol $b$, controlling the variation with respect to $x$ of a semi-classical symbol. 

To complete this estimation we will use the equality
\[ \sum_{y:d(x,y)=n} q^{\frac{1}{2} ( h_\omega(y) - h_\omega(x))} = (1+n) q^{\frac{n}{2}}. \]
We have for every $\alpha \in \N$ and $N \in \N$
\begin{align}
|\partial_s^\alpha r(x,\omega,s)|
	& \leq C_\alpha C_a(N)\epsilon \sum_{k=0}^\alpha \sum_{n \in \N} \frac{q^{-\frac{n}{2}}}{(1+n)^{N-k-1}}  \sum_{y:d(x,y)=n} q^{\frac{1}{2} ( h_\omega(y) - h_\omega(x))} \label{e:optimal s derivative}\\
	& \leq C_\alpha C_a(N)\epsilon \sum_{k=0}^\alpha \sum_{n \in \N} \frac{q^{-\frac{n}{2}}}{(1+n)^{N-k-1}} (1+n) q^{\frac{n}{2}} \nonumber\\
	& \leq C_\alpha C_a(N)\epsilon \sum_{k=0}^\alpha \sum_{n \in \N} \frac{1}{(1+n)^{N-k-2}} \nonumber\\
	& \leq C_{\alpha,a} \epsilon,\nonumber
\end{align}
if we take $N \geq \alpha + 4$.

We must also bound, for every $l$
\[ \sup_n (1+n)^l |r - \mathcal{E}_n^x r|(x,\omega,s). \]
We have
\begin{align*}
(r - \mathcal{E}_n^xr)(x,\omega,s) &= \sum_{y \in \T} K_a(x,y)  \Bigg( q^{(1/2 - is)( h_\omega(y) - h_\omega(x))} (b(y,\omega,s) - b(x,\omega,s)) \\
&\quad - \mathcal{E}^x_n \left(q^{(\frac{1}{2} - is)( h_\omega(y) - h_\omega(x))}(b(y,\omega,s) - b(x,\omega,s))\right) \Bigg). 
\end{align*}
Recall that $c^x(y,\omega)$ is the confluence point of $y$ and $\omega$ centered in $x$, that is the last point lying on $[x,\omega)$ of the geodesic segment $[x, y]$. According to \eqref{e:confluence}, we have  $h_\omega(y) - h_\omega(x) = 2 d(x,c^x(y,\omega)) - d(x,y)$. We will divide into two parts, whether $d(x,c^x(y,\omega)) = n$ or not. 

\paragraph*{}
If $d(x,c^x(y,\omega)) \neq n$ we have
\[ \mathcal{E}^x_n \left(q^{(\frac{1}{2} - is)( h_\omega(y) - h_\omega(x))}(b(y,\omega,s) - b(x,\omega,s))\right) =  q^{(\frac{1}{2} - is)( h_\omega(y) - h_\omega(x))} \mathcal{E}^x_n (b(y,\omega,s) - b(x,\omega,s)). \]
Indeed, using the definition \eqref{e:mean operator} of $\mathcal{E}^x_n$ and the decomposition along the family of sets $\{ E^x_j(y) \}_j $ defined by \eqref{e:decomposition omega} we have
\begin{align*} 
\mathcal{E}^x_n &(q^{(\frac{1}{2} - is)( h_\omega(y) - h_\omega(x))}(b(y,\omega,s) - b(x,\omega,s))) \\
 &= \frac{1}{\nu_x(\Omega_n(x,\omega))} \sum_{j=0}^{d(x,y)} q^{(\frac{1}{2} - is)(2j - d(x,y))} \int_{\Omega_n(x,\omega) \cap E^x_j(y)} b(y,\omega',s) - b(x,\omega',s) d\nu_x(\omega') \\
 &= q^{(\frac{1}{2} - is)( h_\omega(y) - h_\omega(x))} \frac{1}{\mu(\Omega_n(x,\omega))}  \int_{\Omega_n(x,\omega)} b(y,\omega',s) - b(x,\omega',s) d\nu_x(\omega')
\end{align*}
because if $d(x,c^x(y,\omega)) \neq n$, $E^x_j(y) \cap \Omega_n(x,\omega) = \emptyset$, unless $ j = d(x,c^x(y,\omega)) $, in which case $2j - d(x,y) = h_\omega(y) - h_\omega(x)$ and $E^x_j(y) \cap \Omega_n(x,\omega) = \Omega_n(x,\omega)$.

This leads for this part of $(r - \mathcal{E}_n^x r)(x,\omega,s)$ to an expression of the form
\begin{equation*}
 \sum_{y:d(x,c^x(y,\omega)) \neq n} K_a(x,y) q^{(1/2 - is)( h_\omega(y) - h_\omega(x))} f(x,y,\omega,s),
\end{equation*}
where
\[ f(x,y,\omega,s) =  b(y,\omega,s) - b(x,\omega,s) - \mathcal{E}^x_n(b(y,\omega,s) - b(x,\omega,s)). \]
From the hypothesis of the theorem, we have
\[ |f(x,y,\omega,s)| \leq C_l(d(x,y))  \epsilon  \frac{1}{(1+n)^l} \]
with $t \mapsto C_l(t)$ polynomial in $t$.
We thus have
\begin{align*}
  (1+n)^l \Bigg| \sum_{y:d(x,c^x(y,\omega)) \neq n} & K_a(x,y) q^{(1/2 - is)( h_\omega(y) - h_\omega(x))} f(x,y,\omega,s) \Bigg| \\
  &\leq \sum_{y\in \T} |K_a(x,y)| q^{\frac{1}{2}( h_\omega(y) - h_\omega(x))} C_l(d(x,y)) \epsilon \\
  &\leq C_a(N) \epsilon \sum_{y\in \T} \frac{q^{-\frac{d(x,y)}{2}}}{(1+d(x,y))^N} q^{\frac{1}{2}( h_\omega(y) - h_\omega(x))} C_l(d(x,y)), 
\end{align*}
which is very similar to \eqref{eq product optimal 2}. We can therefore follow the same steps, and use the fact that the polynomial dependence of $C_l(d(x,y))$ in $d(x,y)$ is compensated by the rapid decay of $K_a(x,y)$, if we take $N$ sufficiently large. We obtain finally
\[ (1+n)^l \Bigg| \sum_{y:d(x,c^x(y,\omega)) \neq n} K_a(x,y) q^{(\frac{1}{2} - is)( h_\omega(y) - h_\omega(x))} f(x,y,\omega,s) \Bigg| \leq C_a \epsilon \]

\paragraph*{}
If $d(x,c^x(y,\omega)) = n$ then $d(x,y) \geq n$ and we have
\[ |K_a(x,y)| \leq C_a(N) \frac{1}{(1+n)^l} \frac{1}{(1+d(x,y))^{N-l}}. \]
Let $g(x,y,\omega,s) = q^{(\frac{1}{2} - is)( h_\omega(y) - h_\omega(x))}(b(y,\omega,s) - b(x,\omega,s))$. We have
\begin{equation}\label{e:optimal 10}
 (1+n)^l \left| \sum_{y : d(x,c^x(y,\omega)) = n} K_a(x,y)\left(g(x,y,\omega,s) - \mathcal{E}^x_n g(x,y,\omega,s)\right) \right|
\end{equation}
\begin{align*}
\leq C_a(N) \sum_{y : d(x,c^x(y,\omega)) = n}   \frac{q^{-\frac{d(x,y)}{2}}}{(1+d(x,y))^{N-l}} \Bigg( q^{\frac{1}{2}( h_\omega(y) - h_\omega(x))} |b(y,\omega,s) - b(x,\omega,s)| \\ +  \frac{1}{\nu_x(\Omega_n(x,\omega))}\int_{\Omega_n(x,\omega)} q^{\frac{1}{2}( h_{\omega'}(y) - h_{\omega'}(x))} |b(y,\omega',s) - b(x,\omega',s)| d\nu_x(\omega') \Bigg).
\end{align*}

The first part of the sum between brackets leads to a computation similar to \eqref{eq product optimal 2}: we know we have 
\[ C_a(N) \sum_{y : d(x,c^x(y,\omega)) = n}   \frac{q^{-\frac{d(x,y)}{2}}}{(1+d(x,y))^{N-l}} q^{\frac{1}{2}( h_\omega(y) - h_\omega(x))} |b(y,\omega,s) - b(x,\omega,s)| \]
\[ \leq C_a(N) \sum_{y \in \T}   \frac{q^{-\frac{d(x,y)}{2}}}{(1+d(x,y))^{N-l}} q^{\frac{1}{2}( h_\omega(y) - h_\omega(x))} |b(y,\omega,s) - b(x,\omega,s)| \]
\[ \leq C_a(l) \epsilon\]

The second part of the sum can be written
\begin{align*} 
 \frac{1}{\nu_x(\Omega_n(x,\omega))} \sum_{j=0}^{d(x,y)} q^{j - \frac{d(x,y)}{2}} \int_{\Omega_n(x,\omega) \cap E^x_j(y)} |b(y,\omega',s) - b(x,\omega',s)| d\nu_x(\omega') \\
=  \frac{1}{\nu_x(\Omega_n(x,\omega))} \sum_{j=n}^{d(x,y)} q^{j - \frac{d(x,y)}{2}} \int_{E^x_j(y)} |b(y,\omega',s) - b(x,\omega',s)| d\nu_x(\omega')
\end{align*}
noticing that when $d(x,c^x(y,\omega)) = n$, $E^x_j(y) \cap \Omega_n(x,\omega) = \emptyset $ unless $ n \leq j \leq d(x,y)$ in which case $E^x_j(y) \cap \Omega_n(x,\omega) = E^x_j(y)$.
We can then bound this by
\[\sum_{j=n}^{d(x,y)} \frac{q^{j - \frac{d(x,y)}{2}}}{\nu_x(\Omega_n(x,\omega))} \nu_x(E^x_j(y)) \epsilon d(x,y) \]
\[ \leq \sum_{j=n}^{d(x,y)} \frac{q^{- \frac{d(x,y)}{2}}}{\nu_x(\Omega_n(x,\omega))} \epsilon d(x,y) \]
where we used condition \ref{varx} of the definition \ref{d:symbol} of the symbol class, and the fact that $\nu_x(E^x_j(y)) \leq q^{-j}$.

Using the inequality $\nu_x(\Omega_n(x,\omega))^{-1} \leq C_q q^n$ and defining \[ B_k = \{ y : d(x,y)=k, d(x,c^x(y,\omega)) = n\}, \] we have, going back to \eqref{e:optimal 10}
\begin{align*}
\sum_{y : d(x,c^x(y,\omega)) = n}  & \frac{q^{-\frac{d(x,y)}{2}}}{(1+d(x,y))^{N-l}} \sum_{j=n}^{d(x,y)} \frac{q^{- \frac{d(x,y)}{2}}}{\nu_x(\Omega_n(x,\omega))} \epsilon d(x,y) \\
&\leq C_q \sum_{y : d(x,c^x(y,\omega)) = n} \frac{q^{-d(x,y)}}{(1+d(x,y))^{N-l-2}} q^n \epsilon \\
&=  C_q \sum_{k \geq n} \sum_{y \in B_k} \frac{q^{-k}}{(1+k)^{N-l-2}} q^n \epsilon.
\end{align*}
Finally, because $ \#B_k \leq C_q q^{k-n}$ we have 
\[ C_q \sum_{k \geq n} \sum_{y \in B_k} \frac{q^{-k}}{(1+k)^{N-l-2}} q^n \epsilon \leq  \left( C_q \sum_{k \geq n} \frac{1}{(1+k)^{N-l-2}}\right) \epsilon  \]
and
\[ (1+n)^l \left| \sum_{y : d(x,c^x(y,\omega)) = n} K_a(x,y)[ g(x,y,\omega,s) - \mathcal{E}^x_n g(x,y,\omega,s)] \right| \leq C(a,l,q) \epsilon. \]
We thus have for every $l \in \N$, and for all $x \in \T$
\[ \sup_n (1+n)^l \|(r - \mathcal{E}_n^x r)(x,\cdot,\cdot)\|_\infty \leq C(a,l,q) \epsilon. \]
Together with \eqref{e:optimal s derivative} and using theorem \ref{continuity_symbol}, we get the desired result.
\end{proof}

\section{Commutator with the Laplacian}\label{s:commutator}
In the usual pseudo-differential calculus on manifolds, the highest order term in the expansion of the symbol of the commutator $[\Op(a),\Op(b)]$ of two pseudo-differential operators is given by the Poisson bracket $\{a,b\}$. In our case it is not clear what would be the analogue of this quantity. We will limit ourselves to the special case of the commutator of a pseudo-differential operator with the Laplacian, defined for every function $f : \T \rightarrow \C$ by
\[ \Delta f(x) = \frac1{q+1}\sum_{y: d(x,y)=1} f(y). \]
This is the starting point of an ``Egorov''-type theorem, which gives invariance properties relating pseudo-differential operators and the dynamics on regular graphs.

\begin{prop}
Let $a \in \S$ be a symbol. We assume for simplicity that $a(x,y,\omega,s) = a(x,\omega,s)$ does not depend on $y$.
The commutator
\[ [\Delta, \Op(a)] = \Delta \Op(a) - \Op(a) \Delta \]
is an operator $\Op(c)$ where $c$ is given by 
\[ c = \frac{q^{\frac{1}{2}}}{q+1} \left( q^{-is}(a \circ \sigma - a) + q^{is} \left( La - a \right) \right). \]
We have more explicitly that $c(x,\omega,s)$ is equal to
\[ \frac{q^{\frac{1}{2}}}{q+1} \left( q^{-is}(a \circ \sigma(x,\omega,s) - a(x,\omega,s)) + \frac{q^{is}}{q} \sum_{y:\sigma_\omega(y) = x} \left( a(y,\omega,s) - a(x,\omega,s) \right) \right). \]
According to propositions \ref{prop stab geodesic} and \ref{prop stab transfer}, $c \in \S$ and the commutator is still a pseudo-differential operator.

The same result is true if we take $a \in \Sc$. In this case $c \in \Sc$.
\end{prop}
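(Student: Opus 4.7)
The plan is to view $\Op(a)$ through the Fourier--Helgason inversion formula and exploit the fact that the ``plane waves'' $q^{(1/2\pm is)h_\omega(\cdot)}$ are generalized eigenfunctions of $\Delta$. Namely, a direct computation using that every vertex $x$ has exactly one neighbor $\sigma_\omega(x)$ with $h_\omega$ increased by $1$ and $q$ neighbors with $h_\omega$ decreased by $1$, gives
\[ \Delta q^{(\frac{1}{2}\pm is)h_\omega(x)} = \gamma(s)\, q^{(\frac{1}{2}\pm is)h_\omega(x)}, \qquad \gamma(s) = \frac{q^{1/2}}{q+1}(q^{is}+q^{-is}). \]
Using $d\nu_x(\omega) = q^{h_\omega(x)}d\nu(\omega)$, I will rewrite the definition of $\Op(a)$ in the form
\[ \Op(a)u(x) = \int_\Omega \int_0^\tau q^{(\frac{1}{2}-is)h_\omega(x)}\, a(x,\omega,s)\, \hat u(\omega,s)\, d\nu(\omega)\, d\mu(s), \]
so that all the action of $\Delta$ on the ``$x$''-variable is visible.

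For the ``easy'' side, I will observe that $\Delta$ is self-adjoint on finitely supported functions and therefore $\widehat{\Delta u}(\omega,s) = \gamma(s)\hat u(\omega,s)$ (this is immediate from the eigenfunction relation and a swap of summations). This yields
\[ \Op(a)\Delta u(x) = \int\!\!\int q^{(\frac{1}{2}-is)h_\omega(x)}\, a(x,\omega,s)\gamma(s)\, \hat u(\omega,s)\, d\nu(\omega)\, d\mu(s). \]
For the other side, I will apply $\Delta$ directly to the integrand, which requires computing $\Delta_x\bigl(q^{(\frac{1}{2}-is)h_\omega(x)}a(x,\omega,s)\bigr)$; splitting the neighbor sum into the single forward neighbor $\sigma_\omega(x)$ and the $q$ backward neighbors $\{y:\sigma_\omega(y)=x\}$ gives
\[ \Delta_x\bigl(q^{(\frac{1}{2}-is)h_\omega(x)}a(x,\omega,s)\bigr) = \frac{q^{1/2}}{q+1}\, q^{(\frac{1}{2}-is)h_\omega(x)}\bigl[q^{-is}(a\circ\sigma)(x,\omega,s) + q^{is}\, La(x,\omega,s)\bigr]. \]

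Taking the difference, the diagonal term $\gamma(s)a(x,\omega,s)$ reassembles as $\frac{q^{1/2}}{q+1}(q^{-is}+q^{is})a(x,\omega,s)$ and combines with the two terms above to produce exactly
\[ c(x,\omega,s) = \frac{q^{1/2}}{q+1}\Bigl(q^{-is}(a\circ\sigma-a)(x,\omega,s) + q^{is}(La-a)(x,\omega,s)\Bigr), \]
and one recognizes $[\Delta,\Op(a)]u(x) = \Op(c)u(x)$ directly from the inversion formula.

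It remains to check that $c\in\S$ (respectively $\Sc$) whenever $a$ is. The terms $a\circ\sigma - a$ and $La - a$ lie in the appropriate symbol class by propositions \ref{prop stab geodesic} and \ref{prop stab transfer}, so the only thing to verify is closure under multiplication by the $s$-smooth factors $q^{\pm is}$. These factors and all their $s$-derivatives are bounded; condition \ref{reg_omega} and the variation conditions \ref{varx lipschitz}--\ref{varx cross derivative} are preserved (no action on $x$ or $\omega$), and condition \ref{bord} survives because a product $q^{\pm is}f(s)$ inherits the vanishing of $\partial_s^k$ at $s=0,\tau$ from $f$ via the Leibniz rule. The only mildly delicate point of the whole argument is the interchange of $\Delta_x$ with the $\omega,s$ integrals, which is harmless for $u$ finitely supported since the Fourier--Helgason integrand is then a finite sum of smooth functions in $s$ and continuous in $\omega$.
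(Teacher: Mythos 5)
Your proposal is correct and follows essentially the same route as the paper: both rest on splitting the neighbour sum at a vertex into the single neighbour toward $\omega$ (raising $h_\omega$ by $1$, producing $a\circ\sigma$) and the $q$ neighbours away from $\omega$ (producing $La$), which for $\Delta\Op(a)$ is exactly the paper's computation, while your treatment of $\Op(a)\Delta$ via $\widehat{\Delta u}(\omega,s)=\gamma(s)\hat u(\omega,s)$ is just a transform-side repackaging of the paper's kernel computation yielding the same multiplier $\frac{q^{1/2}}{q+1}(q^{is}+q^{-is})$. Your explicit remark that multiplication by $q^{\pm is}$ preserves the symbol conditions (Leibniz plus vanishing of all $s$-derivatives of $a$ at $s=0,\tau$) is a point the paper leaves implicit, and is a welcome addition.
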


\begin{proof}
Let us first compute the symbol of $\Delta \Op(a)$. Recall that
\begin{align*}
 \Op(a)u(x) &= \sum_{y \in \T} \int_\Omega \int_0^\tau q^{(\frac{1}{2} + is)( h_\omega(y) - h_\omega(x))} a(x,\omega,s) u(y) d\nu_x(\omega)d\mu(s) \\
 &= \sum_{y \in \T} \int_\Omega \int_0^\tau q^{(\frac{1}{2} - is)( h_\omega(x) - h_\omega(y))} a(x,\omega,s) u(y) d\nu_y(\omega)d\mu(s).
\end{align*}
We have
\[ \Delta \Op(a)(u)(x) = \sum_{y \in \T} \int_\Omega \int_0^\tau \Delta \left( q^{(\frac{1}{2} - is)( h_\omega(\cdot) - h_\omega(y))} a(\cdot,\omega,s)\right)(x) u(y) d\nu_y(\omega)d\mu(s).  \]
Fix $\omega$ and $s$, and let $g(x) = q^{(\frac{1}{2}-is)(h_\omega (x) - h_\omega(y))} a(x,\omega,s)$. We split $\Delta g(x) = \frac{1}{q+1}\sum_{d(x,y)=1} g(y)$ into two parts depending on the direction of the shift:
\begin{align*}
 \Delta g(x) 
 &= q^{(\frac{1}{2}-is)(h_\omega(\sigma_\omega(x)) - h_\omega(y))} a\circ\sigma(x,\omega,s) \\
	&\quad + \frac1q \sum_{z:\sigma_\omega(z)=x} q^{(\frac{1}{2}-is)(h_\omega(z) - h_\omega(y))} a(z,\omega,s).
\end{align*}
where $\sigma_\omega$ is as defined in \ref{shift operator}. Then we can use the fact that $h_\omega(\sigma_\omega(x)) = h_\omega(x)+1$ to write
\begin{align*}
q^{(\frac{1}{2}-is)(h_\omega(\sigma_\omega(x)) - h_\omega(y))} a\circ\sigma(x,\omega,s)
= q^{(\frac{1}{2}-is)} q^{(\frac{1}{2}-is)(h_\omega(x) - h_\omega(y))} a\circ\sigma(x,\omega,s).
\end{align*}
We use the fact that $h_\omega(z) = h_\omega(x)-1$ when $\sigma_\omega(z) = x$ in the second part of the sum to obtain
\[\sum_{z:\sigma_\omega(z)=x} q^{(\frac{1}{2}-is)(h_\omega(z) - h_\omega(y))} a(z,\omega,s)
= q^{(\frac{1}{2}+is)} \frac1q \sum_{z:\sigma_\omega(z)=x} q^{(\frac{1}{2}-is)(h_\omega(x) - h_\omega(y))} a(z,\omega,s)\]
The symbol of $\Delta\Op(a)$ is thus given by
\[ \frac{1}{q+1}\left(q^{(\frac{1}{2}-is)}a\circ\sigma(x,\omega,s) + q^{(\frac{1}{2}+is)} \frac1q \sum_{z:\sigma_\omega(z)=x} a(z,\omega,s)\right) \]
\paragraph*{}
Let us now compute the symbol of $\Op(a)\Delta$. The kernel of $\Op(a)\Delta$ is given by
\[ K(x,y) = \frac{1}{q+1} \sum_{z:d(z,y)=1} k_a(x,z), \]
where $k_a$ is the kernel of $\Op(a)$.
\begin{align*}
 K(x,y) &= \frac{1}{q+1} \sum_{z:d(z,y)=1} \int_\Omega \int_0^\tau q^{(\frac{1}{2} + is)( h_\omega(z) - h_\omega(x))} a(x,\omega,s) u(y) d\nu_x(\omega)d\mu(s) \\
 &= \frac{1}{q+1} \sum_{z:d(z,y)=1} q^{(\frac{1}{2} + is)( h_\omega(z) - h_\omega(y))} \int_\Omega \int_0^\tau q^{(\frac{1}{2} + is)( h_\omega(y) - h_\omega(x))} a(x,\omega,s) u(y) d\nu_x(\omega)d\mu(s) \\
 &= \frac{1}{q+1} \left( q^{(\frac{1}{2} + is)} + q^{(\frac{1}{2} - is)} \right) \int_\Omega \int_0^\tau q^{(\frac{1}{2} + is)( h_\omega(y) - h_\omega(x))} a(x,\omega,s) u(y) d\nu_x(\omega)d\mu(s).
\end{align*}
Thus by the inversion formula the symbol of $\Op(a)\Delta$ is equal to
\[ \frac{1}{q+1}\left( q^{(\frac{1}{2}+is)}+q^{(\frac{1}{2}-is)} \right)a(x,\omega,s), \] 
and by substracting this from the symbol of $\Delta\Op(a)$ we obtain the symbol of the commutator.
\end{proof}

\bibliographystyle{alpha}
\bibliography{pseudo}

\def\cprime{$'$}
\begin{thebibliography}{KMW03}

\bibitem[BKS07]{BKS07}
G.~Berkolaiko, J.~P. Keating, and U.~Smilansky.
\newblock Quantum ergodicity for graphs related to interval maps.
\newblock {\em Comm. Math. Phys.}, 273(1):137--159, 2007.

\bibitem[BKW04]{BKW04}
G.~Berkolaiko, J.~P. Keating, and B.~Winn.
\newblock No quantum ergodicity for star graphs.
\newblock {\em Comm. Math. Phys.}, 250(2):259--285, 2004.

\bibitem[BL12]{BL}
S.~Brooks and E.~Lindenstrauss.
\newblock Non-localization of eigenfunctions on large regular graphs.
\newblock {\em Israel Journal of Mathematics}, pages 1--14, 2012.

\bibitem[CdV85]{CdV85}
Y.~Colin~de Verdi{\`e}re.
\newblock Ergodicit\'e et fonctions propres du laplacien.
\newblock {\em Comm. Math. Phys.}, 102(3):497--502, 1985.

\bibitem[CMS98]{CMS98}
M.~Cowling, S.~Meda, and A.~G. Setti.
\newblock An overview of harmonic analysis on the group of isometries of a
  homogeneous tree.
\newblock {\em Exposition. Math.}, 16(5):385--423, 1998.

\bibitem[CS99]{CS99}
M.~Cowling and A.~G. Setti.
\newblock The range of the {H}elgason-{F}ourier transformation on homogeneous
  trees.
\newblock {\em Bull. Austral. Math. Soc.}, 59(2):237--246, 1999.

\bibitem[FTN91]{FTN91}
A.~Fig{\`a}-Talamanca and C.~Nebbia.
\newblock {\em Harmonic analysis and representation theory for groups acting on
  homogeneous trees}, volume 162 of {\em London Mathematical Society Lecture
  Note Series}.
\newblock Cambridge University Press, Cambridge, 1991.

\bibitem[KMW03]{KMW03}
J.~P. Keating, J.~Marklof, and B.~Winn.
\newblock Value distribution of the eigenfunctions and spectral determinants of
  quantum star graphs.
\newblock {\em Comm. Math. Phys.}, 241(2-3):421--452, 2003.

\bibitem[KS97]{KS97}
T.~Kottos and U.~Smilansky.
\newblock Quantum chaos on graphs.
\newblock {\em Phys. Rev. Lett.}, 79:4794--4797, Dec 1997.

\bibitem[KS99]{KS99}
T.~Kottos and U.~Smilansky.
\newblock Periodic orbit theory and spectral statistics for quantum graphs.
\newblock {\em Annals of Physics}, 274(1):76 -- 124, 1999.

\bibitem[Smi07]{Smi07}
U.~Smilansky.
\newblock Quantum chaos on discrete graphs.
\newblock {\em J. Phys. A}, 40(27):F621--F630, 2007.

\bibitem[Smi10]{Smi10}
U.~Smilansky.
\newblock Discrete graphs - a paradigm model for quantum chaos.
\newblock In {\em S\'eminaire Poincar\'e XIV}, pages 89--114, 2010.

\bibitem[{\v{S}}ni74]{Sni74}
A.~I. {\v{S}}nirel{\cprime}man.
\newblock Ergodic properties of eigenfunctions.
\newblock {\em Uspehi Mat. Nauk}, 29(6(180)):181--182, 1974.

\bibitem[Zel86]{Zel86}
S.~Zelditch.
\newblock Pseudodifferential analysis on hyperbolic surfaces.
\newblock {\em J. Funct. Anal.}, 68(1):72--105, 1986.

\bibitem[Zel87]{Zel87}
S.~Zelditch.
\newblock Uniform distribution of eigenfunctions on compact hyperbolic
  surfaces.
\newblock {\em Duke Math. J.}, 55(4):919--941, 1987.

\end{thebibliography}

\end{document}